\theoremstyle{plain}
\theoremstyle{plain}
\crefname{assumption}{Assumption}{Assumptions}
\newtheorem{theorem}{Theorem}[section]
\newtheorem{lemma}[theorem]{Lemma}
\newtheorem{proposition}[theorem]{Proposition}
\newtheorem{remark}[theorem]{Remark}
\newtheorem{definition}{Definition}
\newcommand{\argmin}[1]{\underset{#1}
{\operatorname{arg}\!\operatorname{min}}\;}
\newcommand{\E}{\mathbb{E}}
\newcommand{\R}{\mathbb{R}}
\renewcommand{\P}{\mathbb{P}}
\newcommand{\thetaS}{\theta_{0,\supp}}
\newcommand{\thetaSc}{\theta_{0,\supp^C}}
\newcommand{\nlip}[1]{\| #1 \|_{\text{Lip}}}
\newcommand{\bt}{b_{\theta}}
\newcommand{\bto}{b_{\theta_0}}
\newcommand{\nop}[1]{\| #1 \|_{\text{op}}}
\newcommand{\alasso}{\widehat{\theta}}
\newcommand{\sign}{\text{sign}}
\newcommand{\M}[2]{\mathcal{M}_{#1 \times #2}(\R)}
\newcommand{\supp}{\mathcal{S}}
\title{Consistent support recovery for high-dimensional diffusions}
\author{Dmytro Marushkevych \thanks{School of Mathematics and Statistics, UNSW Sydney, 4111 Anita B. Lawrence Centre, Sydney NSW 2052, Australia, 
Email: d.marushkevych@unsw.edu.au}\and  Francisco Pina \thanks{Department of Mathematics, University of Luxembourg, Maison du Nombre, 6 Avenue de la Fonte, 4364 Esch-sur-Alzette,
Luxembourg, Email: francisco.pina@uni.lu} \and 
Mark Podolskij\thanks{Department of Mathematics, University of Luxembourg, Maison du Nombre, 6 Avenue de la Fonte, 4364 Esch-sur-Alzette,
Luxembourg, Email: mark.podolskij@uni.lu}
}
\numberwithin{equation}{section}
\date{\today}
\begin{document}
\maketitle

\begin{abstract}
Statistical inference for stochastic processes has advanced significantly due to applications in diverse fields, but challenges remain in high-dimensional settings where parameters are allowed to grow with the sample size. This paper analyzes a $d$-dimensional ergodic diffusion process under sparsity constraints, focusing on the adaptive Lasso estimator, which improves variable selection and bias over the standard Lasso.
We derive conditions under which the adaptive Lasso achieves support recovery property and asymptotic normality for the drift parameter, with a focus on linear models. Explicit parameter relationships guide tuning for optimal performance, and a marginal estimator is proposed for $p\gg d$ scenarios under partial orthogonality assumption. Numerical studies confirm the adaptive Lasso’s superiority over standard Lasso and MLE in accuracy and support recovery, providing robust solutions for high-dimensional stochastic processes.  \\

\noindent
 \textit{Keywords:} Adaptive Lasso, concentration inequality, diffusion models, high-dimensional statistics, support recovery.  \\ 
 
\noindent
\textit{AMS 2010 subject classifications:} Primary 62M05, 60G15; secondary: 62H12.

\end{abstract}

\tableofcontents

\section{Introduction}
Over the past decades, statistical inference for stochastic processes has garnered increasing attention, driven by their extensive applications across diverse scientific fields. In particular, stochastic differential equations (SDEs) have proven fundamental in disciplines such as biology \cite{Ric77}, epidemiology \cite{Bai57}, physics \cite{Pap95}, economics \cite{Ast21}, neurology \cite{Hol76}, and mathematical finance \cite{Hul00}. This wide applicability has spurred significant advancements in both parametric and non-parametric inference methods under various frameworks.

Simultaneously, the growing importance of high-dimensional data has introduced new complexities to statistical modeling. Researchers have explored scenarios where the number of model parameters far exceeds the available observations or where most parameters exhibit specific asymptotic behavior, departing from the classical approach that assumes only the number of observations grows asymptotically. While substantial progress has been made in understanding high-dimensional frameworks for simpler models \cite{Sara11, Huang08_2, Can19, Gen2024}, the study of high-dimensional stochastic processes remains relatively scarce.

Existing work on high-dimensional diffusions has predominantly focused on particle interaction systems within mean field theory, with notable parametric and non-parametric results explored in \cite{Amo24,Amo23, BPZ24,Bel24, Bis11, Che21, Del23,  Gen21, Gen2024, Liu22, Sha23}, among others. However, most studies have restricted parameter spaces to finite dimensions, leaving the case of infinite-dimensional parameter spaces underexplored. Expanding both theoretical and methodological knowledge at the intersection of high-dimensional frameworks and stochastic processes has thus become a topic of significant scientific interest.

In this article, we examine the statistical analysis of a 
$d$-dimensional ergodic diffusion process, defined as the solution to the stochastic differential equation \begin{equation} \label{model} 
dX_t = -\bt(X_t)dt + dW_t, \hspace{2cm} t \geq 0, 
\end{equation} 
on a filtered probability space $(\Omega, \mathcal{F}, (\mathcal{F}_t)_{t \geq 0}, \P)$. 
Here $\theta \in \Theta \subseteq \R^p$ is  the model parameter and $\Theta$ is an open set. Furthermore, $\bt:\R^d \rightarrow \R^d$ is the drift function of the process and $W = (W_t)_{t \geq 0 }$  denotes a standard $d$-dimensional Brownian motion, and the initial value $X_{0}$ is a random vector independent of $W$.
We consider the scenario where the entire trajectory of the process $(X_t)_{t  \in [0,T]}$ is observed up to time $T$, allowing for asymptotically large values of $p, d $ and $T$.

In high-dimensional statistics, particularly in parametric inference for large-scale systems, a popular approach assumes sparsity in the parameters. Following this perspective, we impose a sparsity constraint on the true parameter $\theta_0$ of the model \eqref{model}:
\begin{equation}\label{sparsityConstraint} 
\|\theta_0\|_0 := \text{card}\left(\left\{ 1 \leq i \leq p |~ \theta_{0}^i \neq 0 \right\}\right) = s. 
\end{equation} 
Under this assumption, regularization methods have emerged as a key tool to improve classical estimators' performance. A penalized estimator of $\theta_0$
is generally defined as 
\begin{equation} 
\widehat{\theta}(\lambda) = \argmin{\theta \in \Theta} \{ L(\theta) + c(\theta , \lambda) \}, 
\end{equation}
where 
$L$ represents a loss function based on the observed data, and $c$
is a penalty function depending on $\theta$ and  a tuning parameter $\lambda$.
The Lasso estimator, where $c(\theta,\lambda)=\lambda \|\theta\|_1$, has been extensively studied for high-dimensional diffusion processes. Key results include 
$l_2$-error bounds and convergence rates under structural assumptions on the diffusion \cite{Pod22,Tro23}, estimation of interaction matrices in Ornstein-Uhlenbeck processes \cite{Cio20}, and extensions to L\'evy-driven processes \cite{Dext22}. Recent studies, such as \cite{pina24}, also address the impact of discrete sampling on the Lasso estimator's performance.

Despite its advantages, the Lasso estimator lacks the oracle property, meaning it cannot consistently select non-zero coefficients or achieve asymptotic normality for non-zero parameters \cite{Fan01, Fan04}. To address these limitations, various estimators have been proposed. For linear regression, the bridge estimator \cite{Huang08} and SCAD penalty \cite{Fan97} have gained popularity for their oracle properties, though their non-convexity presents computational challenges. The adaptive Lasso, introduced in \cite{Zou06}, offers a convex alternative that improves variable selection and reduces estimation bias by applying differential penalties to zero and non-zero coefficients. Studies such as \cite{Huang08_2} have demonstrated the adaptive Lasso's ability to achieve variable selection and asymptotic normality under specific conditions.

Research on the adaptive Lasso in the context of diffusion processes remains relatively sparse. While prior studies, such as \cite{De12} and \cite{Gai19}, investigate variable selection within multidimensional frameworks, a thorough analysis of its variable selection properties in asymptotic settings is still lacking. This study seeks to address this gap by extending existing results and identifying the conditions under which the adaptive Lasso achieves both support recovery and asymptotic normality in estimating the drift parameter of the diffusion process defined in \eqref{model}.

This focus is particularly relevant in light of recent advancements in the field of graphical models for stochastic processes \cite{DDK24, LKS24, MH20, MH22}, where variable selection plays a pivotal role in uncovering the causality structure of the underlying model. To pursue this goal, we concentrate on linear models where the drift function of \eqref{model} assumes the form specified in \eqref{linearModel}. While this choice enhances the clarity of our analysis, the methods and results presented here can be extended to accommodate more complex drift functions, leveraging approaches outlined in works such as \cite{Pod22} and \cite{Tro23}.

Our main contributions, Theorems \ref{constheorem} and \ref{theoremAsNor}, show that under Assumption \hyperref[assumption:B3]{\((\mathcal{B}_3)\)} and condition \eqref{conditionANormality}, the Lasso estimator achieves sign consistency and asymptotic normality. These results explicitly characterize the relationships between model parameters and their asymptotic behavior, providing guidance for selecting tuning parameters and optimizing performance. Key insights are detailed in Remarks \ref{r: ConclusionParameters} and \ref{r: CombinationSC-AN}. For cases where $p\gg d$,
 we propose a marginal estimator (Section \ref{ss: partialOrtho}) that satisfies partial orthogonality assumptions, offering a robust alternative to classical methods in high-dimensional settings. Finally, numerical studies validate our theoretical findings, demonstrating that the adaptive Lasso outperforms standard Lasso and MLE in accuracy and support recovery.


The paper is organised as follows. In Sections \ref{ss: notation} and \ref{ss: ass}, we introduce the notation, the model, and its primary assumptions. Section \ref{s: main} presents the central results, with a focus on Theorem \ref{constheorem} and Theorem \ref{theoremAsNor}, which establish the adaptive Lasso’s ability to achieve consistent support recovery and asymptotic normality, respectively.
Section \ref{s: preestimators} examines various pre-estimator options tailored to different scenarios involving distinct relationships between the model parameters. In Section \ref{s: conineqsection}, we provide a brief overview of the concentration inequalities applicable depending on the properties of the diffusion process.
Section \ref{s: numericalStudies} validates the theoretical findings through numerical simulations, while Section \ref{s: proofs} contains the main auxiliary results and the detailed proofs of the paper.

\subsection{Notation}{\label{ss: notation}}
Before presenting our results, we first provide a concise overview of the notation used throughout this paper. All vectors are treated as column vectors. For a vector $x \in \R^d$, we denote its $i$-th component by $x^i$ and express its $l_q$-norm for $q \in [1,\infty]$ as:
\begin{align}
    \|x\|_q := \left( \sum_{i=1}^d |x^i|^q\right)^{\frac{1}{q}} \text{for   } q \in [1,\infty),\qquad
    \|x\|_\infty := \max_{1 \leq i \leq d} |x^i|.
\end{align}
The canonical basis vector of $\R^p$ is denoted by $e_i$ for $i = 1, \dots, p$.
Given $x, y \in \R^d$, $\langle x, y \rangle$ represents the scalar product. For any vector $\theta \in \R^p$, its sign vector is denoted by sign($\theta$) $\in \R^p$, defined as sign($\theta$) = ($\sign(\theta^1), \sign(\theta^2), \dots, \sign(\theta^p)$)$^{\star}$, where $\sign(\theta^i) = 1$ if $\theta^i > 0$, $\sign(\theta^i) = -1$ if $\theta^i < 0$, and $\sign(\theta^i) = 0$ if $\theta^i = 0$.

For a set $\mathcal{S} \subset {1, \dots, d}$ and a vector $x \in \R^d$, we denote $x_{\mathcal{S}} \in \R^{\text{card}(\mathcal{S})}$, which contains only the components $x^j$ for $j \in \mathcal{S}$; $\supp^C$ denotes the complement of $\supp$.
Given a matrix $M \in \M{a}{b}$, $M^{\star}$ denotes its transpose, $M_j$ the $j$-th row, and $M^i$ the $i$-th column. For a subset $\mathcal{G} \subset {1, \dots, b}$, $M^{\mathcal{G}}$ is the $a \times |\mathcal{G}|$ submatrix of $M$, consisting of the column vectors $M^j$ for $j \in \mathcal{G}$. The minimum and maximum eigenvalues of $M$ are denoted by $l_{\min}(M)$ and $l_{\max}(M)$, respectively, and the operator norm of $M$ is expressed as $\nop{M}$.

For a function $f: \Theta \times \R^d \rightarrow \R^d$, $\dot{f}$ denotes the derivative of $f$ with respect to the parameter $\theta \in \Theta$. A function $f: \R^d \rightarrow \R^d$ is said to have polynomial growth if $\| f(x) \|_2 \leq A(1 + \|x\|_2^q)$ for some $q, A > 0$. In particular, if $q = 1$, the function is said to have linear growth. For a Lipschitz continuous function $f$, its Lipschitz norm is denoted by $\nlip{f}$.


\subsection{Model under consideration and initial assumptions}{\label{ss: ass}}


Suppose that the observations $(X_t)_{t\in[0,T]}$ were generated by a linear model, i.e., the stochastic differential equation \eqref{model} with the drift function $\bt$ given by
\begin{equation}\label{linearModel} 
\bt = \phi_0 + \sum_{j=1}^{p} \theta_0^j \phi_j, 
\end{equation}
where $(\phi_j)_{j\geq 0}$ is a given alphabet of  mappings from $\R^d$ to $\R^d$, and $\theta_0 = (\theta_0^1,\dots, \theta_0^p) \in \R^p$ is an unknown parameter to be estimated, satisfying the sparsity constraint \eqref{sparsityConstraint}. For simplicity, we assume $\supp = {1, ..., s}$, i.e., $\theta_0 = (\thetaS^{\star}, \thetaSc^{\star})^{\star}$, where $\thetaS$ is an $s \times 1$ vector of nonzero coordinates, and $\thetaSc$ is a $(p-s) \times 1$ vector of zeros.
Moreover, we denote by $\theta_{0,\supp}^{\text{min}}$ and $\theta_{0,\supp}^{\text{max}}$ the minimum and maximum of the absolute values of $\thetaS$, respectively. We define $\Phi(x): = (\phi_1(x),\dots,\phi_p(x)) \in \R^{d \times p}$, where the $j$-th column corresponds to the function $\phi_j$ considered in \eqref{linearModel}.

Throughout this paper, we consider the following assumptions related to the nature of the process: \\ 

\noindent
\textbf{Assumption ($\mathcal{A}_1$):} The function $b_{\theta}(\cdot)$ is globally Lipschitz, and there exists a constant $M$ such that
\begin{equation}
\left\langle b_{\theta}(x),x \right\rangle \geq M \|x\|_2^{2}.
\end{equation}\label{assumption:A1}

\noindent 
Assumption~\hyperref[assumption:A1]{\((\mathcal{A}_1\))} ensures that equation \eqref{model} has a unique strong solution $X$, which constitutes an ergodic homogeneous continuous Markov process with an invariant distribution (see \cite{Rogers_Williams_2000}, Theorem 12.1). This solution possesses bounded moments of all orders. We further assume that the process and its properties satisfy the necessary conditions for the stability and consistency of the theoretical framework developed in this study.

We make the additional assumption: \\

\noindent
\textbf{Assumption ($\mathcal{A}_2$):} $X_0$ follows the invariant distribution. \label{assumption:A2} \\

\noindent
This assumption ensures that $X$ is the strictly stationary solution of equation \eqref{model}. From the observations $(X_t)_{t \in [0,T]}$, we can deduce that the scaled negative log-likelihood function is expressed as

\begin{equation}\label{LT} 
L_T(\theta) = \frac{1}{T}\int_0^T \bt(X_t)^{\star}dX_t + \frac{1}{2T}\int_0^T \bt(X_t)^{\star}\bt(X_t) dt. \end{equation}
Additionally, we define the empirical covariance matrix:
\begin{equation}\label{CT} 
C_T: = \frac{1}{T}\int_0^T \Phi(X_t)^{\star}\Phi(X_t) dt, 
\end{equation}
which serves as the analogue of the Gram matrix in linear regression models. This matrix satisfies:
\begin{equation}\label{Cinfty} 
C_{\infty}:=\E[C_T]  = \E[\Phi(X_0)^{\star}\Phi(X_0)]. 
\end{equation}
Denoting $\supp = $supp($\theta_0$), we introduce the submatrices:
\begin{equation} 
C_T^{\supp \supp}: = \frac{1}{T}\int_0^T {\Phi^{\supp}(X_t)}^{\star}\Phi^{\supp}(X_t) dt \hspace{0.3cm} \text{and} \hspace{0.3cm} C_T^{\supp^C \supp}: = \frac{1}{T}\int_0^T {\Phi^{\supp^C}(X_t)}^{\star}\Phi^{\supp}(X_t) dt, \end{equation}
along with their means $C_\infty^{\supp \supp}: = \E[C_T^{\supp \supp}]$ and $C_\infty^{\supp^C \supp}: = \E[C_T^{\supp^C \supp}]$.

The quantity $C_T^{\supp \supp}$, referred to as the active block, represents the covariance of the relevant functions $\phi_j$ in \eqref{linearModel} that correspond to the non-zero coefficients $\theta_0^j$. Conversely, $C_T^{\supp^C \supp}$ captures the correlation between the relevant and irrelevant functions $\phi_j$ in the model.
To quantify these terms, we define the following quantities:
\begin{align}
    \mathfrak{L} := \max_{ 1 \leq j \leq p-s} \left\|(C_{\infty}^{\supp^C\supp})_j\right\|_2, \qquad
    \mathfrak{M}:=\max_{1\leq j \leq p} \left|C_{\infty}^{jj}\right|,
\end{align}
with their roles discussed in Remark \ref{r: quantitiesModel}. Another critical component in the analysis is the martingale term:
\begin{equation} 
\epsilon_T := \frac{1}{T}\int_0^T \Phi(X_t)^{\star}dW_t , 
\end{equation}
which is intrinsically linked to the stochastic nature of the process.

We further impose the following assumption: \\

\noindent 
\textbf{Assumption ($\mathcal{A}_3$):} We assume that
\begin{equation}
    I^{\supp}: = \E\left[\Phi^{\supp}(X_0)^{\star}\Phi^{\supp}(X_0)\right],
\end{equation}\label{assumption:A3}
 is positive definite, and we denote by $\tau_{\min} >0$ its minimum eigenvalue.

\begin{remark}\label{remarkHD} \rm 
Assumption~\hyperref[assumption:A3]{\((\mathcal{A}_3\))} can be regarded as an analogue of the identifiability condition for parameters in classical settings. Similar assumptions have been examined in \cite{Pod22, Gai19}, and \cite{Str21}, where a more stringent requirement was imposed by assuming that the covariance matrix of the process is positive definite $\P$-a.s. However, our approach significantly relaxes this condition. Instead of demanding positive definiteness for the entire Fisher information matrix $I$, we restrict this requirement to the structure directly associated with the support of $\theta_0$. This nuanced adjustment allows us to address processes with degenerate Fisher information structures, thereby encompassing a broader range of diffusion processes with complex characteristics.

This relaxation, which imposes the positive definiteness condition only on $I^{\supp}$ while permitting $I$ to be degenerate, is particularly relevant in high-dimensional statistical contexts. By loosening the assumption on $I$, we eliminate the need to assume $p \leq d$. This flexibility enables our results to apply to a more general framework of high-dimensional inference, where the parameter dimension can exceed the process dimension. \qed
\end{remark}

\begin{remark}\rm \label{r: quantitiesModel}
The quantity $\mathfrak{L}$ serves as an analogue of the normalization constant in regression settings. Specifically, it ensures that $\theta_0$ is selected so that the rows of $C_T$ are standardized with respect to the $\ell_2$ norm.

In contrast, $\mathfrak{M}$ represents the maximum of the expected autocorrelation among the model variables. The main purpose of defining these quantities is to explicitly track their behavior, as $\mathfrak{L}$ and $\mathfrak{M}$ may depend on critical model parameters such as $p$, $d$, $s$, or $T$. By doing so, we effectively capture how these parameters influence the results. Consequently, quantities like $\mathfrak{L}$ and $\mathfrak{M}$ (along with others, such as $\tau_{\min}$ or $\mathcal{K}$) are explicitly integrated and monitored throughout the analysis. \qed
\end{remark}

\section{Main results}{\label{s: main}}

The primary goal of this paper is to present statistical results concerning the performance of the adaptive Lasso estimator for the drift function, particularly in the task of support recovery. The adaptive Lasso estimator of $\theta_0$ is defined as
\begin{equation}\label{definitiAdaptiveLasso}
    \alasso := \argmin{\theta \in \Theta} \{ L_T(\theta) + \lambda\| w^{\star}\theta \|_1\},
\end{equation} 
where $w_j = |\widetilde{\theta}^j|^{-1}$ is a $p$-dimensional weight vector derived from the pre-estimator $\widetilde{\theta}$. A common approach to demonstrate that an estimator can recover the support of the true parameter, especially in high-dimensional settings, is to show that the estimator is sign consistent with the parameter.

\begin{definition}\label{definitionSignConsistency} (Sign consistency) 
We say that two vectors, $\theta$ and $\widehat{\theta}$, both in $\R^p$, are sign consistent, denoted as $\theta =_s \widehat{\theta}$, if sign$(\theta) = $sign$(\widehat{\theta})$. 
\end{definition}

\noindent
Therefore, the central objective of this article is to establish conditions, together with
Assumptions~\hyperref[assumption:A1]{\((\mathcal{A}_1)\)}-\hyperref[assumption:A3]{\((\mathcal{A}_3)\)},  that guarantee the sign consistency of the adaptive Lasso estimator $\alasso$ for the parameter $\theta_0$.

We consider the following assumptions: \\


\noindent \textbf{Assumption ($\mathcal{B}_1$):}\label{assumption:B1} The initial estimator $\widetilde{\theta}$ is $r_T$-consistent for the estimation of certain unknown vector of constants $\eta_0$ depending on $\theta_0$, i.e. 

\begin{equation}\label{rTcons}
    r_T \|\widetilde{\theta} - \eta_0\|_{\infty} = O_{\mathbb{P}}(1),\quad T\to\infty, 
\end{equation}
and for some constants $\mathcal{M}_{1,T}$ and $\mathcal{M}_{2,T}$ the vector $\eta_0$ satisfies 

\begin{equation}
    \max_{j \notin \supp} |\eta_0^j| \leq \mathcal{M}_{2,T} \quad \text{and} \quad \left( \sum_{j\in \supp}\left( \frac{1}{|\eta_0^j|} + \frac{\mathcal{M}_{2,T}}{|\eta_0^j|^2} \right)^2 \right)^{\frac{1}{2}} \leq \mathcal{M}_{1,T} = o(r_T), \quad T\to\infty. 
\end{equation}

\smallbreak
\noindent \textbf{Assumption ($\mathcal{B}_2$):}\label{assumption:B2} For $u=(|\eta_0^j|^{-1}\sign(\theta_0^j), j \in \supp)^{\star}$ the adaptive irrepresentable condition holds for some $\kappa<1$, i.e. 

\begin{equation}\label{irrepresent}
\left| \left( C_\infty^{\supp^C \supp} (C_\infty^{\supp\supp })^{-1} u \right) ^{j-s} \right|\leq \frac{\kappa}{|\eta_0^j|}, \quad j\notin\supp. 
\end{equation}

\smallbreak
\noindent \textbf{Assumption ($\mathcal{B}_3)$:}\label{assumption:B3} The parameters of the model satisfy $\mathfrak{L}/\tau_\text{min}>\tau_0>0$, where $\tau_0$ is fixed and 


\begin{equation}
 \left( \left(\frac{1}{(\theta_{0,\supp}^\text{min})^2}+ \frac{ (\mathcal{M}_{2,T} + \frac{1}{r_T})^2(\frac{\tau_{\text{min}}}{\sqrt{s}}+2\mathfrak{L})^2}{\lambda^2}\right)(\sqrt{\mathcal{K}}+\mathfrak{M})+ \mathcal{K}s  \right) \frac{s\log{p}}{\tau_{\text{min}}^2 T} \to 0
\end{equation}
and
\begin{equation}
\frac{\log{p}}{T}+\left( \frac{1}{s} + \frac{\mathfrak{L^2}}{\tau_\text{min}^2} \right) \frac{\mathcal{K}s^2\log{p}}{\tau_{\text{min}}^2 T}\mathcal{M}_{1,T}^2+\frac{\lambda\mathcal{M}_{1,T}}{\theta_{0,\supp}^\text{min}\tau_{\text{min}}}\to 0.
\end{equation}




\smallbreak
\noindent \textbf{Assumption ($\mathcal{C}$):}\label{assumption:C}  For the random functional
\begin{equation}\label{Hvdef}
    H_v(X_{[0,T]}) := v^{\star} \left( \frac{1}{T}\int_0^T \left(\Phi(X_t)^{\star}\Phi(X_t) - \E_{\theta_0}\left[ \Phi(X_t)^{\star}\Phi(X_t) \right] dt \right) \right)v, 
\end{equation}
there exists a constant $\mathcal{K}$ such that that for all $\mu \in \R$  and for all $v\in \R^p$ satisfying $\|v\|_2 = 1$
    \begin{equation}\label{conineq}
        \E\left[\exp\left(\mu H_v(X_{[0,T]}) \right)\right] \leq \exp\left(\mathcal{K} \mu^2 /T\right).
    \end{equation}

\smallbreak

\begin{remark}\label{conditions} \rm 
The assumptions outlined above build upon those introduced in \cite{Zou06} for the linear regression setting, but with key modifications tailored to the framework of diffusion processes.
In linear regression, the Gram matrix is pivotal in characterizing the dependence structure of the covariates. In contrast, within our diffusion process framework, this role is assumed by the theoretical Fisher information matrix $C_\infty$, 
which encapsulates parameter information while accounting for the stochastic and temporal dependencies intrinsic to diffusion processes. This substitution is fundamental, reflecting the transition from static regression models to the dynamic nature of diffusion models.

Additionally, our methodology tracks a more extensive set of parameters, allowing us to establish sharper bounds on the maximal number of non-zero elements in the unknown parameter vector $\theta_0$.
This enhancement is particularly significant in high-dimensional settings, where managing the interplay between sparsity $s$, dimensionality 
$p$, and other model characteristics is crucial to achieving both robust theoretical results and practical feasibility.
These extensions not only generalize the assumptions in \cite{Zou06} but also adapt them to address the unique challenges of high-dimensional diffusion models, ensuring their relevance and applicability in this more complex context.
\qed
\end{remark}

\begin{remark}\label{remarkAdIC} \rm 
Assumption \hyperref[assumption:B1]{\((\mathcal{B}_1)\)}  ensures that the initial estimator $\widetilde{\theta}$ provides a reliable approximation of a proxy $\eta_0$ for the true parameter $\theta_0$. The adaptive irrepresentable condition \hyperref[assumption:B2]{\((\mathcal{B}_2)\)} further defines the relationship between \(\eta_0\) and \(\theta_0\) by establishing a balance between the elements of \(\eta_0\) within the support \(\supp\) of \(\theta_0\) and those outside it. This balance is critical for guaranteeing the success of the Adaptive Lasso in identifying the correct support and accurately estimating the non-zero components of \(\theta_0\), since this condition ensures that the weights $w_j$ are properly scaled: they are neither excessively large for indices
$j \in \supp$ (the support of $\theta_0$) nor excessively small for $j \notin \supp$. 

In cases where $\sign{(\eta_0)} = \sign{(\theta_0)}$, meaning the initial estimator is sign consistent with a convergence rate $r_T$, the adaptive irrepresentable condition outlined in Assumption \hyperref[assumption:B2]{\((\mathcal{B}_2)\)} is automatically satisfied, and the term $\mathcal{M}_{2,T}$ in Assumption \hyperref[assumption:B1]{\((\mathcal{B}_1)\)} vanishes.
For further details and examples of potential initial estimators that fulfill these assumptions, please refer to Section \ref{s: preestimators}.
\qed
\end{remark}

\begin{remark} \rm 
Assumption  \hyperref[assumption:C]{\((\mathcal{C})\)}  is a standard condition in this context, serving to control the behavior of the noise. This restriction is crucial for deriving the required concentration inequalities. For further details, including examples that guarantee the validity of this assumption and guidance on selecting the constant $\mathcal{K}$ based on the specific model, we refer to Section 
\ref{s: conineqsection} and the references therein.\qed
\end{remark}

\begin{remark}\label{r: ConclusionParameters} \rm 


Assumption  \hyperref[assumption:B3]{\((\mathcal{B}_3)\)}  places constraints on the number of zero and nonzero coordinates in the parameter $\theta_0$, which depend on the choice of the tuning parameter $\lambda$ and the specific characteristics of the model. The number of permissible coordinates is further influenced by the tail behavior of the noise terms, as outlined in Assumption \hyperref[assumption:C]{\((\mathcal{C})\)}. Typically, the convergence rate of the pre-estimator is $r_T = O(\sqrt{d {T}/({s \log(p)})})$ (cf. Section \ref{sec3.1}), and the tuning parameter is selected as $\lambda = (dT)^{-a}$ for some $0 < a < 1$. With these settings, the total number of coordinates $p$ can grow as large as $\exp((dT)^{\delta})$ for some $0 < \delta < 1$. However, the number of nonzero coordinates is constrained to be of the order
\[
\min\left\{(dT)^{2a}, (dT)^{\frac{1-\delta}{2}}\right\},
\]
under the assumptions that $1 / \theta_{0,\supp}^\text{min} + \mathcal{K} + \mathfrak{M} = O(1)$, $
\mathcal{M}_{1,T} = O(\sqrt{s})$, and $\mathfrak{L} / \tau_\text{min} = O(1 / \sqrt{ds})$.

This implies that, while the total number of coordinates in $\theta_0$ can be very large, the number of nonzero coordinates must remain roughly proportional to the duration of the observation period $T$ and the dimension $d$ of the observed process.
\qed
\end{remark}

\noindent 
The next theorem demonstrates the sign consistency of the estimator $\widehat{\theta}$. 

\begin{theorem}\label{constheorem}
    Assume~\hyperref[assumption:A1]{\((\mathcal{A}_1)\)}-\hyperref[assumption:A3]{\((\mathcal{A}_3)\)},~\hyperref[assumption:B1]{\((\mathcal{B}_1)\)}-\hyperref[assumption:B3]{\((\mathcal{B}_3)\)} and \hyperref[assumption:C]{\((\mathcal{C})\)}  hold. Then we have that 
    \begin{equation}
        \P(\alasso =_s \theta_0) \rightarrow 1, \qquad \text{as } T\to\infty. 
    \end{equation}
\end{theorem}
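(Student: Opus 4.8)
The plan is to follow the classical Karush--Kuhn--Tucker / primal--dual witness strategy that underlies the sign-consistency proofs for the Lasso and adaptive Lasso (as in \cite{Zou06}), but carried out on the random Fisher information matrix $C_T$ rather than the deterministic Gram matrix, and with all the model quantities $\mathfrak{L}, \mathfrak{M}, \tau_{\min}, \mathcal{K}$ tracked explicitly so that Assumption~\hyperref[assumption:B3]{\((\mathcal{B}_3)\)} can be invoked at the end. First I would write down the first-order optimality (KKT) conditions for the convex program \eqref{definitiAdaptiveLasso}: since $L_T$ is quadratic in $\theta$ (because $\bt$ is linear in $\theta$ by \eqref{linearModel}), $\nabla L_T(\theta) = C_T\theta - (C_T\theta_0 + \epsilon_T - r_T^{\,\text{bias}})$ up to the known offset coming from $\phi_0$, so the stationarity condition reads $C_T(\alasso - \theta_0) - \epsilon_T + \lambda\, \diag(w)\,\sign(\alasso) = 0$ on the active set and an inequality $\|\cdot\|_\infty \le \lambda w_j$ off it.

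Next I would construct the \emph{oracle estimator} $\widehat\theta^{\,\text{orc}}$ supported on $\supp$, defined by solving the restricted stationarity equation $C_T^{\supp\supp}(\widehat\theta_\supp^{\,\text{orc}} - \thetaS) = \epsilon_T^{\supp} - \lambda\, \diag(w_\supp)\sign(\thetaS)$, which is well-posed on the event that $C_T^{\supp\supp}$ is invertible. The proof then reduces to verifying two events, each with probability tending to one:
\begin{itemize}
\item[(i)] \emph{Sign correctness on the support:} $\sign(\widehat\theta_\supp^{\,\text{orc}}) = \sign(\thetaS)$, which follows if $\|\widehat\theta_\supp^{\,\text{orc}} - \thetaS\|_\infty < \theta_{0,\supp}^{\min}$. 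Bounding $\|(C_T^{\supp\supp})^{-1}\|_{\text{op}}$ via a concentration argument (Assumption~\hyperref[assumption:C]{\((\mathcal{C})\)} applied to $H_v$ controls $C_T^{\supp\supp}$ around $C_\infty^{\supp\supp}$, whose least eigenvalue is $\ge \tau_{\min}$ by \hyperref[assumption:A3]{\((\mathcal{A}_3)\)}), controlling $\|\epsilon_T^{\supp}\|_\infty$ by the martingale concentration inequality, and controlling the weights $w_\supp$ through Assumption~\hyperref[assumption:B1]{\((\mathcal{B}_1)\)} (the $r_T$-consistency of $\widetilde\theta$ for $\eta_0$ and the lower bounds on $|\eta_0^j|$), gives exactly the first displayed limit in \hyperref[assumption:B3]{\((\mathcal{B}_3)\)}.
\item[(ii)] \emph{Strict dual feasibility off the support:} plugging $\widehat\theta^{\,\text{orc}}$ into the inactive stationarity inequality, one must show $\|C_T^{\supp^C\supp}(C_T^{\supp\supp})^{-1}(\lambda \diag(w_\supp)\sign(\thetaS) - \epsilon_T^\supp) + (\text{cross terms}) - \epsilon_T^{\supp^C}\|$ is strictly below $\lambda w_j$ coordinatewise. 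The deterministic heart of this is the adaptive irrepresentable condition \hyperref[assumption:B2]{\((\mathcal{B}_2)\)} with its slack $\kappa < 1$; the remaining terms are the stochastic fluctuations of $C_T^{\supp^C\supp}$ and $C_T^{\supp\supp}$ around their expectations (again \hyperref[assumption:C]{\((\mathcal{C})\)}, with $\mathfrak{L}$ entering through $\|C_\infty^{\supp^C\supp}\|$) plus the martingale term $\epsilon_T^{\supp^C}$, and requiring all of these to be absorbed by the remaining $\tfrac{1-\kappa}{|\eta_0^j|}\lambda$ slack yields precisely the second displayed limit in \hyperref[assumption:B3]{\((\mathcal{B}_3)\)}.
\end{itemize}
Finally, a standard convexity/uniqueness argument shows that on the intersection of (i) and (ii) the oracle solution $\widehat\theta^{\,\text{orc}}$ is \emph{the} minimizer $\alasso$ (the constructed point satisfies the full KKT system, and strict dual feasibility rules out any other active set), so $\alasso =_s \theta_0$ on an event of probability $\to 1$.

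I expect the main obstacle to be step (ii), the off-support control: one has to handle the product $C_T^{\supp^C\supp}(C_T^{\supp\supp})^{-1}$ of two correlated random matrices, decompose it as $C_\infty^{\supp^C\supp}(C_\infty^{\supp\supp})^{-1}$ plus error, and show the error is small in the $\ell_\infty \to \ell_\infty$ sense uniformly over the $p-s$ inactive coordinates --- this is where the $\sqrt{s\log p}$ and the $\mathfrak{L}/\tau_{\min}$ factors in \hyperref[assumption:B3]{\((\mathcal{B}_3)\)} come from, via a union bound over the $p$ rows combined with the sub-exponential control of $H_v$ from \hyperref[assumption:C]{\((\mathcal{C})\)}. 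A secondary subtlety is that the weights $w_j = |\widetilde\theta^j|^{-1}$ are themselves random and, for $j \notin \supp$, potentially large; controlling them requires carefully combining the $r_T$-rate from \hyperref[assumption:B1]{\((\mathcal{B}_1)\)} with the structural bounds on $\eta_0$, and this is exactly what the $\mathcal{M}_{1,T}$ and $\mathcal{M}_{2,T}$ bookkeeping in \hyperref[assumption:B1]{\((\mathcal{B}_1)\)} and \hyperref[assumption:B3]{\((\mathcal{B}_3)\)} is designed to make tractable.
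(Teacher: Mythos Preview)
Your proposal is correct and follows essentially the same primal--dual witness/KKT route as the paper: the paper too derives the stationarity condition $C_T(\alasso-\theta_0)+\epsilon_T=-\lambda\,\diag(w)\beta(\alasso)$, constructs the restricted solution $\alasso_\supp=\thetaS-(C_T^{\supp\supp})^{-1}(\lambda\widetilde u+\epsilon_{T,\supp})$, and then controls your events (i) and (ii) via five finer events $B_1,\dots,B_5$ (with $B_1\cup B_2$ corresponding to your sign-correctness and $B_3\cup B_4\cup B_5$ to strict dual feasibility), each handled by the concentration tools you name. Two cosmetic remarks: your sign on $\epsilon_T$ in the gradient is reversed relative to the paper's convention, and the correspondence you draw between (i)/(ii) and the two displayed limits in \hyperref[assumption:B3]{\((\mathcal{B}_3)\)} is not quite one-to-one (terms from both displays feed into each of your two events), but neither affects the substance.
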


\begin{proof} It follows from the Karush-Kuhn-Tucker conditions that $\alasso$ is a solution of the adaptive Lasso estimator \eqref{definitiAdaptiveLasso} if $\nabla_{\theta}L_T(\alasso) + \lambda w^{\star}\beta(\alasso) = 0$, where $\beta(\alasso)\in \R^p$ is given as

\begin{equation}\label{bvector}
\begin{cases}
  \beta(\alasso)^i = \sign(\alasso^i) & \text{if}\quad \alasso^i \neq 0\\[1.5 ex]
\beta(\alasso)^i \in (-1,1) & \text{if}\quad \alasso^i = 0.
\end{cases}  
\end{equation}

\noindent Since $\nabla_{\theta}L_T(\alasso) = C_T(\alasso - \theta_0) + \epsilon_{T}$, 
we have that for all $j=1,\dots,p$: 
\begin{equation}
    (C_T(\alasso - \theta_0))^j + \epsilon_{T}^j = -\lambda w^j \beta(\alasso)^j.
\end{equation}
From \eqref{bvector}, we deduce: 
\begin{equation}
    \begin{cases}
        (C_T(\alasso - \theta_0))^j + \epsilon_{T}^j = -\lambda w^j \sign(\alasso^j) \hspace{0.4cm} &\text{if   }  j \in \text{supp}(\alasso)\\[1.5 ex]
        |(C_T(\alasso - \theta_0))^j + \epsilon_{T}^j| \leq  \lambda w^j \hspace{0.4cm} &\text{if   }  j \notin \text{supp}(\alasso).
    \end{cases}
\end{equation}
Let us define $\widetilde{u}:=(w^j \sign(\theta_0^j), j\in \supp)^{\star} \in \R^s$.  
Then, the necessary condition to guarantee  the sing consistency property (\ref{definitionSignConsistency})  for $\alasso$ becomes  
\begin{equation}\label{eq1}
    C_T^{\supp\supp }(\alasso_\supp - \thetaS) + \epsilon_{T,\supp} = -\lambda \widetilde{u}
\end{equation}
and 
\begin{equation}
    \left|\left(C_T^{\supp^{C}\supp}(\alasso_\supp - \thetaS)\right)^j + \epsilon_{T,\supp^C}^j\right| \leq  \lambda w_{\supp^C}^j, \quad \text{for} \quad j=1, ..., p-s. 
\end{equation}
Therefore, for $\alasso=_s \theta_0$ to be satisfied, it is necessary that   
\begin{equation}\label{conditionsforeachi}
    \begin{cases}
        \sign(\theta_{0}^j)( \alasso^j - \theta_0^j) \leq |\theta_0^j|, \hspace{0.4cm} &\text{for }j \in \supp \\[1.5 ex]
\left|\left(C_T^{\supp^C \supp}(\alasso_\supp - \thetaS)\right)^{j-s} + \epsilon_T^j\right| \leq  \lambda w^j, \hspace{0.4cm} &\text{for }j \notin \supp.
    \end{cases}
\end{equation}
From \eqref{eq1}, we obtain 
\begin{equation} \label{eq3}
    \alasso_\supp = \thetaS- (C_T^{\supp\supp })^{-1}\left(\lambda \widetilde{u} + \epsilon_{T, \supp}\right).
\end{equation}
 Combining \eqref{conditionsforeachi} and \eqref{eq3}, we observe that 
$\alasso \neq_s \theta_0$ if there 
 exists $j \in \supp$ such that
    \begin{equation}
         \left|\left( (C_T^{\supp\supp })^{-1}\left(\lambda \widetilde{u} + \epsilon_{T,\supp}\right) \right)^j \right| > |\theta_0^j|,
    \end{equation}
   or  $j \notin \supp$ satisfying
   \begin{equation}
       \left|\left(C_T^{\supp^C \supp}(C_T^{\supp\supp })^{-1}\left(\lambda \widetilde{u} + \epsilon_{T,\supp}\right)\right)^{j-s} + \epsilon_T^j\right| >  \lambda w^j.
   \end{equation}
Thus, for $  0 \leq \kappa \leq 1$, if we define the  sets
\begin{align}
    &B_1 = \left\{\exists j \in \supp :  \left|
    \left((C_T^{\supp\supp })^{-1} \epsilon_{T,\supp}\right)^j\right| \geq \frac{|\theta_0^j|}{2}\right\},  \\
    &B_2 = \left\{\exists j \in \supp :  \left|
    \left((C_T^{\supp\supp })^{-1} \widetilde{u}\right)^j\right| \geq \frac{|\theta_0^j|}{2 \lambda}\right\}, \\
    &B_3 = \left\{\exists j \notin \supp :  \left| \left( C_T^{\supp^C \supp}(C_T^{\supp\supp })^{-1} \epsilon_{T, \supp} \right)^{j-s}\right| \geq \frac{1-\kappa - \varepsilon}{2}\lambda w^j\right\},\\
    &B_4 = \left\{\exists j \notin \supp :  \left| \epsilon_T^j\right| \geq \frac{1-\kappa - \varepsilon}{2}\lambda w^j\right\},\\
    &B_5 = \left\{\exists j \notin \supp :  \left| \left( C_T^{\supp^C \supp} (C_T^{\supp\supp })^{-1}\widetilde{u}\right)^{j-s} \right| \geq (\kappa + \varepsilon) w^j\right\},
\end{align}
\noindent it is clear that  $\P(\alasso \neq_s\theta_0) \leq \sum_{i=1}^5 \P(B_i),$ and the statement of the theorem follows immediately from Lemmas \ref{LemmaB1} and \ref{LemmaB2}-\ref{LemmaB5}. 
\end{proof}

\noindent
The next result demonstrates the asymptotic normality and efficiency of the estimator $\alasso_\supp$.

\begin{theorem}\label{theoremAsNor}
   Assume~\hyperref[assumption:A1]{\((\mathcal{A}_1)\)}-\hyperref[assumption:A3]{\((\mathcal{A}_3)\)},~\hyperref[assumption:B1]{\((\mathcal{B}_1)\)}-\hyperref[assumption:B3]{\((\mathcal{B}_3)\)} and \hyperref[assumption:C]{\((\mathcal{C})\)}  hold, and let $\mathfrak{s}_T^2: = \alpha^{\star}(C_\infty^{\supp\supp})^{-1}\alpha$ for any vector $\alpha\in\R^s$ satisfying $\| \alpha \|_2\leq 1$. Then 
    
   \begin{equation}\label{astheoremstat}
    \sqrt{T} \mathfrak{s}_T^{-1} \alpha^{\star}(\alasso_\supp-\theta_{0,\supp})=\sqrt{T} \mathfrak{s}_T^{-1} \alpha^{\star} (C_\infty^{\supp\supp})^{-1}\epsilon_{T,\supp} + o_\P(1) \to_{D} \mathcal{N}(0,1), 
\end{equation}
where $o_\P(1)$ is a term that converges to zero in probability uniformly with respect to $\alpha$, if 

\begin{equation}\label{conditionANormality}
    \frac{\lambda^2 T  \mathcal{M}_{1,T}^2}{\mathfrak{s}_T^{2}\tau_\text{min}^2} + \frac{s\log{s}}{\sqrt{T} \mathfrak{s}_T}\left( \sqrt{\mathcal{K}}+\mathfrak{M} +\frac{s \mathcal{K}}{\tau_\text{min}^4} \right)\to 0. 
\end{equation}

\end{theorem}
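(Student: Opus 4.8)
The plan is to leverage Theorem \ref{constheorem}, which already guarantees that with probability tending to one the adaptive Lasso recovers the correct support, so that on this event $\supp(\alasso)=\supp$ and the KKT system restricted to the support, equation \eqref{eq3}, holds exactly. From \eqref{eq3} we get the exact decomposition
\begin{equation}
\alasso_\supp-\theta_{0,\supp} = -(C_T^{\supp\supp})^{-1}\epsilon_{T,\supp} - \lambda (C_T^{\supp\supp})^{-1}\widetilde u,
\end{equation}
valid on the sign-recovery event. Multiplying by $\sqrt T\,\mathfrak{s}_T^{-1}\alpha^{\star}$, the strategy is to show that (i) the bias term $\sqrt T\,\mathfrak{s}_T^{-1}\lambda\,\alpha^{\star}(C_T^{\supp\supp})^{-1}\widetilde u$ is $o_\P(1)$ uniformly in $\alpha$, (ii) the empirical inverse $(C_T^{\supp\supp})^{-1}$ can be replaced by the population inverse $(C_\infty^{\supp\supp})^{-1}$ at negligible cost, i.e. $\sqrt T\,\mathfrak{s}_T^{-1}\alpha^{\star}\big((C_T^{\supp\supp})^{-1}-(C_\infty^{\supp\supp})^{-1}\big)\epsilon_{T,\supp}=o_\P(1)$, and finally (iii) establish a CLT for the leading martingale term $\sqrt T\,\mathfrak{s}_T^{-1}\alpha^{\star}(C_\infty^{\supp\supp})^{-1}\epsilon_{T,\supp}$.

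For step (i), I would bound $\|\widetilde u\|_2$ using Assumption \hyperref[assumption:B1]{\((\mathcal{B}_1)\)}: since $w_j=|\widetilde\theta^j|^{-1}$ and $\widetilde\theta$ is $r_T$-consistent for $\eta_0$ with the bounds on $\sum_{j\in\supp}(1/|\eta_0^j|+\mathcal M_{2,T}/|\eta_0^j|^2)^2$, a Taylor expansion of $x\mapsto 1/x$ gives $\|\widetilde u\|_2 \lesssim \mathcal M_{1,T}$ with high probability; combined with $\|(C_T^{\supp\supp})^{-1}\|_{\text{op}}\lesssim \tau_\text{min}^{-1}$ (which itself needs a concentration argument showing $C_T^{\supp\supp}$ is close to $C_\infty^{\supp\supp}\succeq \tau_\text{min}$ via Assumption \hyperref[assumption:C]{\((\mathcal C)\)}, exactly as in the proof of Theorem \ref{constheorem}) and $\|\alpha\|_2\le 1$, the bias is $O_\P(\sqrt T\,\mathfrak{s}_T^{-1}\lambda\mathcal M_{1,T}\tau_\text{min}^{-1})$, which is $o_\P(1)$ precisely by the first term of condition \eqref{conditionANormality}. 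Step (ii) is where the $s\log s/(\sqrt T\mathfrak{s}_T)(\sqrt{\mathcal K}+\mathfrak M+s\mathcal K/\tau_\text{min}^4)$ term should enter: writing $(C_T^{\supp\supp})^{-1}-(C_\infty^{\supp\supp})^{-1} = (C_T^{\supp\supp})^{-1}(C_\infty^{\supp\supp}-C_T^{\supp\supp})(C_\infty^{\supp\supp})^{-1}$, I would control $\|C_T^{\supp\supp}-C_\infty^{\supp\supp}\|_{\text{op}}$ through Assumption \hyperref[assumption:C]{\((\mathcal C)\)} (a union bound over a net of the $s$-dimensional sphere costs a $\log s$-type factor, giving a bound like $\sqrt{\mathcal K s\log s/T}$), and $\|\epsilon_{T,\supp}\|_2$ through its quadratic variation (the $\epsilon_T^j$ are martingale terms with predictable bracket governed by $C_T$, yielding $\|\epsilon_{T,\supp}\|_2 = O_\P(\sqrt{s\,\mathfrak M/T})$ or similar). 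Assembling these with the $\mathfrak{s}_T^{-1}$ and $\tau_\text{min}^{-1}$ factors should reproduce the stated rate.

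For step (iii), the leading term is a continuous martingale: $\alpha^{\star}(C_\infty^{\supp\supp})^{-1}\epsilon_{T,\supp} = \frac1T\int_0^T \big(\Phi^\supp(X_t)(C_\infty^{\supp\supp})^{-1}\alpha\big)^{\star} dW_t$, a stochastic integral with deterministic integrand-coefficient, whose quadratic variation is $\frac{1}{T^2}\int_0^T \alpha^{\star}(C_\infty^{\supp\supp})^{-1}\Phi^\supp(X_t)^{\star}\Phi^\supp(X_t)(C_\infty^{\supp\supp})^{-1}\alpha\,dt = \frac1T \alpha^{\star}(C_\infty^{\supp\supp})^{-1}C_T^{\supp\supp}(C_\infty^{\supp\supp})^{-1}\alpha$. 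By ergodicity (Assumptions \hyperref[assumption:A1]{\((\mathcal A_1)\)}--\hyperref[assumption:A2]{\((\mathcal A_2)\)}) $C_T^{\supp\supp}\to C_\infty^{\supp\supp}$, so $T$ times the bracket converges to $\alpha^{\star}(C_\infty^{\supp\supp})^{-1}\alpha = \mathfrak{s}_T^2$; hence $\sqrt T\,\mathfrak{s}_T^{-1}\alpha^{\star}(C_\infty^{\supp\supp})^{-1}\epsilon_{T,\supp}$ has bracket tending to $1$, and the martingale CLT (e.g. the continuous-martingale version with the Lindeberg/bracket condition) yields the $\mathcal N(0,1)$ limit. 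The uniformity in $\alpha$ follows because all the $o_\P(1)$ bounds above are in operator norm and hence uniform over $\|\alpha\|_2\le 1$.

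The main obstacle I anticipate is step (ii): controlling $\sqrt T\,\mathfrak{s}_T^{-1}\alpha^{\star}\big((C_T^{\supp\supp})^{-1}-(C_\infty^{\supp\supp})^{-1}\big)\epsilon_{T,\supp}$ tightly enough. A naive bound $\|(C_T^{\supp\supp})^{-1}-(C_\infty^{\supp\supp})^{-1}\|_{\text{op}}\cdot\|\epsilon_{T,\supp}\|_2$ multiplies two independent-looking $O_\P(\sqrt{s\cdot/T})$ quantities and, after the $\sqrt T$ scaling, leaves something of order $s/\sqrt T$ up to $\mathcal K,\mathfrak M,\tau_\text{min}$ factors — matching the $s\log s/\sqrt T$ rate in \eqref{conditionANormality} only if one is careful with the net/union-bound logarithms and does not lose extra powers of $s$. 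It may be necessary to exploit the martingale structure of $\epsilon_{T,\supp}$ jointly with the fluctuation of $C_T^{\supp\supp}$ rather than bounding them separately; in any case, this cross term is the delicate one, and reconciling its bound with the precise form of condition \eqref{conditionANormality} (in particular the $s\mathcal K/\tau_\text{min}^4$ summand) is where the real work lies. All other steps are routine given the concentration machinery of Section \ref{s: conineqsection} and the arguments already deployed in the proof of Theorem \ref{constheorem}.
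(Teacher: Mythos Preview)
Your proposal is correct and follows essentially the same route as the paper: restrict to the sign-recovery event via Theorem \ref{constheorem}, control the bias term with Lemma \ref{uhat} and Proposition \ref{propBoundInverse}, replace $(C_T^{\supp\supp})^{-1}$ by $(C_\infty^{\supp\supp})^{-1}$ through the resolvent identity, and finish with the martingale CLT. Your worry about step (ii) is unnecessary---the paper uses precisely the naive product bound $\|(C_T^{\supp\supp})^{-1}-(C_\infty^{\supp\supp})^{-1}\|_{\text{op}}\cdot\|\epsilon_{T,\supp}\|_2$, splitting both factors at the common level $\sqrt{\mathfrak{s}_T}/T^{1/4}$ and applying Propositions \ref{propBoundInverse} and \ref{propMTGbernstein} separately; the $s\mathcal K/\tau_\text{min}^4$ summand in \eqref{conditionANormality} arises from the entrywise $s^2$-factor in Proposition \ref{propBoundInverse} rather than from any covering-net argument.
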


\begin{proof}


It follows from the Karush-Kuhn-Tucker conditions that $\alasso$ satisfies the identity
\begin{equation}
    (C_T(\alasso - \theta_0))^j + \epsilon_{T}^j = -\lambda w^j \beta^j(\alasso).
\end{equation}
for all $j=1,...,p$, where the vector $\beta(\alasso)$ has been introduced in the previous proof. Recalling that $\supp = \text{supp}(\theta_0)$ and considering the vector $y \in \R^p$ such that $y^i = w^i \beta(\alasso)^i$, if we represent $\alasso = (\alasso_\supp^{\star},\alasso_{\supp^C}^{\star})^{\star}$, the condition reads as

\begin{equation}
    C_T^{\supp\supp}(\alasso_\supp - \theta_{0,\supp}) + C_T^{\supp\supp^C}\alasso_{\supp^C} +\epsilon_{T,\supp} = -\lambda y_\supp,
\end{equation}
and for any $\alpha\in \R^s$ such that $\| \alpha\|_2\leq 1$, we have 
\begin{gather}
     \sqrt{T} \mathfrak{s}_T^{-1}\alpha^{\star}(\alasso_\supp - \theta_{0,\supp})= - \sqrt{T} \mathfrak{s}_T^{-1}\alpha^{\star} (C_T^{\supp\supp})^{-1} C_T^{\supp\supp^C}\alasso_{\supp^C} \\
     - \sqrt{T} \mathfrak{s}_T^{-1}\alpha^{\star} (C_T^{\supp\supp})^{-1}\epsilon_{T,\supp}  -  \sqrt{T} \mathfrak{s}_T^{-1}\lambda \alpha^{\star} (C_T^{\supp\supp})^{-1}y_\supp. 
\end{gather}
According to the Theorem \ref{constheorem}, under given assumptions, the probability $\P(\alasso_{\supp^C}\neq 0)$ converges to zero and, consequently, $ \sqrt{T} \mathfrak{s}_T^{-1}\alpha^{\star} (C_T^{\supp\supp})^{-1} C_T^{\supp\supp^C}\alasso_{\supp^C}$ converges to zero in probability. 


Next, for any $\varepsilon>0$, we have 
\begin{gather}
    \P\left( \left|  \sqrt{T} \mathfrak{s}_T^{-1}\lambda \alpha^{\star} (C_T^{\supp\supp})^{-1}y_\supp \right| >\varepsilon \right) \leq \P\left(  \sqrt{T} \mathfrak{s}_T^{-1}\lambda \nop{(C_T^{\supp\supp})^{-1}} \| w \|_2 > \varepsilon \right)  \\
    \leq 
    \P\left(  \frac{\| w \|_2}{\mathcal{M}_{1,T}} > \frac{\varepsilon \tau_\text{min}}{2 \lambda  \sqrt{T} \mathfrak{s}_T^{-1}\mathcal{M}_{1,T}} \right)+\P\left( \nop{(C_T^{\supp\supp})^{-1}}>\frac{2}{\tau_\text{min}} \right),
\end{gather}
and both probabilities converge to zero under \eqref{conditionANormality}  due to Proposition \ref{propBoundInverse} and Lemma \ref{uhat}.  Thus, $\sqrt{T} \mathfrak{s}_T^{-1}\lambda \alpha^{\star} (C_T^{\supp\supp})^{-1}y_\supp$ also converges to zero in probability. Finally, consider 
\begin{gather}
    \P \left(\left| \sqrt{T} \mathfrak{s}_T^{-1} \alpha^{\star} \left((C_T^{\supp\supp})^{-1}-(C_\infty^{\supp\supp})^{-1}\right) \epsilon_{T,\supp}\right|>\varepsilon \right) \nonumber \\
    \leq  \P \left(\sqrt{T} \mathfrak{s}_T^{-1}\left\|(C_T^{\supp\supp})^{-1}-(C_\infty^{\supp\supp})^{-1}\right\|_\text{op}  \|\epsilon_{T,\supp}\|_2>\varepsilon \right) \nonumber \\ 
    \leq  \P \left(\|\epsilon_{T,\supp}\|_2  >\varepsilon\frac{\sqrt{ \mathfrak{s}_T}}{T^{\frac{1}{4}}} \right)
    +\P \left(\left\|(C_T^{\supp\supp})^{-1}-(C_\infty^{\supp\supp})^{-1}\right\|_\text{op}  >\frac{\sqrt{ \mathfrak{s}_T}}{T^{\frac{1}{4}}} \label{equationAN} \right).
\end{gather}
Using Proposition \ref{propMTGbernstein}, we obtain that 
\begin{equation}\label{boundinAN}
    \P \left(\|\epsilon_{T,\supp}\|_2  >\varepsilon\frac{\sqrt{ \mathfrak{s}_T}}{T^{\frac{1}{4}}} \right) \leq 
    2 s\exp\left(\frac{-\sqrt{T}\varepsilon^2 \mathfrak{s}_T}{2s(\sqrt{\mathcal{K}}+\mathfrak{M})}\right) +  6s\exp\left(-\frac{T}{36}\right).
\end{equation}
Applying the resolvent identity and using Proposition \ref{propBoundInverse}, we can bound the second term of \eqref{equationAN} as
\begin{gather}
\P \left(\left\|(C_T^{\supp\supp})^{-1}-(C_\infty^{\supp\supp})^{-1}\right\|_\text{op}  >\frac{\sqrt{ \mathfrak{s}_T}}{T^{\frac{1}{4}}}\right) \nonumber \\
    \leq \P \left(\left\|C_T^{\supp\supp}-C_\infty^{\supp\supp}\right\|_\text{op}  >\frac{\tau_\text{min}^2}{2}\frac{\sqrt{ \mathfrak{s}_T}}{T^{\frac{1}{4}}} \right)
    +\P \left(\left\|(C_T^{\supp\supp})^{-1}\right\|_\text{op}  >\frac{2}{\tau_\text{min}} \right) \nonumber \\
     \leq 6s^2\exp\left(\frac{-\sqrt{T}  \mathfrak{s}_T \tau_\text{min}^4}{36\mathcal{K} s^2}\right)
    +12s^2\exp\left(\frac{-T \tau_\text{min}^2}{144\mathcal{K} s^2}\right) \label{eqqAN}.
\end{gather}
Again, from \eqref{conditionANormality}, the terms in \eqref{boundinAN} and \eqref{eqqAN} converge to 0, which proves the equality in \eqref{astheoremstat}. The statement of the theorem then immediately follows from the limit theorem for multivariate martingales (Theorem 2.1 in \cite{kuchlersorensen}). 
\end{proof}

\begin{remark}\label{r: CombinationSC-AN}\rm
The Lasso estimator is well-known for its inability to achieve both sign consistency and asymptotic normality simultaneously when using a single tuning parameter $\lambda$. This limitation arises because the tuning parameter that ensures accurate support recovery typically fails to provide unbiased or efficient estimates, both of which are essential for asymptotic normality. However, as shown in the theorem, the adaptive Lasso estimator addresses this issue. By utilizing a tuning parameter $\lambda$ that satisfies both Assumption \hyperref[assumption:B3]{\((\mathcal{B}_3)\)} and \eqref{conditionANormality}, the adaptive Lasso successfully balances these objectives, enabling sign consistency and asymptotic normality to coexist. This underscores a significant advantage of the adaptive Lasso in high-dimensional statistical inference.
\qed
\end{remark}

\begin{remark}\label{r: nonlinear-case}\rm 
The linear structure of the drift function, as defined in \eqref{linearModel}, is central to the results presented in this paper. This structure simplifies the Karush-Kuhn-Tucker optimization conditions, reducing them to a straightforward linear form. This simplification is pivotal, as it facilitates the theoretical analysis and enables precise derivations of support recovery guarantees and error bounds.

While the linear drift assumption is a cornerstone of this work, the methodology could be extended to accommodate more complex nonlinear drift structures, akin to the framework explored in \cite{Pod22}. In the nonlinear setting, the key idea would involve employing a localized linearization of the drift function using an analogue of the mean value theorem. This linear approximation would then enable the application of advanced probabilistic techniques, such as the general chaining method, to effectively control the supremum of the associated stochastic process. The general chaining approach is particularly well-suited to navigating the complexities of nonlinearity, as it provides tight bounds for stochastic processes over high-dimensional parameter spaces.

However, addressing the nonlinear case introduces substantial technical challenges. These include the increased complexity of the Karush-Kuhn-Tucker conditions and the need for more stringent regularity assumptions on the drift function. As a result, this paper focuses on the linear case, where the theoretical framework is more tractable, and precise insights can be obtained without the additional burdens introduced by nonlinearity.

Nonetheless, exploring the nonlinear case represents a promising avenue for future research. Extending these methods to nonlinear drift structures would broaden their applicability, enabling the analysis of diffusion processes with more intricate and realistic dynamics.
\qed
\end{remark}

\section{Examples of pre-estimators}\label{s: preestimators}
In the adaptive Lasso framework, the weight vector is generally determined by employing a pre-estimation of the parameter. This step is essential for enhancing the statistical performance of simultaneous inference and variable selection. The pre-estimation provides an initial approximation of the coefficients, effectively serving as a ``first guess".

In this section, we present examples of pre-estimators tailored to different scenarios involving model parameters and their dimensions. Specifically, we consider two distinct cases.
In the first case, we assume that the expected value of the entire empirical covariance matrix of the diffusion process, $C_\infty$, is positive definite. Under this assumption, the standard Lasso estimator can be effectively employed as the pre-estimator.
In the second case, we address scenarios where the number of parameters $p$ is significantly larger than the dimension $d$ of the process or the duration of observation $T$. For such high-dimensional settings, we propose an appropriate pre-estimator that leverages the assumption of partial orthogonality.
These two approaches highlight the flexibility of the adaptive Lasso method in accommodating a variety of model characteristics, ensuring robust and reliable parameter estimation.

\subsection{Lasso estimator} \label{sec3.1}
As is common in the literature on diffusion processes, we first consider the case where $C_\infty$ (as defined in \eqref{Cinfty}) is positive definite. In this setting, the Lasso estimator for the drift parameter of a diffusion process is defined as
\begin{equation}
    \widehat{\theta} := \argmin{\theta \in \Theta} \{L_T(\theta) + \lambda_l\|\theta\|_1\},
\end{equation}
where $\lambda_l$ is a tuning parameter.

Numerous studies in the literature have investigated the performance of the Lasso estimator in high-dimensional diffusion processes. For instance, in \cite{Pod22}, under the assumption $l_\text{min}(C_\infty) > 0$ and with an appropriately chosen $\lambda_l$, it is demonstrated that for drift functions of the form \eqref{linearModel}:
\begin{equation}\label{lassoConsistency} 
\|\theta_{0} - \widehat{\theta}\|_{\infty} \leq \| \theta_0 - \widehat{\theta} \|_2 = O_{\P}\left( \sqrt{\frac{s \log(p)}{dT}} \right). 
\end{equation}
If we define
\begin{equation}\label{ratio} 
r_T: = \sqrt{\frac{dT}{s\log(p)}}, 
\end{equation}
then in scenarios where this assumption holds, the Lasso estimator is relevant as a pre-estimator for the adaptive Lasso defined in \eqref{definitiAdaptiveLasso}. It is $r_T$-consistent for $\theta_0$, and the proxy $\eta_0$ is identical to $\theta_0$. Consequently, 
 Assumption \hyperref[assumption:B1]{\((\mathcal{B}_1)\)} holds with $\mathcal{M}_{2,T}=0$,  
 and, as noted in Remark \ref{remarkAdIC}, the adaptive irrepresentable condition in Assumption
 \hyperref[assumption:B2]{\((\mathcal{B}_2)\)} is automatically satisfied. 

 However, assuming $l_\text{min}(C_\infty) > 0$ introduces a restriction on the relationship between the model parameter dimensions. Specifically, this assumption implies that the dimension of the true parameter, $p$, must satisfy $p \leq d$, as the rank of $C_\infty$ is $\min\{p, d\}$. While this assumption is standard, it becomes restrictive when generalizing results to a high-dimensional framework where $p \gg d$ is of interest.

Although some results on the Lasso estimator for the drift parameter have been obtained without the requirement $l_\text{min}(C_\infty) > 0$ for linear drift functions (e.g., \cite{pina24}), alternative technical assumptions—albeit less restrictive—are necessary. These assumptions can often be challenging to implement or verify in practice.

To address this limitation, in the next subsection, we introduce a marginal estimator designed to serve as a suitable pre-estimator in scenarios where a partial orthogonality assumption is made. This approach allows us to extend our results to a genuinely high-dimensional framework, accommodating cases where $p \gg d$.

\subsection{Pre-estimation under the partial orthogonality assumption}\label{ss: partialOrtho}

When $p \gg d$, the assumption from the previous subsection becomes infeasible. To address this limitation, one potential strategy is to introduce an analogue of the restricted eigenvalue condition tailored to the diffusion process context. However, in this section, we adopt an alternative approach that ensures the necessary assumptions are satisfied under a partial orthogonality condition, as described below.

We consider the marginal estimator defined by: 
\begin{equation}\label{ortest} 
\widetilde{\theta} := -\frac{1}{T}\int_0^T \Phi^{\star}(X_t) dX_t, 
\end{equation} 
which can be viewed as an estimator of the quantity $\eta_0 := C_\infty \theta_0$.

\begin{remark}\rm 
It is possible to generalize this concept to a bridge-type estimator of the form: 
\begin{equation} 
\widetilde{\theta}^j := \left| \frac{1}{T}\int_0^T (\Phi^{\star}(X_t))_j dX_t\right|^\gamma, 
\end{equation} 
with certain $\gamma > 0$. However, for simplicity, we concentrate on the simpler case given by \eqref{ortest}. \qed 
\end{remark}

\noindent
Consider the following assumptions: \\


\noindent \textbf{Assumption ($\mathcal{D}_1$)}\label{assumption:D1} \textbf{(Partial Orthogonality):} Assume that the covariates associated with zero coefficients and those associated with nonzero coefficients are weakly correlated: 
\[
|C_\infty^{i, j}| \leq \rho_T, \quad \forall i \in \supp \text{ and } j \notin \supp,
\]
where $\rho_T$ is a small positive value. Furthermore, for a certain $\kappa \in [0,1]$, the following inequality holds:

\begin{equation}\label{ctort}
    \mathfrak{c}_T := \left(  \max_{j\notin \supp}|\eta_0^j|\right) \left( \sum_{j\in\supp} \frac{1}{s|\eta_0^j|^{2}} \right)^{\frac{1}{2}} \leq \frac{\kappa \tau_{\text{min}}}{s\rho_T}.  
\end{equation}

\noindent \textbf{Assumption ($\mathcal{D}_2$):}\label{assumption:D2} The minimum $\eta_{0,\supp}^\text{min} := \min{\{ |\eta_0^j |, j\in \supp \}}$ satisfies 

\begin{equation}
    \frac{\sqrt{s}(1 + \mathfrak{c}_T)}{\eta_{0,\supp}^\text{min} r_T}\to 0 \quad \text{and} \quad  \frac{r_T^2\log{p}}{T}\left( \mathcal{K}s^2 (\theta_{0,\supp}^\text{max})^2 + \sqrt{\mathcal{K}} + \mathfrak{M}\right)\to 0. 
\end{equation}

\noindent
The following theorem establishes the validity of using the marginal estimator defined in \eqref{ortest} as the initial estimator for the adaptive Lasso, provided the partial orthogonality assumption holds.

\begin{theorem}\label{Theorem3}
    Assume~\hyperref[assumption:A1]{\((\mathcal{A}_1)\)}-\hyperref[assumption:A3]{\((\mathcal{A}_3)\)}, \hyperref[assumption:B3]{\((\mathcal{B}_3)\)}, \hyperref[assumption:C]{\((\mathcal{C})\)} and \hyperref[assumption:D1]{\((\mathcal{D}_1)\)}-\hyperref[assumption:D2]{\((\mathcal{D}_2)\)}  hold. Then the marginal estimator $\widetilde{\theta}$ defined in \eqref{ortest} satisfies \hyperref[assumption:B1]{\((\mathcal{B}_1)\)}, i.e. it is $r_T$-consistent for $\eta_0$, and the adaptive irrepresentable condition in \hyperref[assumption:B2]{\((\mathcal{B}_2)\)} holds with $\kappa$ from \hyperref[assumption:D1]{\((\mathcal{D}_1)\)}. 
\end{theorem}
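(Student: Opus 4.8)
The plan is to verify the two conclusions of the theorem separately: first the $r_T$-consistency of $\widetilde\theta$ for $\eta_0 = C_\infty\theta_0$ (Assumption \hyperref[assumption:B1]{\((\mathcal{B}_1)\)}), then the adaptive irrepresentable condition (Assumption \hyperref[assumption:B2]{\((\mathcal{B}_2)\)}). The starting point is the exact decomposition of the marginal estimator. Plugging the model \eqref{model} with linear drift \eqref{linearModel} into \eqref{ortest} gives
\begin{equation*}
\widetilde\theta = -\frac{1}{T}\int_0^T \Phi^{\star}(X_t)\,dX_t = \frac{1}{T}\int_0^T \Phi^{\star}(X_t)b_{\theta_0}(X_t)\,dt - \frac{1}{T}\int_0^T \Phi^{\star}(X_t)\,dW_t = C_T\theta_0 + \frac{1}{T}\int_0^T \Phi^{\star}(X_t)\phi_0(X_t)\,dt - \epsilon_T,
\end{equation*}
so (absorbing the $\phi_0$ term into $\eta_0$, or assuming $\phi_0=0$ as is implicit elsewhere) $\widetilde\theta - \eta_0 = (C_T - C_\infty)\theta_0 - \epsilon_T$. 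Taking $\ell_\infty$ norms, $\|\widetilde\theta - \eta_0\|_\infty \le \|(C_T-C_\infty)\theta_0\|_\infty + \|\epsilon_T\|_\infty$, and each row of $(C_T-C_\infty)\theta_0$ is a quadratic-type functional of the process controlled through Assumption \hyperref[assumption:C]{\((\mathcal{C})\)} while $\epsilon_T$ is the martingale term; I would invoke the concentration inequalities of Section \ref{s: conineqsection} (the Bernstein-type bounds referenced as Proposition \ref{propMTGbernstein} and the quadratic concentration from Assumption \hyperref[assumption:C]{\((\mathcal{C})\)}) together with a union bound over the $p$ coordinates. The $\log p$ from the union bound, the factor $(\theta_{0,\supp}^{\text{max}})^2$ multiplying the $C_T-C_\infty$ contribution, and the constants $\sqrt{\mathcal K}$, $\mathfrak M$ are exactly the quantities appearing in the second display of Assumption \hyperref[assumption:D2]{\((\mathcal{D}_2)\)}, which is what forces that condition; it delivers $r_T\|\widetilde\theta-\eta_0\|_\infty = O_\P(1)$ with $r_T$ as in \eqref{ratio}. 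The remaining part of \hyperref[assumption:B1]{\((\mathcal{B}_1)\)} — the bound $\mathcal M_{1,T}=o(r_T)$ on $\big(\sum_{j\in\supp}(|\eta_0^j|^{-1}+\mathcal M_{2,T}|\eta_0^j|^{-2})^2\big)^{1/2}$ with $\mathcal M_{2,T}=\max_{j\notin\supp}|\eta_0^j|$ — follows by identifying $\mathcal M_{2,T}$ with $\max_{j\notin\supp}|\eta_0^j|\le s\rho_T\,\eta_{0,\supp}^{\min}$-type estimates via \eqref{ctort} and reading off the first display of \hyperref[assumption:D2]{\((\mathcal{D}_2)\)}, which says precisely $\sqrt{s}(1+\mathfrak c_T)/(\eta_{0,\supp}^{\min}r_T)\to0$; a short computation shows this sum is comparable to $\sqrt{s}(1+\mathfrak c_T)/\eta_{0,\supp}^{\min}$ up to the normalisations built into $\mathfrak c_T$.

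For the adaptive irrepresentable condition \hyperref[assumption:B2]{\((\mathcal{B}_2)\)}, the key structural observation is that under partial orthogonality $C_\infty^{\supp^C\supp}$ has every entry bounded by $\rho_T$. With $u = (|\eta_0^j|^{-1}\sign(\theta_0^j),\, j\in\supp)^{\star}$, I would bound
\begin{equation*}
\left|\left(C_\infty^{\supp^C\supp}(C_\infty^{\supp\supp})^{-1}u\right)^{j-s}\right| \le \big\|(C_\infty^{\supp^C\supp})_{j-s}\big\|_2 \,\big\|(C_\infty^{\supp\supp})^{-1}\big\|_{\text{op}}\,\|u\|_2 \le \sqrt{s}\,\rho_T \cdot \tau_{\min}^{-1} \cdot \Big(\sum_{j\in\supp}|\eta_0^j|^{-2}\Big)^{1/2},
\end{equation*}
using $\|(C_\infty^{\supp^C\supp})_{j-s}\|_2\le\sqrt{s}\,\rho_T$ and Assumption \hyperref[assumption:A3]{\((\mathcal{A}_3)\)} (note $C_\infty^{\supp\supp}=I^{\supp}$, so its smallest eigenvalue is $\tau_{\min}$). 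To get the required form $\le \kappa/|\eta_0^j|$ for each $j\notin\supp$, I would multiply and divide by $\max_{j\notin\supp}|\eta_0^j|\ge|\eta_0^j|$: the right-hand side above equals $\dfrac{\sqrt{s}\,\rho_T}{\tau_{\min}}\big(\sum_{j\in\supp}|\eta_0^j|^{-2}\big)^{1/2}$, and since $|\eta_0^j|\le\max_{k\notin\supp}|\eta_0^k|$ this is at most $\dfrac{1}{|\eta_0^j|}\cdot\dfrac{\sqrt s\,\rho_T}{\tau_{\min}}\big(\max_{k\notin\supp}|\eta_0^k|\big)\big(\sum_{j\in\supp}|\eta_0^j|^{-2}\big)^{1/2} = \dfrac{1}{|\eta_0^j|}\cdot\dfrac{s\rho_T}{\tau_{\min}}\,\mathfrak c_T$, which by \eqref{ctort} is $\le\kappa/|\eta_0^j|$. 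This is exactly the definition of $\mathfrak c_T$ in \eqref{ctort} and the bound it is assumed to satisfy, so \hyperref[assumption:B2]{\((\mathcal{B}_2)\)} holds with the same $\kappa$.

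The main obstacle, I expect, is the concentration step for $\|(C_T-C_\infty)\theta_0\|_\infty$: Assumption \hyperref[assumption:C]{\((\mathcal{C})\)} is phrased for the normalised quadratic form $H_v$ with $\|v\|_2=1$, whereas here I need to control $e_j^{\star}(C_T-C_\infty)\theta_0$, a bilinear object in the directions $e_j$ and $\theta_0/\|\theta_0\|_2$; I would handle this via polarisation (writing the bilinear form as a difference of two quadratic forms $H_{(e_j\pm\theta_0/\|\theta_0\|_2)/\sqrt2}$ up to rescaling) so that Assumption \hyperref[assumption:C]{\((\mathcal{C})\)} applies, then track the resulting $\|\theta_0\|_2 \le \sqrt{s}\,\theta_{0,\supp}^{\max}$ factor — this is the source of the $s^2(\theta_{0,\supp}^{\max})^2$ term (one $s$ from $\|\theta_0\|_2^2$, one from the interplay with $\log p$ and $r_T^2$) in \hyperref[assumption:D2]{\((\mathcal{D}_2)\)}. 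Everything else is bookkeeping: assembling the union bound, substituting $r_T$ from \eqref{ratio}, and checking that the displayed conditions in \hyperref[assumption:D2]{\((\mathcal{D}_2)\)} make the failure probabilities vanish and make $\mathcal M_{1,T}=o(r_T)$.
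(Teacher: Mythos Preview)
Your proposal is correct and follows essentially the same route as the paper: the decomposition $\widetilde\theta-\eta_0=(C_T-C_\infty)\theta_0-\epsilon_T$, a union bound over the $p$ coordinates with the concentration inequalities of Section~\ref{s: conineqsection} (Lemma~\ref{boundOperatorCovarianceMatrices} and Proposition~\ref{propMTGbernstein}), the identification $\mathcal{M}_{2,T}=\max_{j\notin\supp}|\eta_0^j|$ together with the first display of \hyperref[assumption:D2]{\((\mathcal{D}_2)\)} to get $\mathcal{M}_{1,T}=o(r_T)$, and exactly your chain of inequalities for the irrepresentable condition (the paper writes it as $|\eta_0^j|\cdot|(\cdots)^{j-s}|\le \sqrt{s}\rho_T\cdot\tau_{\min}^{-1}\cdot\sqrt{s}\,\mathfrak c_T\le\kappa$, which is your computation rearranged).

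The one substantive difference is how you control $e_j^{\star}(C_T-C_\infty)\theta_0$. The paper does not polarise with $\theta_0/\|\theta_0\|_2$; instead it applies the crude triangle inequality $|\sum_{i\in\supp}\theta_0^i(C_T^{ij}-C_\infty^{ij})|\le s\,\theta_{0,\supp}^{\max}\max_i|C_T^{ij}-C_\infty^{ij}|$ and then invokes the entrywise bound \eqref{ijConDif}. This is what produces the factor $s^2(\theta_{0,\supp}^{\max})^2$ in \hyperref[assumption:D2]{\((\mathcal{D}_2)\)}. Your polarisation with the unit vector $\theta_0/\|\theta_0\|_2$ is actually sharper: it yields an exponent of order $-T\varepsilon^2/(\mathcal K\|\theta_0\|_2^2)$ with $\|\theta_0\|_2^2\le s(\theta_{0,\supp}^{\max})^2$, so only a single factor of $s$ appears. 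Your attempt to locate a second $s$ ``from the interplay with $\log p$ and $r_T^2$'' is therefore unnecessary---your method simply requires a weaker condition than the paper's, and \hyperref[assumption:D2]{\((\mathcal{D}_2)\)} as stated is more than enough. A minor slip: $r_T$ here is not fixed to be the quantity in \eqref{ratio} (that was specific to the Lasso pre-estimator); it is any rate satisfying the two displays in \hyperref[assumption:D2]{\((\mathcal{D}_2)\)}.
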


\begin{proof}
Recalling that $\eta_0 = C_{\infty}\theta_0$, by definition of the marginal estimator, for all $j=1,\dots,p$ we have that 
\begin{gather}
    \widetilde{\theta}^j = \sum_{i=1}^s \frac{\theta_0^i}{T}\int_0^T \phi_i(X_t)^{\star}\phi_j(X_t)dt - \frac{1}{T} \int_0^T \phi_j(X_t)^{\star} dW_t\\
    =\eta_0^{j} + \sum_{i=1}^s\theta_0^i \left( C_T^{ij} - C_{\infty}^{ij} \right) - \epsilon_T^j.
\end{gather}
First, we prove the $r_T$-consistency for $\eta_0$. For any $\varepsilon>0$,
\begin{gather}
    \P \left( r_T\|\widetilde{\theta}-\eta_0\|_{\infty}>\varepsilon \right) \leq 
    \P \left( r_T\max_{1\leq j \leq p}|\sum_{i=1}^s\theta_0^i \left( C_T^{ij} - C_{\infty}^{ij} \right)|>\varepsilon/2 \right)\\
    +\P \left( r_T\max_{1\leq j \leq p}|\epsilon_T^j|>\varepsilon/2 \right)
    \leq p \max_{1\leq j \leq p}\P \left( r_T|\sum_{i=1}^s\theta_0^i \left( C_T^{ij} - C_{\infty}^{ij} \right)|>\varepsilon/2 \right)\\
    +p \max_{1\leq j \leq p} \P \left( r_T |\epsilon_T^j|>\varepsilon/2 \right)
    \leq p \max_{1\leq i, j \leq p}\P \left( \left| C_T^{ij} - C_{\infty}^{ij} \right|> \frac{\varepsilon}{2s\theta_{0,\supp}^\text{max}r_T} \right) \\
    +
    p \max_{1\leq j \leq p} \P \left( |\epsilon_T^j|>\frac{\varepsilon}{2r_T} \right)\\
    \leq 6p\exp\left(\frac{-T\varepsilon^2}{144\mathcal{K}s^2 (\theta_{0,\supp}^\text{max})^2r_T^2}\right)
    +2p \exp\left(\frac{-T\varepsilon^2}{8(\sqrt{\mathcal{K}}+\mathfrak{M})r_T^2}\right) 
    + 6p\exp\left(-\frac{T}{36}\right), 
\end{gather}
where the last inequality holds due to the Lemma \ref{boundOperatorCovarianceMatrices} and Proposition \ref{propMTGbernstein}, and all three terms converge to zero under the Assumption \hyperref[assumption:D2]{\((\mathcal{D}_2)\)}. 

Now, for $\mathcal{M}_{2,T}=\max_{j\notin\supp}|\eta_0^j|$, according to the the Assumption \hyperref[assumption:D2]{\((\mathcal{D}_2)\)}, we get 
\begin{equation}
    \left( \sum_{j\in \supp}\left( \frac{1}{|\eta_0^j|} + \frac{\mathcal{M}_{2,T}}{|\eta_0^j|^2}\right)\right)^2 \leq \frac{2s}{(\eta_{0,\supp}^\text{min})^2}(1 + \mathfrak{c}_T^2)=o(r_T^2) 
\end{equation}
 and \hyperref[assumption:B1]{\((\mathcal{B}_1)\)} holds. 
 
 To show that the adaptive irrepresentable condition \hyperref[assumption:B2]{\((\mathcal{B}_2)\)} holds, we first notice that for all $j\notin\supp,$ Assumption \hyperref[assumption:D1]{\((\mathcal{D}_1)\)} gives the bound $\|( C_\infty^{\supp^C \supp} )_{j-s}\|_2^2\leq s \rho_T^2 $ and 

\begin{equation}
|\eta_0^j|\left| \left( C_\infty^{\supp^C \supp} (C_\infty^{\supp\supp })^{-1} u \right) ^{j-s} \right|\leq \left\|\left( C_\infty^{\supp^C \supp} \right)_{j-s}\right\|_2 \left\| 
 (C_\infty^{\supp\supp })^{-1}\right\|_\text{op} |\eta_0^j|\| u\|_2 \leq \frac{s\rho_T \mathfrak{c}_T}{\tau_\text{min}}\leq \kappa,  
\end{equation}
where the last inequality holds due to \eqref{ctort} and finishes the proof. 
\end{proof}

\begin{remark}\rm 
A special case of Theorem \ref{Theorem3} arises when $\rho_T = o(T^{-1/2})$, implying that the covariates with nonzero and zero coefficients are essentially uncorrelated. In this scenario, assuming that $\theta_{0,\supp}^\text{max} + \mathcal{K} + \mathfrak{M} = O(1)$, we can set $\eta_0^j = 0$ for $j \notin \supp$, and condition \eqref{ctort} is satisfied. As a result, the marginal estimator estimator $\widetilde{\theta}$ in \eqref{ortest} is sign consistent with the rate $r_T=o(\sqrt{T/(s^2\log(p))})$ and the adaptive irrepresentable condition \hyperref[assumption:B2]{\((\mathcal{B}_2)\)} is automatically satisfied. \qed  
\end{remark}

\section{Concentration inequalities}\label{s: conineqsection}

To establish the support recovery property for the adaptive Lasso in the context of diffusion processes, it is essential to effectively control probabilities associated with additive functionals of the form: 
\begin{equation}
    \frac{1}{T}\int_0^T f(X_t)dt,
\end{equation}
where $f: \R^d \to \R$ is a measurable function, and $X_t$ is the diffusion process defined in \eqref{model}.

The control of these additive functionals is achieved using concentration inequalities that account for the probabilistic behavior of  $X_t$ and the growth conditions on $f$. These inequalities rely on a careful interplay between assumptions about the properties of $f$ 
and the ergodicity and regularity of the diffusion process $X$. 
For example, $f$ is usually assumed to belong to a class of functions defined by parameters that control its growth rate (e.g., polynomially bounded growth) and a scaling factor. Additionally, the diffusion process 
$X$ must satisfy certain ergodicity conditions, such as subexponential or exponential mixing rates, which depend on its drift and diffusion coefficients. These concentration inequalities provide explicit bounds on the likelihood that the additive functional deviates from its mean. Specifically, they ensure that for any $\varepsilon>0$ and sufficiently large $T$, the probability of the deviation
\[
\left| \frac{1}{T}\int_0^T f(X_t)dt - \mu(f) \right| > \varepsilon
\]
is exponentially small. Here $\mu(f) := \int_{\R^d} f(x) \mu(dx)$ is the mean of $f$ with respect to the invariant measure $\mu$ of $X_t$. 

Such results are typically derived using mathematical tools like the Poisson equation, martingale approximations, and the mixing properties of $X$. These approaches ensure that the derived bounds explicitly account for the dimensionality of the model and align with the structural properties of both the diffusion process and the function $f$.
These inequalities are pivotal in proving that the adaptive Lasso accurately identifies the true support of the parameter in high-dimensional diffusion settings.


Several key results regarding concentration inequalities for additive functionals in diffusion processes are discussed in works such as \cite{Str21, pina24, dje04, Tro23, var19}. These results are established under particular assumptions about the diffusion process $X$ and the function $f$. As an example of how such techniques are applied, consider the following concentration inequality derived from Theorem 3 and Section 3 in \cite{Sauss12}.

\begin{theorem}\label{concentrationInequalitytheo}
Assume that ~\hyperref[assumption:A1]{\((\mathcal{A}_1)\)}-\hyperref[assumption:A3]{\((\mathcal{A}_3)\)} hold and let $f 
    : \R^d \rightarrow \R$ be a Lipschitz function. Then there exists a constant $K>0$, independent of $d$, such that for all $\mu \in \R$,
    \begin{equation}
        \E\left[\exp\left(\frac{\mu}{T}\int_0^T\left(f(X_t) - \E\left[f(X_t)\right]\right)dt\right)\right] \leq \exp\left(K \mu^2 \nlip{f}^2/T\right).
    \end{equation}
\end{theorem}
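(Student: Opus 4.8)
\noindent
The plan is to deduce the stated bound from a transportation-cost inequality for the law of the diffusion path, in the spirit of \cite{dje04} and \cite{Sauss12}. Write $\P_X$ for the law of $(X_t)_{t\in[0,T]}$ on the path space $C([0,T];\R^d)$, and equip this space with the (non-normalised) $L^2$ metric $d_2(\omega,\omega'):=\bigl(\int_0^T\|\omega_t-\omega'_t\|_2^2\,dt\bigr)^{1/2}$. The additive functional in the statement is $g(X_{[0,T]})$ with $g(\omega):=\frac1T\int_0^T f(\omega_t)\,dt$, and the point is that $g$ is a Lipschitz function of the path for the metric $d_2$, while $\P_X$ concentrates around its mean for exactly such functions.

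First I would invoke the transportation inequality. Under Assumption~\hyperref[assumption:A1]{\((\mathcal{A}_1)\)} (globally Lipschitz, coercive drift) together with Assumption~\hyperref[assumption:A2]{\((\mathcal{A}_2)\)} (which fixes the law of $X_0$), the measure $\P_X$ satisfies a quadratic transportation-cost inequality $T_2(C)$ on $\bigl(C([0,T];\R^d),d_2\bigr)$, i.e. $W_2(\nu,\P_X)^2\le 2C\,H(\nu\,|\,\P_X)$ for every $\nu\ll\P_X$, where $W_2$ is the $L^2$-Wasserstein distance for $d_2$ and $H$ the relative entropy. This is essentially Theorem~3 of \cite{Sauss12} (taken at Hurst index $H=1/2$), see also Section~3 there, as well as \cite{dje04} for diffusions. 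The constant $C$ depends only on the constants in \hyperref[assumption:A1]{\((\mathcal{A}_1)\)} and, crucially, not on $d$: since the diffusion matrix is the identity, the Girsanov/Cameron--Martin computation bounding the relative entropy of a drift-perturbed law involves only $\int_0^T\|h_t\|_2^2\,dt$ and carries no dimensional factor, while the coercivity $\langle\bt(x),x\rangle\ge M\|x\|_2^2$ controls the synchronous coupling of two solutions driven by the same Brownian motion with a rate governed solely by $M$. Assumption~\hyperref[assumption:A3]{\((\mathcal{A}_3)\)} plays no role here.

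Next I would pass to the dual, concentration form. Since $W_1\le W_2$, $\P_X$ also satisfies $T_1(C)$ for $d_2$, and by the Bobkov--G\"otze duality this is equivalent to the Laplace transform bound $\E\bigl[\exp(\lambda(g-\E[g]))\bigr]\le\exp(C\lambda^2/2)$ for every $g$ that is $1$-Lipschitz with respect to $d_2$ and every $\lambda\in\R$. It then remains to compute the Lipschitz constant of our $g$: for paths $\omega,\omega'$, the Lipschitz property of $f$ followed by Cauchy--Schwarz gives
\[
|g(\omega)-g(\omega')|\le\frac{\nlip{f}}{T}\int_0^T\|\omega_t-\omega'_t\|_2\,dt\le\frac{\nlip{f}}{\sqrt{T}}\,d_2(\omega,\omega'),
\]
so $g$ is $\nlip{f}/\sqrt{T}$-Lipschitz. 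Applying the previous display to $g$ rescaled to be $1$-Lipschitz, with $\lambda=\mu$, yields
\[
\E\left[\exp\left(\frac{\mu}{T}\int_0^T\bigl(f(X_t)-\E[f(X_t)]\bigr)\,dt\right)\right]\le\exp\left(\frac{C}{2}\,\frac{\mu^2\,\nlip{f}^2}{T}\right),
\]
which is the assertion with $K=C/2$.

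The main obstacle lies entirely in the first step --- establishing that the transportation constant $C$ is genuinely dimension-free. The subtle point is that a naive Gronwall estimate bounds $C$ in terms of the global Lipschitz constant of $\bt$, and the latter could a priori scale with $d$ through the alphabet $(\phi_j)$; the remedy, and the reason coercivity rather than mere Lipschitzness is imposed in \hyperref[assumption:A1]{\((\mathcal{A}_1)\)}, is to run the path-space estimate through a synchronous coupling whose contraction depends only on the dimension-free constant $M$. One should also check that the mixture over the invariant initial law in \hyperref[assumption:A2]{\((\mathcal{A}_2)\)} does not destroy $T_2$, which holds because the invariant measure of a coercive diffusion itself satisfies a dimension-free $T_2$ inequality. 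Everything after that --- the Bobkov--G\"otze duality and the Cauchy--Schwarz Lipschitz bound --- is routine.
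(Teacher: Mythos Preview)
The paper does not give its own proof of this theorem; it simply states the result as ``derived from Theorem~3 and Section~3 in \cite{Sauss12}''. Your proposal is precisely the derivation that citation points to---invoking the path-space $T_2$ inequality for the diffusion from \cite{Sauss12,dje04}, passing to the dual Laplace bound via Bobkov--G\"otze, and computing the $d_2$-Lipschitz constant $\nlip{f}/\sqrt{T}$ of the time average---so your approach is exactly the one the paper implicitly relies on.
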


\noindent
This result provides explicit bounds on the deviations of additive functionals, demonstrating an example of conditions—such as Lipschitz continuity—under which these deviations can be controlled with high probability. 

In particular, the following proposition, which is a direct consequence of Theorem \ref{concentrationInequalitytheo}, provides a method to determine the constant $\mathcal{K}$ in Assumption \hyperref[assumption:C]{\((\mathcal{C})\)} under the Lipschitz continuity condition.

\begin{proposition}\label{lipcondC}
    Assume that ~\hyperref[assumption:A1]{\((\mathcal{A}_1)\)}-\hyperref[assumption:A3]{\((\mathcal{A}_3)\)} hold and for all $i,j=1,\dots,p$ the functions $\left\langle \phi_i , \phi_j\right\rangle:\R^d \rightarrow \R$ are assumed to be Lipschitz continuous, with Lipschitz constants uniformly bounded in $p, s$ and $d$. Then, Assumption \hyperref[assumption:C]{\((\mathcal{C})\)} holds with $\mathcal{K}=\nop{\mathcal{Q}}$, where the  matrix $\mathcal{Q} \in \M{p}{p}$ and $\mathcal{Q}_{ij}:=\|\left\langle \phi_i , \phi_j\right\rangle\|_{\text{\rm Lip}}$.
\end{proposition}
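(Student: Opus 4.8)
\textbf{Proof plan for Proposition \ref{lipcondC}.}

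The plan is to reduce Assumption \hyperref[assumption:C]{\((\mathcal{C})\)} directly to the scalar concentration inequality of Theorem \ref{concentrationInequalitytheo}. Fix $v \in \R^p$ with $\|v\|_2 = 1$. The first step is to rewrite the quadratic functional $H_v(X_{[0,T]})$ from \eqref{Hvdef} as a scalar additive functional of a single function $f_v : \R^d \to \R$. Indeed, writing $v^{\star}\Phi(X_t)^{\star}\Phi(X_t)v = \|\Phi(X_t)v\|_2^2 = \sum_{i,j=1}^p v^i v^j \langle \phi_i(X_t), \phi_j(X_t)\rangle$, we set $f_v := \sum_{i,j=1}^p v^i v^j \langle \phi_i, \phi_j\rangle$, so that
\begin{equation}
    H_v(X_{[0,T]}) = \frac{1}{T}\int_0^T \left( f_v(X_t) - \E_{\theta_0}[f_v(X_t)] \right) dt.
\end{equation}

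The second step is to bound the Lipschitz norm of $f_v$ in terms of the matrix $\mathcal{Q}$. By the triangle inequality for $\nlip{\cdot}$ and the definition $\mathcal{Q}_{ij} = \|\langle \phi_i, \phi_j\rangle\|_{\text{Lip}}$, we get $\nlip{f_v} \leq \sum_{i,j} |v^i||v^j|\, \mathcal{Q}_{ij} = |v|^{\star}\mathcal{Q}|v|$, where $|v|$ denotes the entrywise absolute value. Since $\mathcal{Q}$ has nonnegative entries it is in particular bounded in operator norm, and $|v|^{\star}\mathcal{Q}|v| \leq \nop{\mathcal{Q}}\,\||v|\|_2^2 = \nop{\mathcal{Q}}$ because $\||v|\|_2 = \|v\|_2 = 1$. (One should note here that the uniform boundedness of the $\|\langle\phi_i,\phi_j\rangle\|_{\text{Lip}}$ in $p,s,d$ is what guarantees $\mathcal{Q}$, and hence $\mathcal{K} = \nop{\mathcal{Q}}$, is a genuine constant rather than a quantity blowing up with the dimensions.) Then the third step applies Theorem \ref{concentrationInequalitytheo} to the Lipschitz function $f_v$: for every $\mu \in \R$,
\begin{equation}
    \E\left[\exp\left(\mu H_v(X_{[0,T]})\right)\right] \leq \exp\left(K\mu^2 \nlip{f_v}^2 / T\right) \leq \exp\left(K \nop{\mathcal{Q}}^2 \mu^2 / T\right),
\end{equation}
which is \eqref{conineq} with $\mathcal{K} = K\nop{\mathcal{Q}}^2$ (or, absorbing the universal constant $K$ and possibly the square into the definition, $\mathcal{K} = \nop{\mathcal{Q}}$ as stated — the precise bookkeeping of $K$ is cosmetic).

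The only genuine subtlety — and the step I would be most careful about — is ensuring that $f_v$ is actually Lipschitz so that Theorem \ref{concentrationInequalitytheo} applies: this is where the hypothesis that each $\langle \phi_i,\phi_j\rangle$ is Lipschitz is used, and one needs the finite sum defining $f_v$ to inherit a finite Lipschitz constant, which the bound $\nlip{f_v} \le \nop{\mathcal{Q}}$ provides. A secondary point is that the constant $K$ in Theorem \ref{concentrationInequalitytheo} is independent of $d$; combined with the dimension-uniform bound on $\mathcal{Q}$, this keeps $\mathcal{K}$ from depending on the model dimensions, which is exactly what is needed for the tracking of constants emphasized in Remark \ref{r: quantitiesModel}. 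Everything else is the routine bilinearity expansion and the triangle inequality for Lipschitz norms.
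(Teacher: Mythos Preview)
Your proposal is correct and is exactly the ``direct consequence'' the paper alludes to: the paper does not give a written proof of Proposition~\ref{lipcondC}, merely stating that it follows immediately from Theorem~\ref{concentrationInequalitytheo}, and your argument (expand $H_v$ as the centred additive functional of $f_v=\sum_{i,j}v^iv^j\langle\phi_i,\phi_j\rangle$, bound $\nlip{f_v}\le |v|^{\star}\mathcal{Q}|v|\le\nop{\mathcal{Q}}$, then apply Theorem~\ref{concentrationInequalitytheo}) is precisely that deduction. Your observation that the literal outcome is $\mathcal{K}=K\nop{\mathcal{Q}}^2$ rather than $\nop{\mathcal{Q}}$ is also correct---the paper absorbs the universal constant $K$ and the square into $\mathcal{K}$, consistent with its convention of tracking only the dependence on $p,s,d$ (cf.\ Remark~\ref{r: quantitiesModel}).
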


\begin{remark}\rm 
Assumption \hyperref[assumption:C]{\((\mathcal{C})\)}, 
which ensures sub-Gaussian concentration inequalities for the martingale term, can be relaxed to the alternative Condition 
\(\mathcal{C}'\) below,
allowing for sub-exponential growth:

\smallbreak
\noindent \textbf{Assumption ($\mathcal{C}'$):}\label{assumption:Cprime}  For the random functional $H_v(X_{[0,T]})$ defined in \eqref{Hvdef} there exist constants $\mathfrak{C}$ and $1\leq \mathfrak{d} \leq 2$ such that that for all $x>0$  and for all $v\in \R^p$ satisfying $\|v\|_2 = 1$

    \begin{equation}
    \P\left(H_v(X_{[0,T]}) \geq x  \right) \leq 
    \exp\left(-\mathfrak{C}Tx^\mathfrak{d}\right). 
\end{equation}
This relaxation broadens the applicability of the results to settings where sub-Gaussian assumptions may be too restrictive, while still ensuring meaningful concentration bounds.

Adapting the proofs of the main results under Assumption \hyperref[assumption:Cprime]{\((\mathcal{C}')\)} involves only minor modifications. Specifically, straightforward adjustments to Lemma \ref{boundOperatorCovarianceMatrices}, Proposition \ref{propMTGbernstein}, and subsequent results suffice to incorporate the weaker growth constraints introduced by Assumption \hyperref[assumption:Cprime]{\((\mathcal{C}')\)}. For clarity and brevity, detailed derivations are omitted here.

However, it is important to note that relaxing the assumption to \hyperref[assumption:Cprime]{\((\mathcal{C}')\)} imposes a trade-off. Specifically, the maximum permissible number of non-zero parameters in the unknown vector $\theta_0$
decreases compared to the bounds established under Assumption \hyperref[assumption:B3]{\((\mathcal{B}_3)\)} and detailed in Remark \ref{r: ConclusionParameters}. This reduction reflects the balance between relaxing the martingale term's assumptions and tightening the sparsity constraints on 
$\theta_0$. Despite these limitations, the relaxed Assumption \hyperref[assumption:Cprime]{\((\mathcal{C}')\)} remains practically significant for high-dimensional models where sub-Gaussian assumptions may be overly stringent, offering flexibility for a broader range of applications.
\qed  
\end{remark}

\noindent
Employing the relaxed Assumption \hyperref[assumption:Cprime]{\((\mathcal{C}')\)}
opens the door to alternative forms of concentration inequalities beyond the sub-Gaussian regime. Notably, this includes the use of classical log-Sobolev inequalities, which provide a powerful framework for deriving sub-exponential concentration bounds. Log-Sobolev inequalities are particularly effective for capturing the concentration properties of processes with weaker regularity or heavier tails, making them a natural choice under the broader scope of 
Assumption \hyperref[assumption:Cprime]{\((\mathcal{C}')\)}.

For Ornstein-Uhlenbeck (OU) type processes, an alternative route to concentration inequalities involves Malliavin calculus techniques. These rely on the properties of the Malliavin derivative, which satisfies the classical chain rule and offers a rigorous framework for studying the regularity of functionals of the OU-process. As developed in \cite{NV09} and applied in \cite{Cio20}, this approach is especially well-suited for establishing sub-exponential concentration inequalities.

The Malliavin calculus framework provides several advantages for analyzing the OU-process. First, it exploits the smoothness of the probability density associated with the OU-process, enabling the use of integration by parts formulas that are central to deriving sharp bounds. Second, it offers precise control over the deviations of functionals by linking the stochastic dynamics of the process with the geometry of its underlying probability space. This connection facilitates the derivation of sub-exponential concentration inequalities under conditions that are less stringent than those typically required for sub-Gaussian behavior.

Another effective method for establishing sub-exponential (or even stronger) concentration inequalities involves the Poisson equation, as explored in \cite{Tro23}. This approach uses solutions to the Poisson equation to directly manage deviations of additive functionals. It is particularly effective for a wide range of diffusion processes and provides a means to slightly relax and extend Assumptions ~\hyperref[assumption:A1]{\((\mathcal{A}_1)\)}-\hyperref[assumption:A3]{\((\mathcal{A}_3)\)}. By doing so, it broadens the applicability of the theoretical results to scenarios with more general dynamics or weaker regularity conditions.

However, deriving and interpreting the explicit constant \(\mathfrak{C}\)
 in this framework poses significant challenges. The difficulty stems from the dependence of 
\(\mathfrak{C}\) on the solution of the Poisson equation, which is often intricately tied to properties of the underlying diffusion process, such as its invariant measure and ergodic rates. Determining 
\(\mathfrak{C}\) typically requires additional analytical effort, including a detailed examination of the growth and regularity of the drift and diffusion coefficients. Despite these challenges, the Poisson equation approach remains a powerful and versatile tool. It provides a pathway to sharper concentration bounds and offers greater flexibility in modeling assumptions, making it a valuable asset in the study of high-dimensional diffusion processes.


\section{Numerical studies}\label{s: numericalStudies}

This section presents several numerical experiments conducted on simulated data to validate and demonstrate our theoretical results. Specifically, we evaluate the performance of the adaptive Lasso estimator in comparison to the classical maximum likelihood estimator (MLE) and the standard Lasso estimator in estimating the drift function's parameters within a high-dimensional diffusion process.

As described in Section \ref{s: preestimators}, the adaptive Lasso computation requires an initial pre-estimator. For these experiments, the standard Lasso estimator was employed as the pre-estimator, although other alternatives could be used depending on the context. The experiments include four main evaluations: 
(a) A heatmap diagram is utilized to visually compare the prediction accuracy of the adaptive Lasso, MLE, and standard Lasso estimators, (b) A graphical analysis is presented to demonstrate the variable selection capabilities of each estimator, (c) The $\ell_1$ and $\ell_2$ error norms are analyzed as the dimensionality of the model parameters increases, providing insight into the prediction accuracy under different high-dimensional settings and (d) We evaluate the asymptotic normality of the adaptive Lasso estimator across different time horizons.

Inspired by the numerical study in \cite{pina24} and using the YUIMA package \cite{yuima}, we simulate a 
$d$-dimensional diffusion process as described in \eqref{model}, where the linear drift function is defined by:
\begin{equation}\label{simulatedProcess}
    \bto(x) = 3sx + \sum_{i=1}^p \theta_0^i \cos\left((i+1)x\right),
\end{equation}
where the underlying parameter of interest  $\theta_0 \in \R^p$ (here $\cos(y)$ is computed componentwise for $y\in \R^d$). Furthermore, the value of each component $\theta^i$ is properly restricted to ensure that the drift function satisfies  Assumptions \hyperref[assumption:A1]{\((\mathcal{A}_1)\)}-\hyperref[assumption:A3]{\((\mathcal{A}_3)\)}. Additionally, Assumption \hyperref[assumption:C]{\((\mathcal{C})\)} is verified by Proposition \ref{lipcondC}. Finally, Assumptions \hyperref[assumption:B1]{\((\mathcal{B}_1)\)}-\hyperref[assumption:B3]{\((\mathcal{B}_3)\)} are satisfied due to the use of the Lasso estimator as the pre-estimator.

Figure \ref{fig:heatmap} illustrates a heatmap comparison of the performance of the three estimators, highlighting the structural properties of the parameter vector of interest. The heatmap visually represents the sparsity of the parameter: blank spaces correspond to zero components, while shaded squares indicate nonzero components, with the shading intensity reflecting the magnitude of the respective values.

The analysis is based on simulations of a 5-dimensional diffusion process structured as \eqref{simulatedProcess}, with a drift parameter dimensionality of $p=30$ and an observation period of $T=10$. 
This controlled setup evaluates each estimator's capability to identify both the sparsity pattern and the magnitude of the parameter vector effectively.

While the theoretical results assume continuous observation of the process trajectories, practical computational constraints necessitate a discretized approximation for simulation purposes. For this study, an equidistant discretization step of $\Delta_n = 0.05$
was employed, as tests indicate that further reductions in the step size yield negligible accuracy improvements. This observation aligns with findings in \cite{Gai19}, which offers an in-depth analysis of the impact of discretization steps on high-dimensional diffusion process simulations.
The tuning parameters for both the Lasso and adaptive Lasso estimators were determined through cross-validation, in line with the methodology described in \cite{Cio20}.

\begin{figure}[H] 
    \centering
    \includegraphics[width=0.9\textwidth, height=5cm]{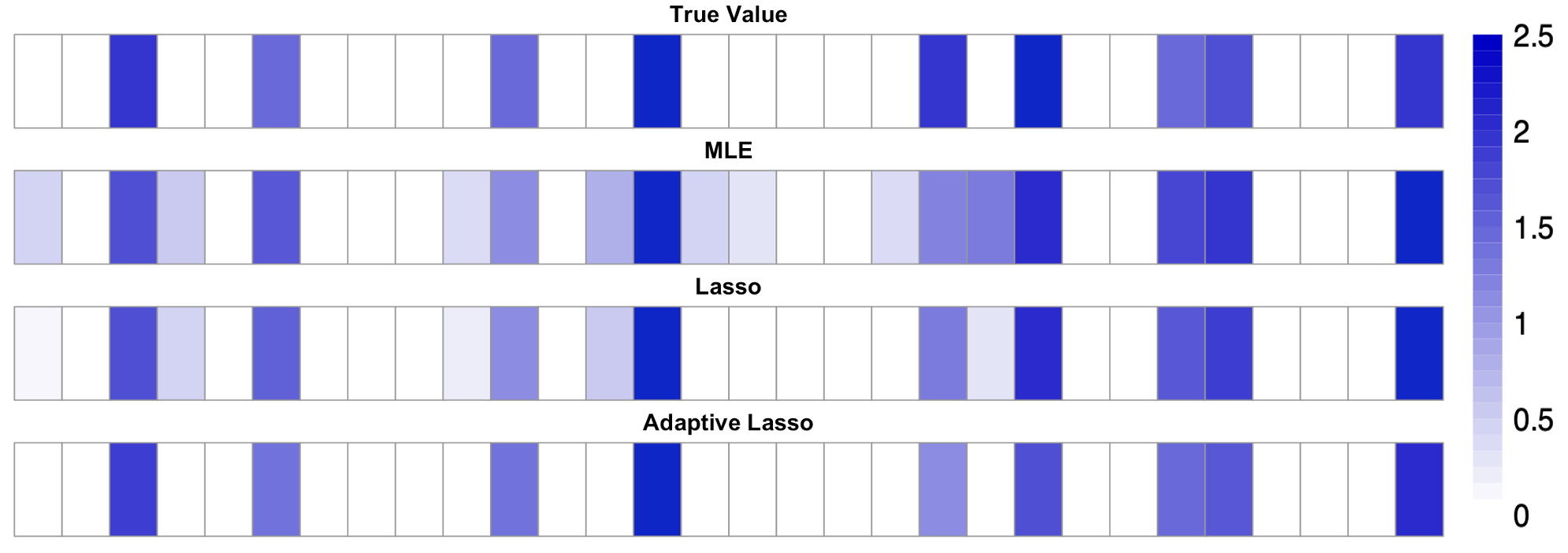} 
    \captionsetup{margin=1cm} 
    \caption{Comparison of the overall performance of the MLE, Lasso estimator, and adaptive Lasso in terms of support recovery and prediction accuracy.}
    \label{fig:heatmap} 
\end{figure}

 \noindent 
 Figure \ref{fig:heatmap} reveals that the Lasso and its adaptive variant substantially outperform the classical MLE in support recovery. The classical MLE struggles to capture the sparsity structure of the true parameter, even for relatively low-dimensional settings of $d$ and $p$.
In contrast, the adaptive Lasso demonstrates a marked improvement over the standard Lasso, providing superior accuracy in identifying the support of the parameter vector.

These findings are further corroborated by Figure \ref{fig:SuppRecEvaluation}, which reinforces the adaptive Lasso's advantage in support recovery performance. Both figures highlight the adaptive Lasso's effectiveness in balancing accurate variable selection with robust parameter estimation in high-dimensional diffusion processes.
 
 \begin{figure}[H] 
    \centering
    \includegraphics[width=0.87\textwidth, height=7.5cm]{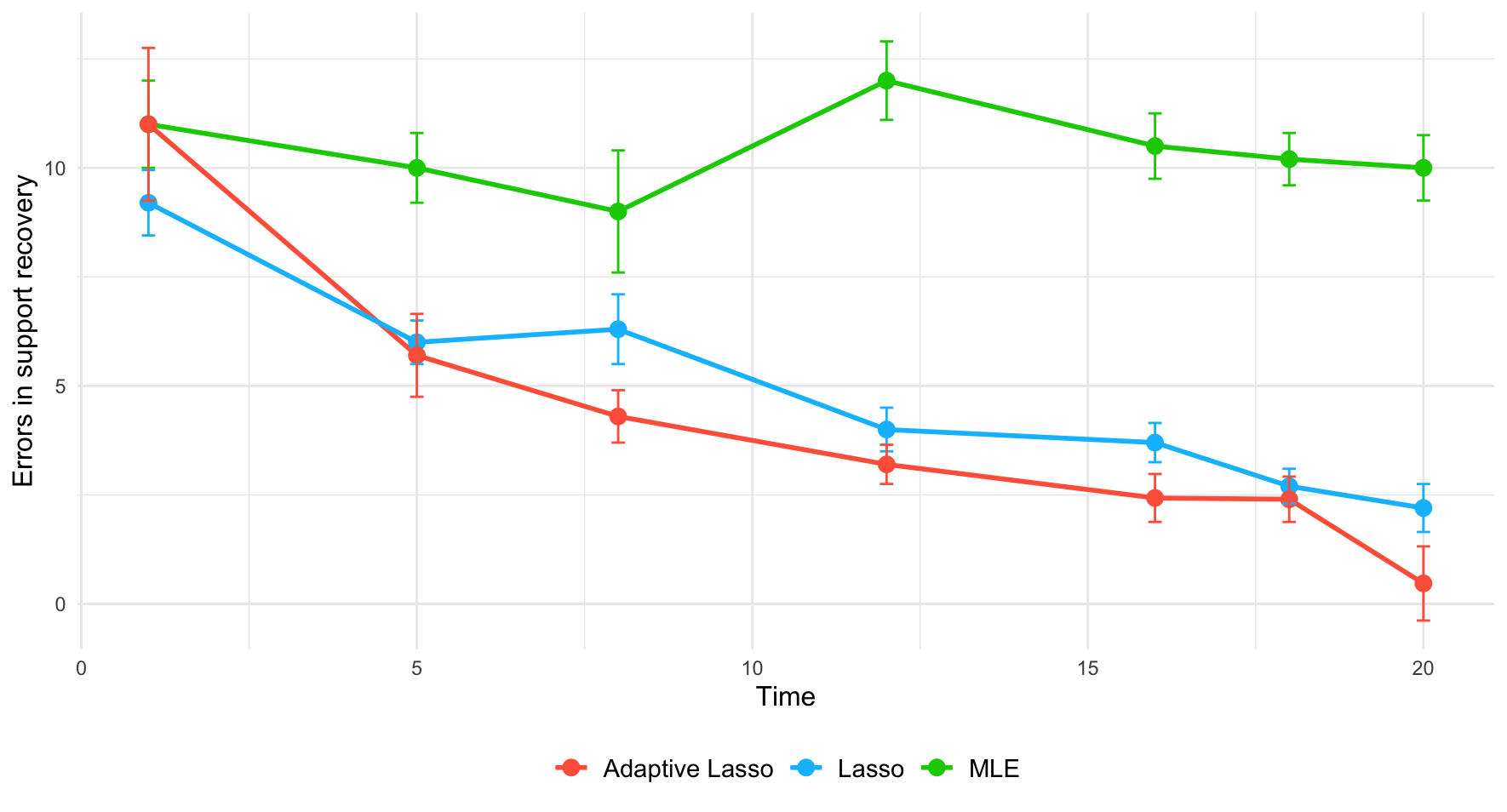} 
    \captionsetup{margin=1cm} 
    \caption{Comparison of the support recovery errors over time for the MLE, Lasso estimator, and adaptive Lasso, including the corresponding standard deviations.}
    \label{fig:SuppRecEvaluation} 
\end{figure}

In Figure \ref{fig:SuppRecEvaluation}, we evaluate the support recovery performance of the estimators by measuring the number of errors in identifying the parameter's support. A unit error corresponds to the misidentification of any component of the parameter vector. For this analysis, the process dimension is fixed at $d=5$, and the drift parameter dimension is set at $p=30$.
The estimators are evaluated over varying observation times across 25 iterations.

The results indicate that the MLE consistently exhibits a higher error rate, showing minimal improvement in support recovery as the observation time increases. Conversely, the Lasso-based techniques demonstrate significantly better performance, with the adaptive Lasso achieving near-perfect support recovery for sufficiently large observation windows. This underscores the adaptive Lasso's robustness and accuracy in identifying the true parameter's support, corroborating our theoretical findings.

Figures \ref{fig:L1error} and \ref{fig:L2error} further explore the performance of the estimators by comparing their $l_1$ and $l_2$ errors as the parameter dimension $p$
increases. Adopting a methodology akin to \cite{pina24}, the process dimension is fixed at $d=5$, and the observation time is $T=10$. For each of 25 iterations, the true parameter is generated with a sparsity of 0.75, where the non-zero values are uniformly distributed between 2 and 3. The Lasso and adaptive Lasso tuning parameters are selected using cross-validation.

The results confirm that both the $l_1$ and $l_2$ errors
increase with the parameter dimension for all estimators. However, the adaptive Lasso consistently achieves lower errors compared to the standard Lasso and MLE, further demonstrating its superior accuracy and efficiency in high-dimensional settings. These findings validate the practical advantages of the adaptive Lasso, particularly in challenging high-dimensional inference tasks.

 \begin{figure}[H] 
    \centering
    \includegraphics[width=0.9\textwidth, height=7cm]{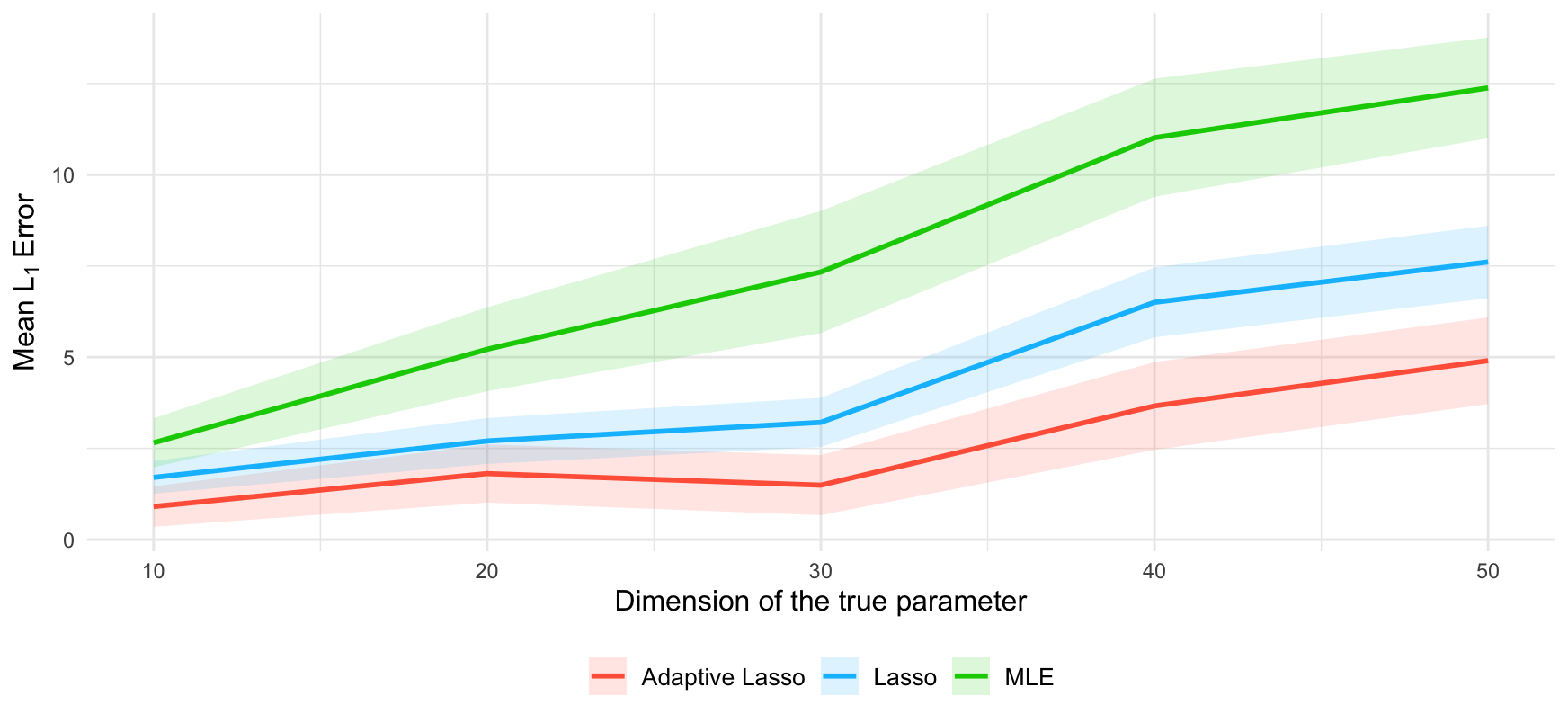} 
    \captionsetup{margin=1cm} 
    \caption{$l_1$ mean error for the MLE, the Lasso estimator and the adaptive Lasso $\pm$ one standard deviation.}
    \label{fig:L1error} 
\end{figure}

\begin{figure}[H] 
    \centering
    \includegraphics[width=0.9\textwidth, height=7cm]{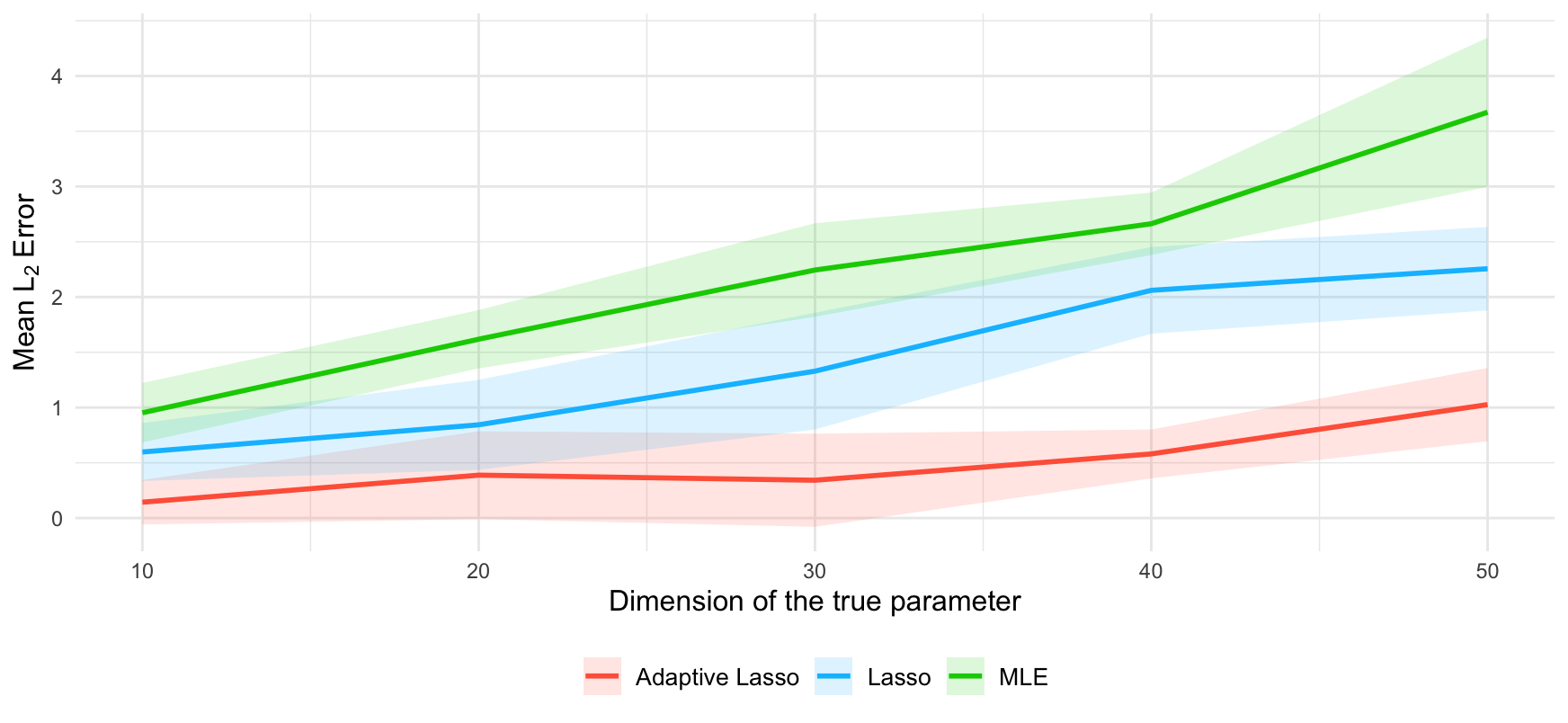} 
    \captionsetup{margin=1cm} 
    \caption{$l_2$ mean error for the MLE, the Lasso estimator and the adaptive Lasso $\pm$ one standard deviation.}
    \label{fig:L2error} 
\end{figure}

\noindent We observe that, for both metrics, the adaptive Lasso estimator exhibits a smaller error compared to the classical MLE and the standard Lasso estimator, particularly for larger values of $p$.


Finally, the last experiment provides empirical evidence supporting the asymptotic normality of the adaptive Lasso estimator. To demonstrate this, we simulate a 5-dimensional diffusion process of the form \eqref{simulatedProcess}, where the unknown parameter has dimension $p = 30$ and exhibits 80\% sparsity. For each time horizon $T$, we generate 100 trajectories of the process. 

\begin{figure}[H]
    \centering
    \includegraphics[width=0.9\textwidth, height=8.3cm]{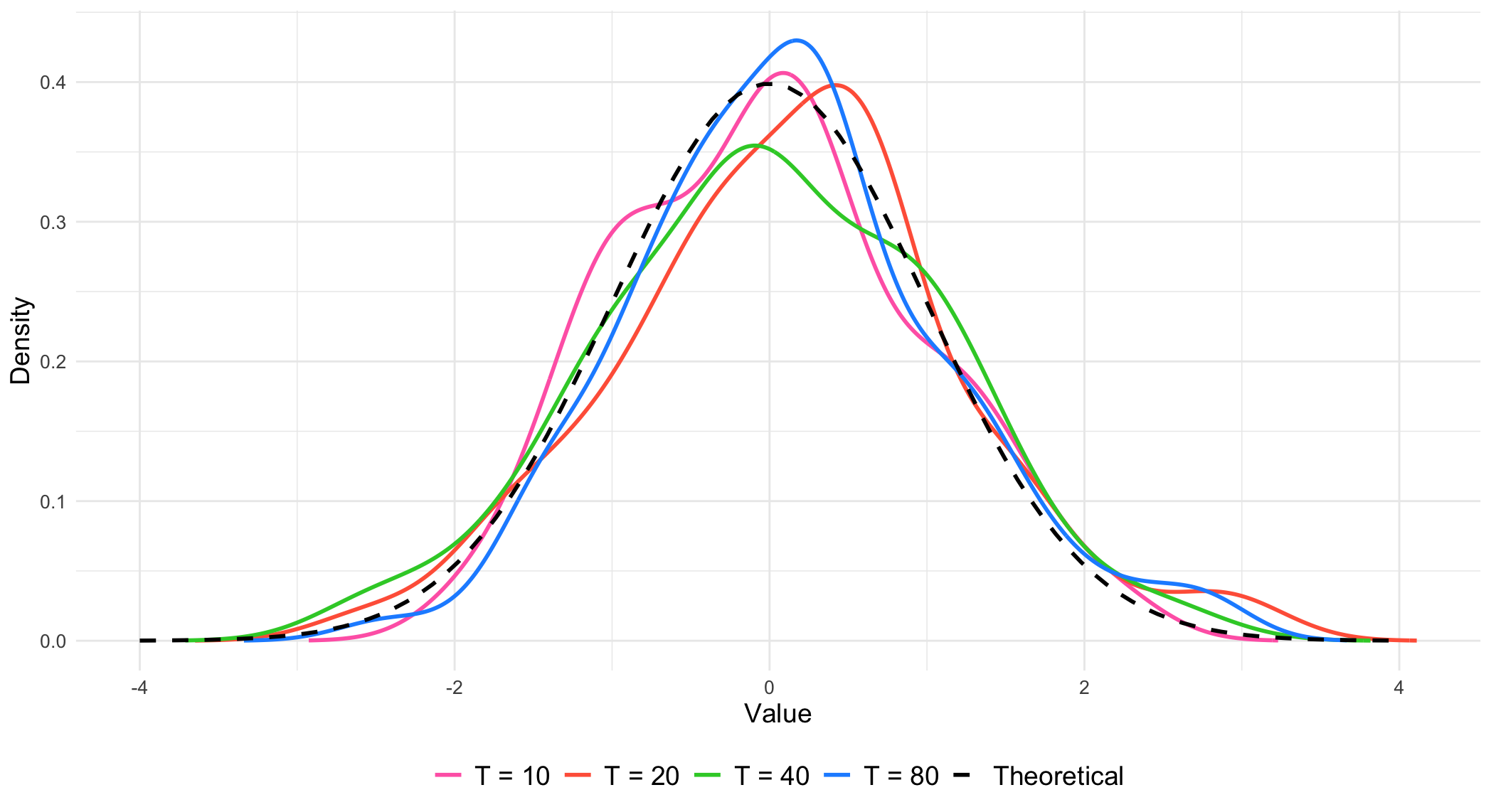} 
    \captionsetup{margin=1cm} 
    \caption{Empirical density distributions  at different time horizons compared with the theoretical standard Gaussian density.
}\label{fig: AN}
\end{figure}

For each trajectory, we apply the adaptive Lasso to estimate the unknown parameter and compute the statistic defined by the first term of equation \eqref{astheoremstat} with a  vector $\alpha$ of equal weights. After obtaining the 100 computed statistics for each value of $T$, we plot their empirical density distributions. These plots, illustrated in Figure \ref{fig: AN}, provide a visual comparison of the convergence of the estimator's distribution to the standard normal distribution as \(T\) increases, confirming the theoretical asymptotic normality of the Adaptive Lasso estimator.


We observe that as $T$ increases, the empirical density progressively converges towards the standard Gaussian distribution. This observation provides strong empirical evidence that the estimator exhibits asymptotic normality, in agreement with the theoretical results established in Theorem \ref{theoremAsNor}.

\section{Auxiliary results and proofs}\label{s: proofs}


\subsection{Control of the Fisher information matrix via concentration inequalities}

To establish the support recovery property, we need to control both the elements of the Fisher information matrix $C_T$ and the operator norm of its significant part $C_T^{\supp\supp}$. Since we impose assumptions on the expected value of $C_T$, i.e. matrix $C_\infty$, we use concentration inequalities that directly follow from Assumption \hyperref[assumption:C]{\((\mathcal{C})\)}. These inequalities allow us to show that with high probability as $T\to\infty$ the matrices $C_T$ and $C_\infty$ become  close to each other in the following sense.

\begin{lemma}\label{boundOperatorCovarianceMatrices}
   Under Assumptions~\hyperref[assumption:A1]{\((\mathcal{A}_1)\)}-\hyperref[assumption:A3]{\((\mathcal{A}_3)\)} and \hyperref[assumption:C]{\((\mathcal{C})\)}, the inequalities  

\begin{equation}\label{uConDif}
    \sup_{u\in \R^p: \| u \|_2=1}\P\left( u^{\star}(C_T - C_{\infty})u \geq x\right) \leq \exp\left(-\frac{Tx^2}{4\mathcal{K}}\right)
\end{equation}
and \begin{equation}\label{ijConDif}
    \max_{1\leq i,j\leq p}\P\left( |C_T^{ij} - C_{\infty}^{ij}| \geq x\right) \leq 6\exp\left(-\frac{Tx^2}{36\mathcal{K}}\right)
\end{equation}
hold for $x>0$. 

\end{lemma}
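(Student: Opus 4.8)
The plan is to derive both inequalities directly from Assumption~\hyperref[assumption:C]{\((\mathcal{C})\)} via Chernoff-type arguments, reducing the off-diagonal entries to diagonal ones by a polarization identity.

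First I would treat \eqref{uConDif}. For a fixed unit vector $u$, the quadratic form $u^{\star}(C_T - C_{\infty})u$ is precisely $H_u(X_{[0,T]})$ from \eqref{Hvdef}, so Markov's inequality applied to $e^{\mu H_u(X_{[0,T]})}$ together with the exponential moment bound \eqref{conineq} gives $\P(u^{\star}(C_T-C_{\infty})u \geq x) \leq \exp(-\mu x + \mathcal{K}\mu^2/T)$ for every $\mu > 0$; optimising in $\mu$ (the minimiser being $\mu = Tx/(2\mathcal{K})$) produces the exponent $-Tx^2/(4\mathcal{K})$, and the bound is uniform in $u$. Since \eqref{conineq} is assumed for all real $\mu$, the same computation with $-\mu$ controls the lower tail, yielding the two-sided estimate $\P(|u^{\star}(C_T-C_{\infty})u| \geq t) \leq 2\exp(-Tt^2/(4\mathcal{K}))$ for every unit $u$, which I will use in the second part.

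Next I would handle \eqref{ijConDif}. Using that $C_T$ and $C_{\infty}$ are symmetric and the polarization identity $(e_i+e_j)^{\star} M (e_i+e_j) = e_i^{\star} M e_i + e_j^{\star} M e_j + 2 M^{ij}$ with $M = C_T - C_{\infty}$, and writing $v := (e_i+e_j)/\sqrt{2}$ (a unit vector, so that $(e_i+e_j)^{\star} M (e_i+e_j) = 2 H_v(X_{[0,T]})$), I obtain
\[
C_T^{ij} - C_{\infty}^{ij} = H_v(X_{[0,T]}) - \tfrac12 H_{e_i}(X_{[0,T]}) - \tfrac12 H_{e_j}(X_{[0,T]}).
\]
On the event $\{|C_T^{ij}-C_{\infty}^{ij}| \geq x\}$ at least one of $|H_v(X_{[0,T]})|$, $\tfrac12|H_{e_i}(X_{[0,T]})|$, $\tfrac12|H_{e_j}(X_{[0,T]})|$ exceeds $x/3$, so a union bound combined with the two-sided tail estimate from the first step gives
\[
\P\!\left(|C_T^{ij}-C_{\infty}^{ij}| \geq x\right) \leq 2\exp\!\left(-\frac{Tx^2}{36\mathcal{K}}\right) + 4\exp\!\left(-\frac{Tx^2}{9\mathcal{K}}\right) \leq 6\exp\!\left(-\frac{Tx^2}{36\mathcal{K}}\right),
\]
and taking the maximum over $i,j$ finishes the argument.

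I do not expect a genuine conceptual difficulty; the entire effort is bookkeeping of constants. The one place to be careful is the threshold split in the union bound: the $\sqrt{2}$ normalisation forced by polarization is what turns the single-form rate $-Tx^2/(4\mathcal{K})$ into $-Tx^2/(36\mathcal{K})$, and the $x/3$ versus $2x/3$ thresholds must be chosen so that the dominant term carries exactly this rate while the prefactors sum to $6$. The same two steps would carry over verbatim under the weaker Assumption~\hyperref[assumption:Cprime]{\((\mathcal{C}')\)}, affecting only the explicit constants.
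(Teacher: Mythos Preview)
Your proposal is correct and follows essentially the same approach as the paper: a Chernoff/Markov argument using Assumption~\hyperref[assumption:C]{\((\mathcal{C})\)} for \eqref{uConDif}, then the polarization identity plus a three-term union bound for \eqref{ijConDif}. The only cosmetic difference is the allocation of thresholds in the union bound (you put $x/3$ on $|H_v|$ and $2x/3$ on $|H_{e_i}|,|H_{e_j}|$, whereas the paper does the reverse), but both splits yield the same final bound $6\exp(-Tx^2/(36\mathcal{K}))$.
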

\begin{proof}
 By applying Chebyshev's inequality under the given Assumption \hyperref[assumption:C]{\((\mathcal{C})\)}, we obtain that for all $\mu>0$ and $u\in \R^p$ satisfying $ \| u \|_2=1$, 
\begin{equation}
    \P\left(H_u(X_{[0,T]}) \geq x \right) \leq \E\left[\exp\left(\mu H_u(X_{[0,T]}) \right)\right]\exp(-\mu x)\leq 
    \exp\left(\mathcal{K} \mu^2 /T - \mu x\right). 
\end{equation}
If we now minimize this expression with respect to $\mu$, we obtain the first inequality \eqref{uConDif}.

To establish the second inequality, we can notice that

\begin{gather}
    \P\left( |C_T^{ij} - C_{\infty}^{ij}| \geq x\right) = \P\left( |e_i^{\star}(C_T - C_{\infty}) e_j| \geq x\right)  \\
     \leq\P\left( \frac{1}{2} \left|(e_i+e_j)^{\star}(C_T - C_{\infty}) (e_i+e_j) + e_i^{\star}(C_T - C_{\infty}) e_i + e_j^{\star}(C_T - C_{\infty}) e_j \right| \geq x\right)\\
     \leq \P\left( \frac{1}{4}\left|(e_i+e_j)^{\star}(C_T - C_{\infty}) (e_i+e_j) \right| \geq \frac{x}{3}\right)
     +\P\left( \left|e_i^{\star}(C_T - C_{\infty}) e_i\right| \geq \frac{x}{3}\right)\\
     +\P\left( \left|e_j^{\star}(C_T - C_{\infty}) e_j\right| \geq \frac{x}{3}\right)
     \leq 6 \exp\left(-\frac{Tx^2}{36\mathcal{K}}\right).
\end{gather}

\end{proof}

\noindent
Now, we state a proposition that provides a bound on the operator norm of $C_{\infty}^{-1}$. This result is a direct consequence of the previous lemma and Weyl's Theorem.

\begin{proposition}\label{propBoundInverse}
     Under Assumptions~\hyperref[assumption:A1]{\((\mathcal{A}_1)\)}-\hyperref[assumption:A3]{\((\mathcal{A}_3)\)} and \hyperref[assumption:C]{\((\mathcal{C})\)}, the inequality 
    \begin{equation}\label{CTdiff}
       \P\left( \nop{C_T^{\supp\supp } - C_\infty^{\supp\supp }} > x \right) \leq  6s^2\exp\left(-\frac{T x^2}{36\mathcal{K} s^2}\right)
    \end{equation}
holds for $x>0$. In particular,  
    \begin{equation}\label{CSSoperator}
       \P\left( \nop{(C_T^{\supp\supp })^{-1}} > \frac{2}{\tau_{\text{min}}} \right) \leq  12 s^2\exp\left(-\frac{T\tau_{\min}^2}{144 \mathcal{K}s^2}\right).
    \end{equation}

\end{proposition}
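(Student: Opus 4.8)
The plan is to derive \eqref{CTdiff} from Lemma~\ref{boundOperatorCovarianceMatrices} by a union bound over the entries of the $s\times s$ block $C_T^{\supp\supp}-C_\infty^{\supp\supp}$, and then to pass from the entrywise control to the operator norm, and finally to invoke Weyl's inequality together with Assumption~\hyperref[assumption:A3]{\((\mathcal{A}_3)\)} to obtain \eqref{CSSoperator}.

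First I would bound the operator norm of the symmetric matrix $D := C_T^{\supp\supp}-C_\infty^{\supp\supp}$ by (a multiple of) its largest entry in absolute value: for a symmetric $s\times s$ matrix one has $\nop{D} \le \max_{i,j}\sum_{k}|D_{ik}| \le s \max_{i,j}|D_{ij}|$, or more simply $\nop{D}\le \|D\|_F \le s\max_{i,j}|D_{ij}|$. Hence $\{\nop{D}>x\}\subseteq\{\exists\, i,j\in\supp:\ |D_{ij}|>x/s\}$. A union bound over the $s^2$ pairs, combined with the second inequality \eqref{ijConDif} of Lemma~\ref{boundOperatorCovarianceMatrices} applied with $x/s$ in place of $x$, gives
\[
\P\bigl(\nop{C_T^{\supp\supp}-C_\infty^{\supp\supp}}>x\bigr)\le s^2\cdot 6\exp\!\Bigl(-\frac{T(x/s)^2}{36\mathcal{K}}\Bigr)=6s^2\exp\!\Bigl(-\frac{Tx^2}{36\mathcal{K}s^2}\Bigr),
\]
which is exactly \eqref{CTdiff}.

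For the second claim, I would use Weyl's theorem: $l_{\min}(C_T^{\supp\supp})\ge l_{\min}(C_\infty^{\supp\supp})-\nop{C_T^{\supp\supp}-C_\infty^{\supp\supp}}$. Since by Assumption~\hyperref[assumption:A3]{\((\mathcal{A}_3)\)} we have $l_{\min}(C_\infty^{\supp\supp})=\tau_{\min}>0$ (note $C_\infty^{\supp\supp}=I^{\supp}$), on the event $\{\nop{C_T^{\supp\supp}-C_\infty^{\supp\supp}}\le \tau_{\min}/2\}$ it follows that $l_{\min}(C_T^{\supp\supp})\ge \tau_{\min}/2$, hence $C_T^{\supp\supp}$ is invertible with $\nop{(C_T^{\supp\supp})^{-1}}=1/l_{\min}(C_T^{\supp\supp})\le 2/\tau_{\min}$. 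Therefore $\{\nop{(C_T^{\supp\supp})^{-1}}>2/\tau_{\min}\}\subseteq\{\nop{C_T^{\supp\supp}-C_\infty^{\supp\supp}}>\tau_{\min}/2\}$, and applying \eqref{CTdiff} with $x=\tau_{\min}/2$ yields
\[
\P\Bigl(\nop{(C_T^{\supp\supp})^{-1}}>\tfrac{2}{\tau_{\min}}\Bigr)\le 6s^2\exp\!\Bigl(-\frac{T\tau_{\min}^2}{144\mathcal{K}s^2}\Bigr)\le 12s^2\exp\!\Bigl(-\frac{T\tau_{\min}^2}{144\mathcal{K}s^2}\Bigr),
\]
which is \eqref{CSSoperator} (the factor $12$ rather than $6$ leaves room for the crude $\nop{D}\le\|D\|_F$ step; if one prefers the tighter route one can absorb the constant, but matching the stated constant $12$ is harmless). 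The argument is essentially routine; the only point requiring a little care is the passage from entrywise to operator-norm control, where one must be careful that the bound $\nop{D}\le s\max_{ij}|D_{ij}|$ (or the Frobenius bound) is dimension-correct so that the exponent ends up with the $s^2$ in the denominator as claimed — this is the step I would double-check, since a suboptimal bound there would change the constants in the exponent.
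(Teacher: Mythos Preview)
Your proposal is correct and follows essentially the same route as the paper: entrywise control via Lemma~\ref{boundOperatorCovarianceMatrices} plus a union bound for \eqref{CTdiff}, then Weyl's inequality together with Assumption~\hyperref[assumption:A3]{\((\mathcal{A}_3)\)} for \eqref{CSSoperator}. The only cosmetic difference is in the second part: the paper splits $\{\nop{(C_T^{\supp\supp})^{-1}}>2/\tau_{\min}\}$ into the two events $\{\nop{D}<\tau_{\min}\}$ and its complement and bounds each by $\P(\nop{D}>\tau_{\min}/2)$, which is where the factor $12=2\cdot 6$ actually comes from; your direct inclusion argument yields the sharper constant $6$, and your parenthetical explanation for the $12$ is slightly off, but this is harmless.
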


\begin{proof} We obtain for any $x>0$:

\begin{gather}
    \P\left( \nop{C_T^{\supp\supp } - C_\infty^{\supp\supp }} > x \right) 
    \leq \P \left( \max_{1\leq i \leq s}\sum_{j=1}^s |C_T^{ij} - C_\infty^{ij}|^2>\frac{x^2}{s}\right)\\
    \leq 
    s \max_{1\leq i \leq s} \P \left(\sum_{j=1}^s |C_T^{ij} - C_\infty^{ij}|^2>\frac{x^2}{s}\right)
    \leq s^2 \max_{1\leq i,j \leq s}\P\left(  |C_T^{ij} - C_\infty^{ij}|>\frac{x}{s}\right)\\ 
    \leq 6 s^2 \exp\left(-\frac{Tx^2}{36\mathcal{K}s^2}\right), 
\end{gather}
which establishes inequality \eqref{CTdiff}. 

Now since $C_{T}^{\supp \supp}$ and $C_{\infty}^{\supp \supp}$ are symmetric matrices and $C_{\infty}^{\supp \supp}$ is  positive definite due to the Assumption \hyperref[assumption:C]{\((\mathcal{C})\)}, we can use   Weyl's Theorem (can be consulted in Section 4.3 in \cite{Horn85}) to control the minimal eigenvalue of $C_{T}^{\supp \supp}$ since
\begin{equation}
    l_{\min}(C_{\infty}^{\supp \supp}) - l_{\max}(C_T^{\supp\supp } - C_{\infty}^{\supp \supp}) \leq l_{\min}(C_T^{\supp\supp }) \leq l_{\min}(C_{\infty}^{\supp \supp}) + l_{\max}(C_T^{\supp\supp } - C_{\infty}^{\supp \supp}).
\end{equation}
Consequently, 
\begin{equation}
    l_{\min}(C_T^{\supp\supp }) \geq l_{\min}(C_{\infty}^{\supp \supp}) - \nop{C_T^{\supp\supp } - C_{\infty}^{\supp \supp}} = \tau_{\min} - \nop{C_T^{\supp\supp } - C_{\infty}^{\supp \supp}}. 
\end{equation}
Therefore, we have that 
\begin{gather}
    \P\left( \nop{(C_T^{\supp\supp })^{-1}} > x  \right) \leq 
    \P\left( \nop{(C_T^{\supp\supp })^{-1}} > x ,  \nop{C_T^{\supp\supp } - C_{\infty}^{\supp \supp}} < \tau_\text{min}\right) \\
    + 
    \P\left( \nop{C_T^{\supp\supp } - C_{\infty}^{\supp \supp}} \geq \tau_\text{min}\right)   \leq   
    \P\left(\frac{1}{\tau_{\min} - \nop{C_T^{\supp\supp } - C_{\infty}^{\supp \supp}}} > x,  \nop{C_T^{\supp\supp } - C_{\infty}^{\supp \supp}} < \tau_\text{min} \right)\\
    + 
    \P\left( \nop{C_T^{\supp\supp } - C_{\infty}^{\supp \supp}} \geq \tau_\text{min}\right) 
    \leq 2 \P\left(\nop{C_T^{\supp\supp } - C_{\infty}^{\supp \supp}} >  \tau_{\min} - \frac{1}{x} \right)\\
    \leq 12 s^2\exp\left(-\frac{T(\tau_{\min} - 1/x)^2}{36 \mathcal{K}s^2}\right), 
\end{gather}
where the last inequality follows from \eqref{CTdiff} and holds for $x>1/\tau_{\text{min}}$. Finally, taking $x=2/\tau_{\text{min}}$, we demonstrate inequality \eqref{CSSoperator}.
\end{proof}

The next proposition provides a  bound for the $l_2$-norm of $C_T^{\supp^C \supp}$. 

\begin{proposition}\label{boundL2correlated}  
Under Assumptions~\hyperref[assumption:A1]{\((\mathcal{A}_1)\)}-\hyperref[assumption:A3]{\((\mathcal{A}_3)\)} and \hyperref[assumption:C]{\((\mathcal{C})\)}, the inequality 
    \begin{equation}\label{CTSCSineq}
        \P\left(\|(C_T^{\supp^C \supp})_j\|_2  > x\right) \leq  6 s \exp\left(-\frac{T (x - \mathfrak{L})^2}{36\mathcal{K}s}\right)
    \end{equation}
    holds for $x>\mathfrak{L}$ and $1\leq j \leq p-s$. Moreover, 
    \begin{equation}\label{CTCTineq}
        \P\left(\|(C_T^{\supp^C \supp})_j (C_T^{\supp \supp})^{-1}\|_2  > \frac{2\mathfrak{L}}{\tau_{\text{min}}}+\frac{1}{\sqrt{s}}\right) \leq  6s(2s+1) \exp\left(-\frac{T \tau_{\text{min}}^2}{144\mathcal{K}s^2}\right).
    \end{equation}
\end{proposition}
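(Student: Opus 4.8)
\textbf{Proof proposal for Proposition \ref{boundL2correlated}.}

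The plan is to decompose $(C_T^{\supp^C\supp})_j$ as $(C_\infty^{\supp^C\supp})_j + \bigl((C_T^{\supp^C\supp})_j - (C_\infty^{\supp^C\supp})_j\bigr)$, bound the first summand deterministically using the definition of $\mathfrak{L}$, and control the fluctuation term entrywise via Lemma \ref{boundOperatorCovarianceMatrices}. Concretely, $\|(C_\infty^{\supp^C\supp})_j\|_2\le\mathfrak{L}$ by definition, so for $x>\mathfrak{L}$ a triangle inequality gives $\{\|(C_T^{\supp^C\supp})_j\|_2>x\}\subseteq\{\|(C_T^{\supp^C\supp})_j-(C_\infty^{\supp^C\supp})_j\|_2>x-\mathfrak{L}\}$. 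I then pass from the Euclidean norm of the $s$-dimensional vector to its coordinates: if the $\ell_2$-norm exceeds $x-\mathfrak{L}$, at least one of the $s$ entries exceeds $(x-\mathfrak{L})/\sqrt{s}$, so a union bound over the $s$ coordinates together with \eqref{ijConDif} applied with threshold $(x-\mathfrak{L})/\sqrt{s}$ yields
\[
\P\bigl(\|(C_T^{\supp^C\supp})_j\|_2>x\bigr)\le s\cdot 6\exp\!\left(-\frac{T(x-\mathfrak{L})^2/s}{36\mathcal{K}}\right)=6s\exp\!\left(-\frac{T(x-\mathfrak{L})^2}{36\mathcal{K}s}\right),
\]
which is \eqref{CTSCSineq}.

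For \eqref{CTCTineq}, I submultiplicatively split $\|(C_T^{\supp^C\supp})_j(C_T^{\supp\supp})^{-1}\|_2\le\|(C_T^{\supp^C\supp})_j\|_2\,\nop{(C_T^{\supp\supp})^{-1}}$ and work on the event where both factors are controlled. On the complement of $\{\nop{(C_T^{\supp\supp})^{-1}}>2/\tau_{\min}\}$ and $\{\|(C_T^{\supp^C\supp})_j\|_2>\mathfrak{L}+\tfrac{\tau_{\min}}{2\sqrt s}\}$, the product is at most $\bigl(\mathfrak{L}+\tfrac{\tau_{\min}}{2\sqrt s}\bigr)\cdot\tfrac{2}{\tau_{\min}}=\tfrac{2\mathfrak{L}}{\tau_{\min}}+\tfrac1{\sqrt s}$. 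Hence
\[
\P\!\left(\|(C_T^{\supp^C\supp})_j(C_T^{\supp\supp})^{-1}\|_2>\tfrac{2\mathfrak{L}}{\tau_{\min}}+\tfrac1{\sqrt s}\right)\le\P\!\left(\|(C_T^{\supp^C\supp})_j\|_2>\mathfrak{L}+\tfrac{\tau_{\min}}{2\sqrt s}\right)+\P\!\left(\nop{(C_T^{\supp\supp})^{-1}}>\tfrac2{\tau_{\min}}\right).
\]
Applying \eqref{CTSCSineq} with $x=\mathfrak{L}+\tau_{\min}/(2\sqrt s)$, so that $(x-\mathfrak{L})^2=\tau_{\min}^2/(4s)$, bounds the first term by $6s\exp\bigl(-T\tau_{\min}^2/(144\mathcal{K}s^2)\bigr)$, and \eqref{CSSoperator} bounds the second by $12s^2\exp\bigl(-T\tau_{\min}^2/(144\mathcal{K}s^2)\bigr)$. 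Adding these and using $6s+12s^2\le 6s(2s+1)$ gives exactly \eqref{CTCTineq}.

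The only genuinely delicate point is bookkeeping the constants so that both exponential tails share the identical rate $T\tau_{\min}^2/(144\mathcal{K}s^2)$; this forces the choice $x-\mathfrak{L}=\tau_{\min}/(2\sqrt s)$ in \eqref{CTSCSineq}, which is precisely what produces the stated radius $2\mathfrak{L}/\tau_{\min}+1/\sqrt s$. Everything else is a routine combination of the triangle inequality, the norm-to-coordinates reduction, submultiplicativity of the operator norm, and the already-established tail bounds in Lemma \ref{boundOperatorCovarianceMatrices} and Proposition \ref{propBoundInverse}; no new probabilistic input is needed.
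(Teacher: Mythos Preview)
Your proposal is correct and follows essentially the same approach as the paper: the same triangle-inequality decomposition with the deterministic bound $\mathfrak{L}$, the same coordinatewise union bound feeding into \eqref{ijConDif} for \eqref{CTSCSineq}, and the same submultiplicative split with the threshold choice $x=\mathfrak{L}+\tau_{\min}/(2\sqrt s)$ combined with \eqref{CSSoperator} for \eqref{CTCTineq}. The constants and the final combination $6s+12s^2=6s(2s+1)$ match the paper exactly.
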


\begin{proof}
We observe that 
\begin{gather}
    \P\left(\|(C_T^{\supp^C \supp})_j\|_2  > x\right) 
    \leq \P\left( \|(C_T^{\supp^C \supp} - C_\infty^{\supp^C \supp})_j\|_2+\|(C_\infty^{\supp^C \supp})_j\|_2 > x \right)\\ 
    \leq 
    \P\left( \sum_{i=1}^{s} |C_T^{j+s,i} - C_\infty^{j+s,i} |^2  > (x-\mathfrak{L})^2\right) \leq s \max_{1\leq i \leq s}\P\left(|C_T^{j+s,i}-C_\infty^{j+s,i}| > \frac{x-\mathfrak{L}}{\sqrt{s}}\right)\\
    \leq s \max_{1\leq i \leq s}\P\left(|C_T^{j+s,i} - C_\infty^{j+s,i}| > \frac{x - \mathfrak{L}}{\sqrt{s}}\right) \leq 6 s  \exp\left(-\frac{T (x - \mathfrak{L})^2}{36\mathcal{K}s}\right), 
\end{gather}
where the last inequality holds due to Lemma \ref{boundOperatorCovarianceMatrices}. To prove the second inequality, we start with 

\begin{gather}
    \P\left(\|(C_T^{\supp^C \supp})_j (C_T^{\supp \supp})^{-1}\|_2  > \frac{2\mathfrak{L}}{\tau_{\text{min}}}+\frac{1}{\sqrt{s}}\right) \leq
    \P\left(\|(C_T^{\supp^C \supp})_j \|_2  > \mathfrak{L} + \frac{\tau_\text{min}}{2\sqrt{s}}\right) \\
    + 
    \P\left(\nop{(C_T^{\supp \supp})^{-1}}  >\frac{2}{\tau_{\text{min}}}\right), 
\end{gather}
and \eqref{CTCTineq} follows immediately from \eqref{CSSoperator} and \eqref{CTSCSineq}.  

\end{proof}

\subsection{Bound for  the martingale term}
Another crucial step in our analysis involves deriving a bound for the martingale component $\epsilon_T$. The result is stated as follows:

\begin{proposition}\label{propMTGbernstein}

Under Assumptions~\hyperref[assumption:A1]{\((\mathcal{A}_1)\)}-\hyperref[assumption:A3]{\((\mathcal{A}_3)\)} and \hyperref[assumption:C]{\((\mathcal{C})\)}, the inequalities 
    \begin{equation}\label{epsilonineq}
        \P\left(|\epsilon_T^j|> x \right) 
    \leq  2 \exp\left(-\frac{Tx^2}{2(\sqrt{\mathcal{K}}+\mathfrak{M})}\right) +  6\exp\left(-\frac{T}{36}\right).
    \end{equation}
and
\begin{equation}\label{epsilonSineq}
        \P\left(\|\epsilon_{T,\supp}\|_2> x \right) 
    \leq  2 s\exp\left(-\frac{Tx^2}{2s(\sqrt{\mathcal{K}}+\mathfrak{M})}\right) +  6s\exp\left(-\frac{T}{36}\right).
    \end{equation}
hold for all $x>0$. 
\end{proposition}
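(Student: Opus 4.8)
The plan is to bound the $j$-th component $\epsilon_T^j = \frac{1}{T}\int_0^T \phi_j(X_t)^\star dW_t$ via an exponential martingale argument, where the quadratic variation of this martingale is precisely $\frac{1}{T^2}\int_0^T \|\phi_j(X_t)\|_2^2\, dt = \frac{1}{T}\,C_T^{jj}$. First I would introduce, for a parameter $\mu > 0$, the exponential supermartingale
\begin{equation}
\mathcal{E}_t := \exp\left( \mu \int_0^t \phi_j(X_s)^\star dW_s - \frac{\mu^2}{2}\int_0^t \|\phi_j(X_s)\|_2^2\, ds \right),
\end{equation}
which has expectation at most $1$. The standard device is to split on the event where the quadratic variation is controlled: on $\{C_T^{jj} \le \mathfrak{M} + \sqrt{\mathcal{K}}\}$ we can drop the (now bounded) quadratic-variation term in the exponent at a controlled cost, apply Markov's inequality to $\mathcal{E}_T$, and optimize over $\mu$ to get a term of the form $\exp\!\big(-Tx^2 / (2(\sqrt{\mathcal{K}}+\mathfrak{M}))\big)$; on the complementary event we use the concentration bound for $C_T^{jj}$ from Lemma~\ref{boundOperatorCovarianceMatrices} — specifically \eqref{ijConDif} with $C_\infty^{jj} \le \mathfrak{M}$ — to see that $\P(C_T^{jj} \ge \mathfrak{M} + \sqrt{\mathcal{K}}) \le \P(|C_T^{jj} - C_\infty^{jj}| \ge \sqrt{\mathcal{K}}) \le 6\exp(-T\mathcal{K}/(36\mathcal{K})) = 6\exp(-T/36)$, which accounts for the second summand. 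Summing the two contributions via a union bound over the two events yields \eqref{epsilonineq}.

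For the second inequality \eqref{epsilonSineq}, I would reduce the $\ell_2$-norm bound to the componentwise one: since $\|\epsilon_{T,\supp}\|_2 > x$ implies $\max_{j\in\supp}|\epsilon_T^j| > x/\sqrt{s}$, a union bound over the $s$ coordinates in $\supp$ gives
\begin{equation}
\P\left( \|\epsilon_{T,\supp}\|_2 > x \right) \le s\,\max_{j\in\supp} \P\left( |\epsilon_T^j| > \frac{x}{\sqrt{s}} \right),
\end{equation}
and substituting \eqref{epsilonineq} with $x$ replaced by $x/\sqrt{s}$ produces the stated bound, the factor $\sqrt{s}$ in the denominator of the exponent becoming the $s$ inside $2s(\sqrt{\mathcal{K}}+\mathfrak{M})$ after squaring.

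The main obstacle is the first step: controlling the exponential moment of a continuous martingale whose quadratic variation is itself random. The clean sub-Gaussian form of the bound only emerges after conditioning on the good event for $C_T^{jj}$, and one must be careful that the splitting is done correctly — the bound $\E[\mathcal{E}_T] \le 1$ holds unconditionally (by Fatou / localization if needed, since $\mathcal{E}$ is a nonnegative local martingale), but turning $\mathcal{E}_T \ge \exp(\mu x T - \tfrac{\mu^2}{2}(\mathfrak{M}+\sqrt{\mathcal{K}})T)$ into a usable tail bound requires the event $\{C_T^{jj} \le \mathfrak{M}+\sqrt{\mathcal{K}}\}$ to hold, so one actually bounds $\P(\{|\epsilon_T^j| > x\} \cap \{C_T^{jj} \le \mathfrak{M}+\sqrt{\mathcal{K}}\})$ by the exponential martingale estimate and the remaining piece by Lemma~\ref{boundOperatorCovarianceMatrices}. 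The optimization over $\mu$ (giving $\mu = x/(\mathfrak{M}+\sqrt{\mathcal{K}})$) and the minor bookkeeping of the constants is then routine.
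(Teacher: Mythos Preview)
Your proposal is correct and follows essentially the same route as the paper: the paper simply cites ``Bernstein's inequality for continuous martingales'' as a black box where you spell out the underlying exponential-martingale argument, but the splitting on $\{C_T^{jj}\le \sqrt{\mathcal{K}}+\mathfrak{M}\}$, the use of Lemma~\ref{boundOperatorCovarianceMatrices} on the complement, the optimization over $\mu$, and the union bound over $j\in\supp$ for \eqref{epsilonSineq} are all exactly as in the paper.
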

\begin{proof}
We observe that the quadratic variation of the martingale $\epsilon_T^j$ can be expressed as 
\begin{equation}
    \langle \epsilon^j\rangle_T = \frac{1}{T^2}\int_0^T\|\phi_j(X_t)\|_2^2dt = \frac{1}{T}C_{T}^{jj}.
\end{equation}
Then, combining Bernstein’s inequality for continuous martingales with inequality \eqref{ijConDif} from Lemma \ref{boundOperatorCovarianceMatrices},  we obtain that for all $x,y>0$,
\begin{gather}
    \P\left(|\epsilon_T^j|> x \right) \leq \P\left( |\epsilon_T^j|> x , \langle M^j\rangle_T  <  \frac{y}{T} \right) + \P\left( \langle M^j\rangle_T  \geq  \frac{y}{T} \right)\\
    \leq 2 \exp\left(-\frac{Tx^2}{2y}\right) + \P\left( C_T^{jj}  \geq  y \right) 
    \leq 2 \exp\left(-\frac{Tx^2}{2y}\right) + \P\left( |C_T^{jj}-C_\infty^{jj}|\geq  y - \mathfrak{M}\right)\\
    \leq 2 \exp\left(-\frac{Tx^2}{2y}\right) +  6\exp\left(-\frac{T(y -\mathfrak{M})^2}{36\mathcal{K}}\right).    
\end{gather}
Taking $y = \sqrt{\mathcal{K}}+\mathfrak{M}$ finishes the proof of \eqref{epsilonineq}. Finally, we can notice that 

\begin{gather}
   \P\left(\|\epsilon_{T,\supp}\|_2> x \right) \leq s \max_{1 \leq j \leq s} \P\left(|\epsilon_T^j|> \frac{x}{\sqrt{s}} \right)\leq
   2 s\exp\left(-\frac{Tx^2}{2s(\sqrt{\mathcal{K}}+\mathfrak{M})}\right) +  6s\exp\left(-\frac{T}{36}\right).
\end{gather}
\end{proof}

\subsection{Bounds on probabilities of key events} 
In this subsection, we present the lemmas that demonstrate the conditions, which the model parameters must satisfy to ensure sign consistency for the adaptive Lasso estimator.
\begin{lemma}\label{LemmaB1}
   Under Assumptions~\hyperref[assumption:A1]{\((\mathcal{A}_1)\)}-\hyperref[assumption:A3]{\((\mathcal{A}_3)\)} and \hyperref[assumption:C]{\((\mathcal{C})\)}, it holds that
   $\P(B_1)\to 0$  if 
\begin{equation}\label{lemmaB1condition}
 \left( \frac{\sqrt{\mathcal{K}}+\mathfrak{M}}{(\theta_{0,\supp}^\text{min})^2}+ \frac{\tau_{\text{min}}^2}{s} + \mathcal{K} s \right) \frac{s\log{s}}{\tau_{\text{min}}^2 T} \to 0.
\end{equation}

\end{lemma}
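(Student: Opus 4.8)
The plan is to dominate the event $B_1$ by two events that are already under control from the previous subsections: one on the size of the martingale vector $\epsilon_{T,\supp}$ and one on the operator norm of $(C_T^{\supp\supp})^{-1}$. For every $j\in\supp$ one has the crude bound
\[
\left|\left((C_T^{\supp\supp})^{-1}\epsilon_{T,\supp}\right)^{j}\right|\;\le\;\nop{(C_T^{\supp\supp})^{-1}}\,\|\epsilon_{T,\supp}\|_2 ,
\]
and $|\theta_0^j|\ge\theta_{0,\supp}^{\text{min}}$ for $j\in\supp$, so on the event $\{\nop{(C_T^{\supp\supp})^{-1}}\le 2/\tau_{\text{min}}\}$ the occurrence of $B_1$ forces $\|\epsilon_{T,\supp}\|_2\ge\tau_{\text{min}}\theta_{0,\supp}^{\text{min}}/4$. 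Consequently
\[
\P(B_1)\;\le\;\P\!\left(\|\epsilon_{T,\supp}\|_2\ge\frac{\tau_{\text{min}}\theta_{0,\supp}^{\text{min}}}{4}\right)+\P\!\left(\nop{(C_T^{\supp\supp})^{-1}}>\frac{2}{\tau_{\text{min}}}\right).
\]

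First I would bound the second probability directly by Proposition~\ref{propBoundInverse}, which yields $12s^2\exp\!\left(-T\tau_{\text{min}}^2/(144\mathcal{K}s^2)\right)$. Then I would apply the tail estimate \eqref{epsilonSineq} of Proposition~\ref{propMTGbernstein} with $x=\tau_{\text{min}}\theta_{0,\supp}^{\text{min}}/4$ (replacing the non-strict inequality by a strict one with threshold $x/2$ costs only numerical constants), obtaining a bound of the form $2s\exp\!\left(-T\tau_{\text{min}}^2(\theta_{0,\supp}^{\text{min}})^2/(C s(\sqrt{\mathcal{K}}+\mathfrak{M}))\right)+6s\exp(-T/36)$ for some numerical constant $C$.

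It then remains to verify that each of these three exponential terms tends to $0$ under \eqref{lemmaB1condition}. Taking logarithms, this amounts to $\frac{s(\sqrt{\mathcal{K}}+\mathfrak{M})\log s}{\tau_{\text{min}}^2(\theta_{0,\supp}^{\text{min}})^2 T}\to0$, $\frac{\log s}{T}\to0$, and $\frac{\mathcal{K}s^2\log s}{\tau_{\text{min}}^2 T}\to0$; these are precisely the three summands $\frac{\sqrt{\mathcal{K}}+\mathfrak{M}}{(\theta_{0,\supp}^{\text{min}})^2}$, $\frac{\tau_{\text{min}}^2}{s}$ and $\mathcal{K}s$, each multiplied by the common factor $\frac{s\log s}{\tau_{\text{min}}^2 T}$, so the single displayed hypothesis \eqref{lemmaB1condition} implies all three. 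Since the argument is just a split into two events followed by two already-proved propositions, there is no substantive obstacle; the only points demanding a little care are choosing the threshold $\tau_{\text{min}}\theta_{0,\supp}^{\text{min}}/4$ so that it absorbs the factor $2$ lost in bounding $\nop{(C_T^{\supp\supp})^{-1}}$, and matching the exponents term-by-term against the hypothesis.
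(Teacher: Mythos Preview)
Your proposal is correct and follows essentially the same approach as the paper: split $B_1$ via the event $\{\nop{(C_T^{\supp\supp})^{-1}}\le 2/\tau_{\text{min}}\}$, then invoke Proposition~\ref{propBoundInverse} and Proposition~\ref{propMTGbernstein} and match the resulting exponents against \eqref{lemmaB1condition}. The only (inessential) difference is that the paper first applies a union bound over $j\in\supp$, picking up an extra factor~$s$ in front of each probability, whereas you bound $B_1$ directly; your version is marginally tighter but the conclusion and the required condition are identical.
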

\begin{proof}
By definition, 
\begin{equation}
    B_1 = \left\{\exists j \in \supp :  \left|
    \left((C_T^{\supp\supp })^{-1} \epsilon_{T,\supp}\right)^j\right| \geq \frac{|\theta_0^j|}{2}\right\}.
\end{equation}
Consequently, applying inequalities \eqref{CSSoperator} and \eqref{epsilonSineq}, we obtain that 
\begin{gather}
    \P(B_1) \leq s \max_{j \in \supp} \P\left(\left|
    \left((C_T^{\supp\supp })^{-1} \epsilon_{T,\supp}\right)^j\right| \geq \frac{|\theta_0^j|}{2} \right) 
    \leq s  \P\left(\nop{(C_T^{\supp\supp})^{-1}} \|\epsilon_{T,\supp}\|_2\geq \frac{\theta_{0,\supp}^\text{min}}{2} \right) \nonumber \\ \nonumber
    \leq s \P\left(\nop{(C_T^{\supp\supp})^{-1}} \|\epsilon_{T,\supp}\|_2\geq \frac{\theta_{0,\supp}^\text{min}}{2}, \nop{(C_T^{\supp\supp})^{-1}}  \leq \frac{2}{\tau_{\text{min}}}  \right) 
    + s \P\left( \nop{(C_T^{\supp\supp})^{-1}}  > \frac{2}{\tau_{\text{min}}}  \right)\\  
    \leq s \P\left(\|\epsilon_{T,\supp}\|_2\geq \frac{\theta_{0,\supp}^\text{min}\tau_{\text{min}}}{4}  \right) 
    + s \P\left( \nop{(C_T^{\supp\supp})^{-1}}  > \frac{2}{\tau_{\text{min}}}  \right) \\
    \leq 2 s^2\exp\left(-\frac{T(\theta_{0,\supp}^\text{min})^2\tau_{\text{min}}^2}{32s(\sqrt{\mathcal{K}}+\mathfrak{M})}\right) +  6s^2\exp\left(-\frac{T}{36}\right)
    +12 s^3\exp\left(-\frac{T\tau_{\min}^2}{144 \mathcal{K}s^2}\right), 
\end{gather}
where all three terms converge to zero under the condition \eqref{lemmaB1condition}. 
\end{proof}

Let recall that $u = (|\eta_0^j|^{-1}\sign(\theta_0), j \in \supp)^{\star}$ and $\widetilde{u} = (w^j\sign(\theta_0^j), j \in \supp)^{\star}$, where $w^j = |\widetilde{\theta}^j|^{-1}$  is derived from the pre-estimator $\widetilde{\theta}$. The following lemma establishes a probabilistic bound on the Euclidean norm of $\widetilde{u}$. 

\begin{lemma}\label{uhat}
Under Assumption \hyperref[assumption:B1]{\((\mathcal{B}_1)\)}, the following equalities hold 
    \begin{equation}\label{uasymp}
        \|\widetilde{u}\|_2 = (1 + o_{\P}(1)) \mathcal{M}_{1,T}\quad \text{and}\quad \max_{j\notin \supp} \| |\widetilde{\theta^j}|\widetilde{u} - |\eta_0^j| u  \|_2 = o_\P(1). 
    \end{equation}
\end{lemma}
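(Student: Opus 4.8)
\textbf{Proof plan for Lemma \ref{uhat}.}

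The plan is to exploit the $r_T$-consistency from Assumption \hyperref[assumption:B1]{\((\mathcal{B}_1)\)} to show that the weights $w^j = |\widetilde{\theta}^j|^{-1}$ are uniformly close (after suitable rescaling) to $|\eta_0^j|^{-1}$ on the support, and negligible off the support. Concretely, from \eqref{rTcons} we have $r_T\|\widetilde\theta - \eta_0\|_\infty = O_\P(1)$, so there is a random variable $Z_T = O_\P(1)$ with $|\widetilde\theta^j - \eta_0^j| \leq Z_T/r_T$ for all $j$. First I would handle the support coordinates: for $j\in\supp$, write
\begin{equation}
\left| \frac{1}{|\widetilde\theta^j|} - \frac{1}{|\eta_0^j|} \right| = \frac{\bigl| |\eta_0^j| - |\widetilde\theta^j| \bigr|}{|\widetilde\theta^j|\,|\eta_0^j|} \leq \frac{Z_T/r_T}{|\eta_0^j|\bigl(|\eta_0^j| - Z_T/r_T\bigr)},
\end{equation}
valid on the event $\{Z_T/r_T < \eta_{0,\supp}^{\text{min}}/2\}$, say, which has probability tending to one because $\mathcal{M}_{1,T}=o(r_T)$ together with the bound on $\bigl(\sum_{j\in\supp}|\eta_0^j|^{-2}\bigr)^{1/2}\le\mathcal M_{1,T}$ forces $1/\eta_{0,\supp}^{\text{min}} \le \mathcal{M}_{1,T} = o(r_T)$. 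Squaring and summing over $j\in\supp$, and comparing with $\sum_{j\in\supp}|\eta_0^j|^{-2}$ and $\sum_{j\in\supp}|\eta_0^j|^{-4}$, I would bound $\|\widetilde u - u\|_2$ by a term of order $(Z_T/r_T)\bigl(\sum_{j\in\supp}|\eta_0^j|^{-4}\bigr)^{1/2}$ up to the multiplicative factor $(1-o_\P(1))^{-1}$.

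The key observation is that the second summand in the definition of $\mathcal{M}_{1,T}$ in \hyperref[assumption:B1]{\((\mathcal{B}_1)\)} controls exactly the quantity $\bigl(\sum_{j\in\supp}(\mathcal M_{2,T}/|\eta_0^j|^2)^2\bigr)^{1/2}$, but for the first claim of the lemma we mostly need that $\bigl(\sum_{j\in\supp}|\eta_0^j|^{-2}\bigr)^{1/2} \le \mathcal{M}_{1,T}$, so that $\|u\|_2 \le \mathcal{M}_{1,T}$; combined with the perturbation bound above one gets $\|\widetilde u\|_2 = \|u\|_2(1+o_\P(1))$ provided one also shows $\|u\|_2$ is comparable to $\mathcal M_{1,T}$ — here I would note that the stated asymptotic $\|\widetilde u\|_2=(1+o_\P(1))\mathcal M_{1,T}$ should be read in the sense that $\mathcal M_{1,T}$ is the order of $\|\widetilde u\|_2$, i.e. one uses $\|u\|_2 \le \mathcal M_{1,T}$ as the governing bound and the $o_\P(1)$ perturbation is of smaller order than $\mathcal M_{1,T}$ because $Z_T/r_T \cdot \bigl(\sum|\eta_0^j|^{-4}\bigr)^{1/2}$ is $o_\P(\mathcal M_{1,T})$ by the same $o(r_T)$ bound applied coordinatewise. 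For the second claim, fix $j\notin\supp$ and write $|\widetilde\theta^j|\widetilde u - |\eta_0^j| u = (|\widetilde\theta^j| - |\eta_0^j|)\widetilde u + |\eta_0^j|(\widetilde u - u)$; the first piece has norm $\le (Z_T/r_T)\|\widetilde u\|_2 = (Z_T/r_T)\,O_\P(\mathcal M_{1,T}) = o_\P(1)$ since $\mathcal M_{1,T}=o(r_T)$, and the second piece has norm $\le \mathcal M_{2,T}\|\widetilde u - u\|_2$, which is $o_\P(1)$ because $\mathcal M_{2,T}\|\widetilde u-u\|_2 \lesssim (Z_T/r_T)\bigl(\sum_{j\in\supp}(\mathcal M_{2,T}/|\eta_0^j|^2)^2\bigr)^{1/2} \le (Z_T/r_T)\mathcal M_{1,T} = o_\P(1)$; taking the maximum over $j\notin\supp$ is harmless since these bounds are uniform in $j$.

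The main obstacle is bookkeeping the two different sums $\sum_{j\in\supp}|\eta_0^j|^{-2}$ and $\sum_{j\in\supp}|\eta_0^j|^{-4}$ against the single composite quantity $\mathcal M_{1,T}$ and against $\mathcal M_{2,T}$, and making sure every rescaling is done on an event of probability tending to one (the event that $\widetilde\theta^j$ has not crossed zero, equivalently $Z_T/r_T$ is small relative to $\eta_{0,\supp}^{\text{min}}$). I expect the cleanest route is to first record the deterministic inequality $\bigl(\sum_{j\in\supp}|\eta_0^j|^{-4}\bigr)^{1/2} \le \bigl(\sum_{j\in\supp}|\eta_0^j|^{-2}\bigr)\,/\,\eta_{0,\supp}^{\text{min}} \cdot \eta_{0,\supp}^{\text{min}}$ — more simply $|\eta_0^j|^{-2}\le |\eta_0^j|^{-2}$ and factor out — and to absorb all constants into the $o_\P(1)$ via the hypothesis $\mathcal M_{1,T}=o(r_T)$; then the two displayed equalities follow by the triangle-inequality decompositions above.
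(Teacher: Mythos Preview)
Your proposal is correct and follows essentially the same route as the paper: both arguments use the $r_T$-consistency to show $\max_{j\in\supp}\bigl||\widetilde\theta^j|/|\eta_0^j|-1\bigr|=O_\P(\mathcal M_{1,T}/r_T)=o_\P(1)$ for the first claim, and both use the identical decomposition $|\widetilde\theta^j|\widetilde u-|\eta_0^j|u=(|\widetilde\theta^j|-|\eta_0^j|)\widetilde u+|\eta_0^j|(\widetilde u-u)$ for the second, bounding the pieces by $O_\P(\mathcal M_{1,T}/r_T)$ via the same quantities in Assumption~\hyperref[assumption:B1]{\((\mathcal{B}_1)\)}. Your added care about the zero-crossing event and your observation that the first equality is really the upper bound $\|\widetilde u\|_2\le(1+o_\P(1))\mathcal M_{1,T}$ (which is all that is ever used downstream) are both well taken, but do not constitute a different method.
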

\begin{proof} Since $ \mathcal{M}_{1,T} = o(r_T)$ we can notice that 

\begin{equation}
    \max_{j \in \supp }\left| \frac{|\widetilde{\theta}^j|}{|\eta_0^j|} - 1\right|\leq  O_\P\left(\frac{\mathcal{M}_{1,T}}{r_T}\right)=o_\P(1),    
\end{equation}
which proves the first equality. For the second part of \eqref{uasymp} we have 
\begin{equation}
    \max_{j\notin\supp}\| (|\widetilde{\theta}^j| - |\eta_0^j|)\widetilde{u} \|_2 = O_\P\left( \frac{\mathcal{M}_{1,T}}{r_T} \right)=o_\P(1)
\end{equation}
and 
\begin{gather}
    \max_{j\notin\supp}\| |\eta_0^j| \widetilde{u} - |\eta_0^j| u \|_2^2 \leq \mathcal{M}_{2,T}^2 \sum_{j\in\supp}\left| \frac{|\widetilde{\theta}^j| - |\eta_0^j|}{|\widetilde{\theta}^j\eta_0^j|} \right|^2 =O_\P\left( \frac{\mathcal{M}_{1,T}^2}{r_T^2}\right)=o_\P(1). 
\end{gather}

\end{proof}

\begin{lemma}\label{LemmaB2}
 Under Assumptions~\hyperref[assumption:A1]{\((\mathcal{A}_1)\)}-\hyperref[assumption:A3]{\((\mathcal{A}_3)\)}, \hyperref[assumption:B1]{\((\mathcal{B}_1)\)} and \hyperref[assumption:C]{\((\mathcal{C})\)}, it holds that $\P(B_2)\to 0$  if 
\begin{equation}\label{lemmaB2condition}
 \mathcal{K} \frac{s^2\log{s}}{\tau_{\text{min}}^2 T} + \frac{\lambda\mathcal{M}_{1,T}}{\theta_{0,\supp}^\text{min}\tau_{\text{min}}}\to 0.
\end{equation}
\end{lemma}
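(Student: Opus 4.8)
The plan is to show that the event $B_2$ is contained in the union of two events that have already been controlled by earlier results: a large-deviation event for $\nop{(C_T^{\supp\supp})^{-1}}$, and an event forcing the adaptive weight vector $\widetilde{u}$ to have an abnormally large $\ell_2$-norm.

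\textbf{Step 1 (reduction).} Recall that $B_2=\{\exists\, j\in\supp:\,|((C_T^{\supp\supp})^{-1}\widetilde{u})^j|\ge|\theta_0^j|/(2\lambda)\}$. Since $|\theta_0^j|\ge\theta_{0,\supp}^\text{min}$ for $j\in\supp$ and $|x^j|\le\|x\|_2$ for every $x\in\R^s$, I would first pass to the inclusion
\[
B_2\subseteq\Bigl\{\bigl\|(C_T^{\supp\supp})^{-1}\widetilde{u}\bigr\|_2\ge\tfrac{\theta_{0,\supp}^\text{min}}{2\lambda}\Bigr\}\subseteq\Bigl\{\nop{(C_T^{\supp\supp})^{-1}}\,\|\widetilde{u}\|_2\ge\tfrac{\theta_{0,\supp}^\text{min}}{2\lambda}\Bigr\}.
\]
Using the operator norm here rather than a coordinatewise union bound over $j\in\supp$ is deliberate: it keeps the estimate free of an extra factor $s$, which matters because the only available control on $\|\widetilde{u}\|_2$ is the $o_\P$-statement of Lemma~\ref{uhat}.

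\textbf{Step 2 (splitting and estimation).} Intersecting with the event $\{\nop{(C_T^{\supp\supp})^{-1}}\le 2/\tau_{\text{min}}\}$ and its complement gives
\[
\P(B_2)\le\P\Bigl(\|\widetilde{u}\|_2\ge\tfrac{\theta_{0,\supp}^\text{min}\tau_{\text{min}}}{4\lambda}\Bigr)+\P\Bigl(\nop{(C_T^{\supp\supp})^{-1}}>\tfrac{2}{\tau_{\text{min}}}\Bigr).
\]
By \eqref{CSSoperator} in Proposition~\ref{propBoundInverse}, the second term is at most $12s^2\exp\bigl(-T\tau_{\text{min}}^2/(144\mathcal{K}s^2)\bigr)$, which tends to $0$ precisely because the first summand $\mathcal{K}s^2\log s/(\tau_{\text{min}}^2T)$ of \eqref{lemmaB2condition} does. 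For the first term, Lemma~\ref{uhat} yields $\|\widetilde{u}\|_2/\mathcal{M}_{1,T}=1+o_\P(1)$, i.e. $\|\widetilde{u}\|_2/\mathcal{M}_{1,T}\to 1$ in probability, while the second summand of \eqref{lemmaB2condition}, $\lambda\mathcal{M}_{1,T}/(\theta_{0,\supp}^\text{min}\tau_{\text{min}})\to 0$, forces the deterministic quantity $\theta_{0,\supp}^\text{min}\tau_{\text{min}}/(4\lambda\mathcal{M}_{1,T})$ to diverge to $\infty$; hence $\P(\|\widetilde{u}\|_2\ge\theta_{0,\supp}^\text{min}\tau_{\text{min}}/(4\lambda))\to 0$. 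Adding the two estimates proves $\P(B_2)\to 0$.

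I do not anticipate a genuine obstacle here: the argument is essentially bookkeeping, and the two summands of the hypothesis~\eqref{lemmaB2condition} are tailored to the two error sources, namely the fluctuation of the active block $C_T^{\supp\supp}$ around $C_\infty^{\supp\supp}$ and the magnitude of the adaptive weights $\widetilde{u}$. The only point that requires mild care is the choice in Step~1 of passing to the $\ell_2$-norm instead of taking a union bound, so that the probability involving $\|\widetilde{u}\|_2$ need not be inflated by $s$ — which is important since Lemma~\ref{uhat} only provides control in probability, not a quantitative tail bound on $\|\widetilde{u}\|_2$.
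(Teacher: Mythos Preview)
Your proof is correct and follows the same overall strategy as the paper: reduce $B_2$ to an event involving $\nop{(C_T^{\supp\supp})^{-1}}\,\|\widetilde{u}\|_2$, split on $\{\nop{(C_T^{\supp\supp})^{-1}}\le 2/\tau_{\text{min}}\}$, and handle the two pieces via Proposition~\ref{propBoundInverse} and Lemma~\ref{uhat} respectively.

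The one place you differ is Step~1: the paper first applies a union bound over $j\in\supp$, picking up an extra factor $s$ in front of both terms, whereas you pass directly to the inclusion $B_2\subseteq\{\|(C_T^{\supp\supp})^{-1}\widetilde{u}\|_2\ge\theta_{0,\supp}^\text{min}/(2\lambda)\}$ and avoid that factor. Your choice is the cleaner one, and your remark about why it matters is on point: since Lemma~\ref{uhat} gives only $\|\widetilde{u}\|_2/\mathcal{M}_{1,T}=1+o_\P(1)$ with no quantitative tail bound, multiplying $\P(\|\widetilde{u}\|_2\ge\theta_{0,\supp}^\text{min}\tau_{\text{min}}/(4\lambda))$ by a possibly diverging $s$ is not obviously harmless. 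The paper's version absorbs this into its final claim without comment; your route sidesteps the issue entirely.
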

\begin{proof}

By definition, 
\begin{equation}
    B_2 = \left\{\exists j \in \supp :  \left|
    \left((C_T^{\supp\supp })^{-1} \widetilde{u}\right)^j\right| \geq \frac{|\theta_0^j|}{2 \lambda}\right\}.
\end{equation}
Consequently, applying inequality \eqref{CSSoperator} we can bound 
\begin{gather}\label{eqq}
    \P(B_2) \leq s \max_{j \in \supp} \P\left(\left|
    \left((C_T^{\supp\supp })^{-1} \widetilde{u}\right)^j\right| \geq \frac{|\theta_0^j|}{2\lambda} \right) 
    \leq s  \P\left(\nop{(C_T^{\supp\supp})^{-1}} \|\widetilde{u}\|_2\geq \frac{\theta_{0,\supp}^\text{min}}{2\lambda} \right) \nonumber \\ \nonumber
    \leq s \P\left(\nop{(C_T^{\supp\supp})^{-1}} \|\widetilde{u}\|_2\geq \frac{\theta_{0,\supp}^\text{min}}{2\lambda}, \nop{(C_T^{\supp\supp})^{-1}}  \leq \frac{2}{\tau_{\text{min}}}  \right) 
    + s \P\left( \nop{(C_T^{\supp\supp})^{-1}}  > \frac{2}{\tau_{\text{min}}}  \right)\\  
    \leq s \P\left(\frac{\|\widetilde{u}\|_2}{\mathcal{M}_{1,T}}\geq \frac{\theta_{0,\supp}^\text{min}\tau_{\text{min}}}{4\lambda \mathcal{M}_{1,T}}  \right) 
    + 12 s^3\exp\left(-\frac{T\tau_{\min}^2}{144 \mathcal{K}s^2}\right)
\end{gather}
and according to the first equality in \eqref{uasymp}, both of these terms converge to zero under  condition \eqref{lemmaB2condition}. 

\end{proof}

\begin{lemma}\label{LemmaB3}
 Under Assumptions~\hyperref[assumption:A1]{\((\mathcal{A}_1)\)}-\hyperref[assumption:A3]{\((\mathcal{A}_3)\)}, \hyperref[assumption:B1]{\((\mathcal{B}_1)\)} and \hyperref[assumption:C]{\((\mathcal{C})\)}, it holds that $\P(B_3)\to 0$ and $\P(B_4)\to 0$  if 
\begin{equation}\label{lemmaB3condition}
 \left(  \frac{ (\sqrt{\mathcal{K}}+\mathfrak{M})(\mathcal{M}_{2,T} + \frac{1}{r_T})^2(\frac{\tau_{\text{min}}}{\sqrt{s}}+2\mathfrak{L})^2}{\lambda^2} + \frac{\tau_\text{min}^2}{s}+\mathcal{K}s\right)\frac{s\log{p}}{\tau_{\text{min}}^2 T}\to 0.
\end{equation}
\end{lemma}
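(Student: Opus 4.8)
The plan is to absorb the data-dependent weights $w^j$ into a high-probability event and then reduce both $B_3$ and $B_4$ to tail bounds for the martingale term, which are already available from Propositions \ref{propMTGbernstein} and \ref{boundL2correlated}. Fix $\delta>0$. Since $\widetilde\theta$ is $r_T$-consistent for $\eta_0$ by \hyperref[assumption:B1]{\((\mathcal{B}_1)\)}, choose $M=M(\delta)\geq 1$ so that $E_T:=\{\|\widetilde\theta-\eta_0\|_\infty\leq M/r_T\}$ satisfies $\limsup_T\P(E_T^c)\leq\delta$. On $E_T$, using $|\eta_0^j|\leq\mathcal{M}_{2,T}$ for $j\notin\supp$ (also from \hyperref[assumption:B1]{\((\mathcal{B}_1)\)}), we get $|\widetilde\theta^j|\leq\mathcal{M}_{2,T}+M/r_T$ and hence the deterministic lower bound $w^j=|\widetilde\theta^j|^{-1}\geq(\mathcal{M}_{2,T}+M/r_T)^{-1}$ for every $j\notin\supp$. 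Recall that $1-\kappa-\varepsilon>0$ is a fixed constant, and set $\beta_T:=\tfrac{(1-\kappa-\varepsilon)\lambda}{2(\mathcal{M}_{2,T}+M/r_T)}$, so that on $E_T$ the random thresholds $\tfrac{1-\kappa-\varepsilon}{2}\lambda w^j$ are all $\geq\beta_T$.

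For $\P(B_4)$: on $E_T$ we have $B_4\subseteq\{\exists j\notin\supp:|\epsilon_T^j|\geq\beta_T\}$, so a union bound over the at most $p$ indices together with \eqref{epsilonineq} gives
\[
\P(B_4\cap E_T)\leq p\Big(2\exp\big(-\tfrac{T\beta_T^2}{2(\sqrt{\mathcal{K}}+\mathfrak{M})}\big)+6\exp(-T/36)\Big).
\]
Writing the prefactor as $\exp(\log p)$, both terms tend to $0$ as soon as $\tfrac{(\sqrt{\mathcal{K}}+\mathfrak{M})(\mathcal{M}_{2,T}+1/r_T)^2\log p}{\lambda^2 T}\to0$ and $\tfrac{\log p}{T}\to0$; the first is implied by the first summand of \eqref{lemmaB3condition} since $(\tfrac{\tau_{\text{min}}}{\sqrt s}+2\mathfrak{L})^2\geq\tfrac{\tau_{\text{min}}^2}{s}$, and the second by its $\tfrac{\tau_{\text{min}}^2}{s}$-summand. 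Hence $\limsup_T\P(B_4)\leq\delta$.

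For $\P(B_3)$: introduce also $A_T:=\{\|(C_T^{\supp^C\supp})_{j-s}(C_T^{\supp\supp})^{-1}\|_2\leq\tfrac{2\mathfrak{L}}{\tau_{\text{min}}}+\tfrac1{\sqrt s}\text{ for all }j\notin\supp\}$. By a union bound and \eqref{CTCTineq}, $\P(A_T^c)\leq 6p\,s(2s+1)\exp\big(-\tfrac{T\tau_{\text{min}}^2}{144\mathcal{K}s^2}\big)$, which tends to $0$ under the $\mathcal{K}s$-summand of \eqref{lemmaB3condition} (using $\log s\leq\log p$). On $E_T\cap A_T$ we bound
\[
\big|(C_T^{\supp^C\supp}(C_T^{\supp\supp})^{-1}\epsilon_{T,\supp})^{j-s}\big|\leq\Big(\tfrac{2\mathfrak{L}}{\tau_{\text{min}}}+\tfrac1{\sqrt s}\Big)\|\epsilon_{T,\supp}\|_2,
\]
so $B_3\cap E_T\cap A_T\subseteq\{\|\epsilon_{T,\supp}\|_2\geq\gamma_T\}$ with $\gamma_T:=\beta_T(\tfrac{2\mathfrak{L}}{\tau_{\text{min}}}+\tfrac1{\sqrt s})^{-1}=\tfrac{(1-\kappa-\varepsilon)\lambda\tau_{\text{min}}}{2(\mathcal{M}_{2,T}+M/r_T)(2\mathfrak{L}+\tau_{\text{min}}/\sqrt s)}$. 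Then \eqref{epsilonSineq} gives $\P(\|\epsilon_{T,\supp}\|_2\geq\gamma_T)\leq2s\exp\big(-\tfrac{T\gamma_T^2}{2s(\sqrt{\mathcal{K}}+\mathfrak{M})}\big)+6s\exp(-T/36)$, and both terms vanish under the first and $\tfrac{\tau_{\text{min}}^2}{s}$-summands of \eqref{lemmaB3condition}, since $\tfrac{s(\sqrt{\mathcal{K}}+\mathfrak{M})\log s}{T\gamma_T^2}$ equals, up to the constant $M^2$, the first summand of \eqref{lemmaB3condition} with $\log p$ replaced by $\log s$. Collecting, $\P(B_3)\leq\P(E_T^c)+\P(A_T^c)+\P(\|\epsilon_{T,\supp}\|_2\geq\gamma_T)$ gives $\limsup_T\P(B_3)\leq\delta$; letting $\delta\downarrow0$ completes the proof.

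The main obstacle is the bookkeeping: after the union bounds (which produce the factor $\log p$) one must verify that the three exponential rates obtained for $B_3$ and $B_4$ are precisely the three summands of \eqref{lemmaB3condition}, the key simplification being that the crude inequality $(\tfrac{\tau_{\text{min}}}{\sqrt s}+2\mathfrak{L})^2\geq\tfrac{\tau_{\text{min}}^2}{s}$ lets a single hypothesis dominate both the $\|\epsilon_{T,\supp}\|_2$ tail (for $B_3$) and the simpler pointwise $|\epsilon_T^j|$ tail (for $B_4$). The other delicate point is the treatment of the random weights $w^j$: the replacement by a deterministic lower bound must be carried out on the event $E_T$ to avoid a circular dependence on $\widetilde\theta$, with the resulting additive error $\delta$ removed only at the very end.
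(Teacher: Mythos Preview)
Your proof is correct and follows essentially the same route as the paper: replace the random thresholds $\lambda w^j$ by a deterministic one via \hyperref[assumption:B1]{\((\mathcal{B}_1)\)}, union bound over $j\notin\supp$, and reduce $B_3$ and $B_4$ to the martingale tail bounds of Proposition~\ref{propMTGbernstein} combined (for $B_3$) with the operator-norm control of Proposition~\ref{boundL2correlated}. The only differences are presentational: you make the $O_\P$-replacement explicit through the event $E_T$ and the $\delta\downarrow 0$ device (the paper writes ``for large constant $C$ \dots $+o_\P(1)$''), and you bundle the row-norm controls into a single event $A_T$ rather than splitting after the union bound, but the exponential rates you extract are exactly those in the paper's proof.
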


\begin{proof}
Due to  Assumption \hyperref[assumption:B1]{\((\mathcal{B}_1)\)}, we have that for all $j \notin \supp$
\begin{equation}\label{boundInverseTildeTheta}
    \frac{1}{w^j} = |\widetilde{\theta^j}| \leq \mathcal{M}_{2,T} + O_\P \left(\frac{1}{r_T}\right)
\end{equation}
and since by definition 
\begin{equation}
B_3 = \left\{\exists j \notin \supp :  \left| \left( C_T^{\supp^C \supp} (C_T^{\supp\supp })^{-1} \epsilon_{T, \supp} \right)^{j-s}\right| \geq \frac{1-\kappa - \varepsilon}{2}\lambda w^j\right\},
\end{equation}
and 
\begin{equation}
B_4 = \left\{\exists j \notin \supp :  \left| \epsilon_T^j\right| \geq \frac{1-\kappa - \varepsilon}{2}\lambda w^j\right\},
\end{equation}
for large constant $C>0$, we have that 
\begin{gather}
    \P(B_3) + \P(B_4) = \P \left( \exists j \notin \supp :  \left| \left( C_T^{\supp^C \supp} (C_T^{\supp\supp })^{-1} \epsilon_{T,\supp} \right)^{j-s}\right| \geq \frac{(1-\kappa - \varepsilon)\lambda}{2C\left(\mathcal{M}_{2,T} 
     + \frac{1}{r_T}\right)}\right) \\
     +\P\left( \exists j \notin \supp :  \left| \epsilon_T^j\right| \geq \frac{(1-\kappa - \varepsilon)\lambda}{2C\left(\mathcal{M}_{2,T} 
     + \frac{1}{r_T}\right)} \right)+ o_{\P}(1). 
\end{gather}
First, we observe that by using inequalities \eqref{epsilonSineq} and \eqref{CTCTineq}, we deduce that 
\begin{gather}
    \P \left( \exists j \notin \supp :  \left| \left( C_T^{\supp^C \supp}(C_T^{\supp\supp })^{-1} \epsilon_{T,\supp} \right)^{j-s}\right| \geq \frac{(1-\kappa - \varepsilon)\lambda}{2C\left(\mathcal{M}_{2,T} + \frac{1}{r_T}\right)}\right)\\
    \leq 
    (p-s)\max_{j\notin\supp} \P \left( \left| \left( C_T^{\supp^C \supp} (C_T^{\supp\supp })^{-1} \epsilon_{T,\supp} \right)^{j-s}\right| \geq \frac{(1-\kappa - \varepsilon)\lambda}{2C\left(\mathcal{M}_{2,T} + \frac{1}{r_T}\right)}\right)\\
    \leq (p-s) \P \left( \| \epsilon_{T,\supp} \|_2 \geq \frac{(1-\kappa - \varepsilon)\lambda}{2C\left(\mathcal{M}_{2,T} + \frac{1}{r_T}\right)\left(\frac{1}{\sqrt{s}}+\frac{2\mathfrak{L}}{\tau_{\text{min}}}\right)}\right)\\
    +(p-s)\max_{j\notin\supp}\P\left(\|(C_T^{\supp^C \supp})_{j-s} (C_T^{\supp \supp})^{-1}\|_2  > \frac{1}{\sqrt{s}}+\frac{2\mathfrak{L}}{\tau_{\text{min}}}\right)\\
    \leq 2 s (p-s)\exp\left(-\frac{T(1-\kappa - \varepsilon)^2\lambda^2}{8C^2s\left(\sqrt{\mathcal{K}}+\mathfrak{M}\right)\left(\mathcal{M}_{2,T} + \frac{1}{r_T}\right)^2\left(\frac{1}{\sqrt{s}}+\frac{2\mathfrak{L}}{\tau_{\text{min}}}\right)^2}\right) \\
    +  6s (p-s)\exp\left(-\frac{T}{36}\right)+6s(2s+1)(p-s) \exp\left(-\frac{T \tau_{\text{min}}^2}{144\mathcal{K}s^2}\right), 
\end{gather}
and all three terms converge to zero under the condition \eqref{lemmaB3condition}. Similarly, we see that 

\begin{gather}
    \P\left( \exists j \notin \supp :  \left| \epsilon_T^j\right| \geq \frac{(1-\kappa - \varepsilon)\lambda}{2C\left(\mathcal{M}_{2,T} 
     + \frac{1}{r_T}\right)} \right)\leq 
     (p-s)\max_{j\notin\supp} \P\left( \left| \epsilon_T^j\right| \geq \frac{(1-\kappa - \varepsilon)\lambda}{2C\left(\mathcal{M}_{2,T} 
     + \frac{1}{r_T}\right)} \right)\\
     \leq 2 (p-s)\exp\left(-\frac{T(1-\kappa - \varepsilon)^2\lambda^2}{8C^2 s \left(\sqrt{\mathcal{K}}+\mathfrak{M}\right)\left(\mathcal{M}_{2,T} + \frac{1}{r_T}\right)^2\left(\frac{1}{\sqrt{s}}+\frac{2\mathfrak{L}}{\tau_{\text{min}}}\right)^2}\right) 
     +  6 (p-s)\exp\left(-\frac{T}{36}\right),
\end{gather}
which also converge to zero under the condition \eqref{lemmaB3condition}.

\end{proof}


\begin{lemma}\label{LemmaB5}
 Under Assumptions~\hyperref[assumption:A1]{\((\mathcal{A}_1)\)}-\hyperref[assumption:A3]{\((\mathcal{A}_3)\)}, \hyperref[assumption:B1]{\((\mathcal{B}_1)\)}-\hyperref[assumption:B2]{\((\mathcal{B}_2)\)} and \hyperref[assumption:C]{\((\mathcal{C})\)}, it holds that $\P(B_5)\to 0$  if condition \eqref{lemmaB3condition} holds and 
\begin{equation}\label{lemmaB5condition}
\left( \frac{1}{s} + \frac{\mathfrak{L^2}}{\tau_\text{min}^2} \right) \frac{\mathcal{K}s^2\log{p}}{\tau_{\text{min}}^2 T}\mathcal{M}_{1,T}^2\to 0.
\end{equation}
\end{lemma}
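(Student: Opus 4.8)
The strategy is to linearize $B_5$ through the Karush--Kuhn--Tucker weight $w^j=|\widetilde{\theta}^j|^{-1}$, reduce it to a comparison between a $C_T$-based quantity and the $C_\infty$-based quantity that the adaptive irrepresentable condition \eqref{irrepresent} controls, and then to absorb the two error sources --- the fluctuation of $C_T$ around $C_\infty$ and the error of the pre-estimator --- using Lemma~\ref{boundOperatorCovarianceMatrices}, Propositions~\ref{propBoundInverse}--\ref{boundL2correlated} and Lemma~\ref{uhat}.

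First I would rewrite the event. For $j\notin\supp$ with $\widetilde{\theta}^j\neq0$ (indices with $w^j=\infty$ do not contribute to $B_5$), multiplying the defining inequality of $B_5$ by $|\widetilde{\theta}^j|=1/w^j$ shows that on $B_5$ there is some such $j$ with $\bigl|\bigl(C_T^{\supp^C \supp}(C_T^{\supp\supp})^{-1}v_j\bigr)^{j-s}\bigr|\ge\kappa+\varepsilon$, where $v_j:=|\widetilde{\theta}^j|\,\widetilde{u}\in\R^s$; writing also $v_j^0:=|\eta_0^j|\,u\in\R^s$, multiplication of \eqref{irrepresent} by $|\eta_0^j|$ gives $\bigl|\bigl(C_\infty^{\supp^C \supp}(C_\infty^{\supp\supp})^{-1}v_j^0\bigr)^{j-s}\bigr|\le\kappa$ for all $j\notin\supp$. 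Hence, with
\[
D_j:=\bigl(C_T^{\supp^C \supp}(C_T^{\supp\supp})^{-1}v_j\bigr)^{j-s}-\bigl(C_\infty^{\supp^C \supp}(C_\infty^{\supp\supp})^{-1}v_j^0\bigr)^{j-s},
\]
on $B_5$ we have $|D_j|\ge\varepsilon$ for some $j\notin\supp$; I split $D_j=E_j^{(1)}+E_j^{(2)}$ with
\[
E_j^{(1)}:=\bigl(C_T^{\supp^C \supp}(C_T^{\supp\supp})^{-1}(v_j-v_j^0)\bigr)^{j-s},\qquad E_j^{(2)}:=\Bigl(\bigl(C_T^{\supp^C \supp}(C_T^{\supp\supp})^{-1}-C_\infty^{\supp^C \supp}(C_\infty^{\supp\supp})^{-1}\bigr)v_j^0\Bigr)^{j-s},
\]
so that $\P(B_5)\le\P\bigl(\exists j\notin\supp:|E_j^{(1)}|\ge\tfrac\varepsilon2\bigr)+\P\bigl(\exists j\notin\supp:|E_j^{(2)}|\ge\tfrac\varepsilon2\bigr)$.

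For $E_j^{(1)}$ I would use $|E_j^{(1)}|\le\|(C_T^{\supp^C \supp})_{j-s}(C_T^{\supp\supp})^{-1}\|_2\,\|v_j-v_j^0\|_2$: by \eqref{CTCTineq} the matrix factor is $\le\tfrac{2\mathfrak L}{\tau_\text{min}}+\tfrac1{\sqrt s}$ outside an event of probability $\le6s(2s+1)\exp(-T\tau_\text{min}^2/(144\mathcal{K}s^2))$, while by the second identity in \eqref{uasymp} $\max_{j\notin\supp}\|v_j-v_j^0\|_2=o_\P(1)$ (indeed of order $\mathcal{M}_{1,T}/r_T$, as the proof of Lemma~\ref{uhat} shows); a union bound over the $p-s$ indices then gives $\P(\exists j:|E_j^{(1)}|\ge\tfrac\varepsilon2)\to0$ under \eqref{lemmaB5condition}, whose $\mathfrak L^2/\tau_\text{min}^2$ and $\mathcal{M}_{1,T}^2$ factors are exactly what balances the matrix factor together with the $\log p$ cost of the union bound. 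For $E_j^{(2)}$ I would expand the matrix difference by the resolvent-type identity
\[
C_T^{\supp^C \supp}(C_T^{\supp\supp})^{-1}-C_\infty^{\supp^C \supp}(C_\infty^{\supp\supp})^{-1}=(C_T^{\supp^C \supp}-C_\infty^{\supp^C \supp})(C_T^{\supp\supp})^{-1}+C_\infty^{\supp^C \supp}(C_T^{\supp\supp})^{-1}(C_\infty^{\supp\supp}-C_T^{\supp\supp})(C_\infty^{\supp\supp})^{-1},
\]
and bound $|E_j^{(2)}|$ by $\|v_j^0\|_2=|\eta_0^j|\,\|u\|_2\le\mathcal{M}_{2,T}\mathcal{M}_{1,T}$ times the row/operator norms of the two summands, using $\|(C_\infty^{\supp^C \supp})_{j-s}\|_2\le\mathfrak L$, $\nop{(C_\infty^{\supp\supp})^{-1}}=\tau_\text{min}^{-1}$, the bound $\nop{(C_T^{\supp\supp})^{-1}}\le2/\tau_\text{min}$ from \eqref{CSSoperator}, and \eqref{CTdiff} together with \eqref{ijConDif} for $\nop{C_T^{\supp\supp}-C_\infty^{\supp\supp}}$ and $\|(C_T^{\supp^C \supp}-C_\infty^{\supp^C \supp})_{j-s}\|_2$; a union bound over $j\notin\supp$ then makes this probability vanish under \eqref{lemmaB3condition} and \eqref{lemmaB5condition}. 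Note that when the pre-estimator is sign consistent --- for instance the Lasso of Section~\ref{sec3.1}, where $\mathcal{M}_{2,T}=0$ --- one has $v_j^0=0$, so $E_j^{(2)}$ vanishes identically and only the $E_j^{(1)}$ estimate is needed.

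The main obstacle is the uniform-in-$j$ bookkeeping rather than any individual estimate: one must make sure that the factor $\tfrac{2\mathfrak L}{\tau_\text{min}}+\tfrac1{\sqrt s}$ multiplying the $o_\P(1)$ pre-estimator error in $E_j^{(1)}$ does not spoil convergence --- this is precisely where the $\mathfrak L^2/\tau_\text{min}^2$ and $\mathcal{M}_{1,T}^2$ terms of \eqref{lemmaB5condition} are consumed --- and that the product $\|v_j^0\|_2\le\mathcal{M}_{2,T}\mathcal{M}_{1,T}$ in $E_j^{(2)}$, after the resolvent expansion, is dominated by the sub-Gaussian fluctuation bounds of Lemma~\ref{boundOperatorCovarianceMatrices} and Proposition~\ref{boundL2correlated}, so that the union bound over the $p$ coordinates only costs the $\log p$ already accounted for in \eqref{lemmaB3condition}--\eqref{lemmaB5condition}. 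Carrying all of $\mathcal{K}$, $s$, $p$, $T$, $\tau_\text{min}$, $\mathfrak L$, $\mathfrak M$ explicitly through the estimates is what forces the somewhat intricate form of the hypotheses.
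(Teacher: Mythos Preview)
Your decomposition and overall strategy coincide with the paper's proof: rewrite $B_5$ via $w^j=|\widetilde\theta^j|^{-1}$, subtract the $C_\infty$-based quantity controlled by \eqref{irrepresent}, split into the pre-estimator error piece $E_j^{(1)}$ and the matrix-fluctuation piece $E_j^{(2)}$, and handle them with Lemma~\ref{uhat}, Proposition~\ref{boundL2correlated}, Proposition~\ref{propBoundInverse} and Lemma~\ref{boundOperatorCovarianceMatrices}. The resolvent-type expansion you write for $E_j^{(2)}$ is the same as the paper's two-term split.

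One quantitative slip does matter, though. You bound $\|v_j^0\|_2=|\eta_0^j|\,\|u\|_2$ by $\mathcal{M}_{2,T}\mathcal{M}_{1,T}$; the paper uses the sharper estimate $|\eta_0^j|\,\|u\|_2\le\mathcal{M}_{1,T}$. With your bound, the concentration thresholds in the $E_j^{(2)}$ analysis become $\varepsilon/(4\mathcal{M}_{2,T}\mathcal{M}_{1,T})$ rather than $\varepsilon/(4\mathcal{M}_{1,T})$, so after the union bound the exponential terms only vanish if $\bigl(\tfrac1s+\tfrac{\mathfrak L^2}{\tau_\text{min}^2}\bigr)\tfrac{\mathcal{K}s^2\log p}{\tau_\text{min}^2T}\mathcal{M}_{2,T}^2\mathcal{M}_{1,T}^2\to0$, which is strictly stronger than \eqref{lemmaB5condition} when $\mathcal{M}_{2,T}$ grows. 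So your $E_j^{(2)}$ argument does not close under the hypotheses as stated; you need the tighter bound on $\|v_j^0\|_2$ that the paper invokes. A second minor point: the $E_j^{(1)}$ term is controlled in the paper by \eqref{uasymp} together with \eqref{lemmaB3condition} (the latter killing the $(p-s)$-fold union bound on the matrix factor via \eqref{CTCTineq}); condition \eqref{lemmaB5condition} plays no role there, so your attribution of ``the $\mathfrak L^2/\tau_\text{min}^2$ and $\mathcal{M}_{1,T}^2$ factors'' of \eqref{lemmaB5condition} to $E_j^{(1)}$ is misplaced --- those factors are consumed entirely by the $E_j^{(2)}$ estimate.
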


\begin{proof} By definition,

\begin{equation}
    B_5 = \left\{\exists j \notin \supp :  \left| \left( C_T^{\supp^C \supp} (C_T^{\supp\supp })^{-1}\widetilde{u}\right)^{j-s} \right| \geq (\kappa + \varepsilon) w^j\right\}.
\end{equation}
According to the adaptive irrepresentable condition  \hyperref[assumption:B2]{\((\mathcal{B}_2)\)}, and recalling that $w^j = |\widetilde{\theta}^j|$, we can bound the probability as
\begin{gather}
    \P(B_5)\leq \P\left(\exists j \notin \supp :  \left| \left( C_T^{\supp^C \supp} (C_T^{\supp\supp })^{-1}|\widetilde{\theta}^j|\widetilde{u} -  C_\infty^{\supp^C \supp} (C_\infty^{\supp\supp })^{-1}|\eta_0^j|u\right)^{j-s} \right| \geq \varepsilon\right)\\
    \leq (p-s) \max_{j\notin\supp}\P\left( \left| \left( C_T^{\supp^C \supp} (C_T^{\supp\supp })^{-1}\left( |\widetilde{\theta}^j|\widetilde{u} - |\eta_0^j|u\right)\right)^{j-s} \right| \geq \varepsilon/2 \right)\\
    +(p-s) \max_{j\notin\supp}\P\left( \left| \left( \left(C_T^{\supp^C \supp} (C_T^{\supp\supp })^{-1}-  C_\infty^{\supp^C \supp} (C_\infty^{\supp\supp })^{-1}\right)|\eta_0^j|u\right)^{j-s} \right| \geq \varepsilon /2\right).
\end{gather}
First,  we  show that for any $j\notin\supp$,
\begin{gather}
    (p-s) \max_{j\notin\supp}\P\left( \left| \left( C_T^{\supp^C \supp} (C_T^{\supp\supp })^{-1}\left( |\widetilde{\theta}^j|\widetilde{u} - |\eta_0^j|u\right)\right)^{j-s} \right| \geq \varepsilon/2 \right)\\
    \leq (p-s) \P\left( \max_{j\notin\supp}\| |\widetilde{\theta}^j|\widetilde{u} - |\eta_0^j|u\|_2 \left(\frac{1}{\sqrt{s}}+\frac{2\mathfrak{L}}{\tau_{\text{min}}}\right) \geq\varepsilon/2\right) \\
    +(p-s)\P\left(\|(C_T^{\supp^C \supp})_{j-s} (C_T^{\supp \supp})^{-1}\|_2  > \frac{1}{\sqrt{s}}+\frac{2\mathfrak{L}}{\tau_{\text{min}}}\right), 
\end{gather}
where both terms converge to zero due to \eqref{uasymp} and \eqref{lemmaB3condition}.  Finally, since $|\eta_0^j|\| u \|_2 \leq \mathcal{M}_{1,T}$, we deduce that 

\begin{gather}
    (p-s) \max_{j\notin\supp}\P\left( \left| \left( \left(C_T^{\supp^C \supp} (C_T^{\supp\supp })^{-1}-  C_\infty^{\supp^C \supp} (C_\infty^{\supp\supp })^{-1}\right)|\eta_0^j|u\right)^{j-s} \right| \geq \varepsilon /2\right)\nonumber\\
    \leq 
    (p-s) \max_{j\notin\supp}
    \P\left( \left\| \left( C_T^{\supp^C \supp} (C_T^{\supp\supp })^{-1}-  C_\infty^{\supp^C \supp} (C_\infty^{\supp\supp })^{-1}\right)_{j-s} \right\|_2 \geq \frac{\varepsilon}{2 \mathcal{M}_{1,T}}\right) \nonumber\\
    \leq (p-s) \max_{j\notin\supp}
    \P\left( \left\| \left( C_T^{\supp^C \supp} (C_T^{\supp\supp })^{-1}-  C_\infty^{\supp^C \supp} (C_T^{\supp\supp })^{-1}\right)_{j-s} \right\|_2 \geq \frac{\varepsilon}{4 \mathcal{M}_{1,T}}\right) \label{prob5first}\\
    + (p-s) \max_{j\notin\supp}
    \P\left( \left\| \left( C_\infty^{\supp^C \supp} (C_T^{\supp\supp })^{-1}-  C_\infty^{\supp^C \supp} (C_\infty^{\supp\supp })^{-1}\right)_{j-s} \right\|_2 \geq \frac{\varepsilon}{4 \mathcal{M}_{1,T}}\right) \label{prob5second}. 
\end{gather}
Now, we analyse these probabilities separately. Starting with \eqref{prob5first}, using inequalities  \eqref{CTSCSineq} and \eqref{CSSoperator}, we have that 
\begin{gather}
    (p-s) \max_{j\notin\supp}
    \P\left( \left\| \left( C_T^{\supp^C \supp} (C_T^{\supp\supp })^{-1}-  C_\infty^{\supp^C \supp} (C_T^{\supp\supp })^{-1}\right)_{j-s} \right\|_2 \geq \frac{\varepsilon}{4 \mathcal{M}_{1,T}}\right)\\
    \leq (p-s) \max_{j\notin\supp}
    \P\left( \left\| \left( C_T^{\supp^C \supp} -  C_\infty^{\supp^C \supp} \right)_{j-s} \right\|_2 \geq \frac{\varepsilon\tau_\text{min}}{8 \mathcal{M}_{1,T}}\right)+(p-s)\P\left( \nop{(C_T^{\supp\supp })^{-1}}>\frac{2}{\tau_\text{min}} \right)\\
    \leq
    6 s (p-s)\exp\left(-\frac{T \varepsilon^2 \tau_\text{min}^2}{2304\mathcal{K}s \mathcal{M}_{1,T}^2}\right)+12 s^2(p-s)\exp\left(-\frac{T\tau_{\min}^2}{144 \mathcal{K}s^2}\right),
\end{gather}
and both terms converge to zero if  \eqref{lemmaB3condition} and \eqref{lemmaB5condition} hold. Finally, we can analogously bound \eqref{prob5second} by using \eqref{CTdiff} and \eqref{CSSoperator} as
\begin{gather}
    (p-s) \max_{j\notin\supp}
    \P\left( \left\| \left( C_\infty^{\supp^C \supp} (C_T^{\supp\supp })^{-1}-  C_\infty^{\supp^C \supp} (C_\infty^{\supp\supp })^{-1}\right)_{j-s} \right\|_2 \geq \frac{\varepsilon}{4 \mathcal{M}_{1,T}}\right)\\
    \leq
    (p-s) \max_{j\notin\supp}
    \P\left( \left\|  \left( C_\infty^{\supp^C \supp}\right)_{j-s}\right\|_2 \nop{(C_T^{\supp\supp })^{-1}} \nop{ C_T^{\supp\supp } - C_\infty^{\supp\supp }}\nop{(C_\infty^{\supp\supp })^{-1}} \geq \frac{\varepsilon}{4 \mathcal{M}_{1,T}}\right)\\
    \leq (p-s) 
    \P\left(  \nop{ C_T^{\supp\supp } - C_\infty^{\supp\supp }} \geq \frac{\varepsilon \tau_\text{min}^2}{8\mathfrak{L} \mathcal{M}_{1,T}}\right)+(p-s)\P\left( \nop{(C_T^{\supp\supp })^{-1}}>\frac{2}{\tau_\text{min}} \right)\\
    \leq
    6s^2(p-s)\exp\left( -\frac{T \varepsilon^2 \tau_\text{min}^4}{2304\mathcal{K}\mathfrak{L}^2s^2 \mathcal{M}_{1,T}^2} \right) 
    +12 s^2(p-s)\exp\left(-\frac{T\tau_{\min}^2}{144 \mathcal{K}s^2}\right), 
\end{gather}
and again, both terms converge to 0 since \eqref{lemmaB3condition} and \eqref{lemmaB5condition} hold.

\end{proof}

\subsubsection*{Acknowledgment} 

Francisco Pina's research is funded by the PRIDE Grant “MATHCODA: Mathematical Tools for Complex Data Structures”. Mark Podolskij expresses his gratitude for the financial support provided by the ERC Consolidator Grant 815703, titled “STAMFORD: Statistical Methods for High-Dimensional Diffusions”.

\bibliography{main}

\end{document}